\newtheorem{conj}{Conjecture}
\newtheorem{lem}[conj]{Lemma}
\newtheorem{prop}[conj]{Proposition}
\newtheorem{thm}[conj]{Theorem}
\newtheorem{cor}[conj]{Corollary}
\theoremstyle{remark}
\newtheorem{rem}{Remark}
\newtheorem{ex}[conj]{Example}
\numberwithin{equation}{section}
\newcommand{\cat}{\mathrm{cat}}
\newcommand{\C}{\mathbb{C}}
\newcommand{\bi}{\mathbf{i}}
\newcommand{\hbi}{\hat{\bi}}
\newcommand{\ba}{\mathbf{a}}
\newcommand{\hba}{\hat{\ba}}
\newcommand{\bY}{\mathbb{Y}}
\newcommand{\ccat}{\mathrm{ccat}}
\newcommand{\charge}{\mathrm{charge}}
\newcommand{\CT}{\mathrm{CT}}
\newcommand{\dd}{{d^\bullet}}
\newcommand{\qH}{\mathcal{H}}
\newcommand{\heta}{{\hat{\eta}}}
\newcommand{\hmu}{{\hat{\mu}}}
\newcommand{\HL}[2]{{H}^{#1}_{#2}}
\newcommand{\K}[2]{{K}^{#1}_{#2}}
\newcommand{\Knuth}{\equiv}
\newcommand{\la}{\lambda}
\newcommand{\lad}{{\lambda^\bullet}}
\newcommand{\La}{\Lambda}
\newcommand{\LRtab}[2]{\mathrm{LR}^{#1}_{#2}}
\newcommand{\LRword}[1]{\mathrm{LR}_{#1}}
\newcommand{\LR}{\mathrm{LR}}
\newcommand{\M}[2]{M^{#1}_{#2}}
\newcommand{\mud}{{\mu^{\bullet}}}
\newcommand{\ov}{\mathrm{ov}}
\newcommand{\pair}[2]{\langle #1\,,\,#2\rangle}
\newcommand{\qHL}[2]{\mathcal{H}^{#1}_{#2}}
\newcommand{\qK}[2]{\mathcal{K}^{#1}_{#2}}
\newcommand{\qKred}[2]{\overline{\mathcal{K}}^{#1}_{#2}}
\newcommand{\row}{\mathrm{row}}
\newcommand{\sgn}{\mathrm{sgn}}
\newcommand{\shape}{\mathrm{shape}}
\newcommand{\tab}[2]{T^{#1}_{#2}}
\newcommand{\tSd}{\tilde{S}^\bullet}
\newcommand{\tQ}{t_{Q_1}}
\newcommand{\tW}{\tilde{W}}
\newcommand{\Yam}[1]{Y^{#1}}
\newcommand{\Sd}{S^\bullet}
\newcommand{\Td}{T^\bullet}
\newcommand{\word}{\mathrm{word}}
\newcommand{\Xd}{X^\bullet}
\newcommand{\Y}{\mathbb{Y}}
\newcommand{\Z}{\mathbb{Z}}
\newcommand{\tu}{\tilde{u}}
\newcommand{\tv}{\tilde{v}}
\newcommand{\tw}{\tilde{w}}
\newcommand{\tS}{\tilde{S}}
\newcommand{\tg}{\tilde{\gamma}}
\newcommand{\tU}{\tilde{U}}
\newcommand{\uu}{\underline{u}}
\newcommand{\tuu}{\underline{\tu}}
\newcommand{\RSK}{\mathrm{RSK}}
\newcommand{\dom}{\trianglerighteq}
\newcommand{\wt}{\mathrm{wt}}
\newcommand{\id}{\mathrm{id}}
\newcommand{\vn}{\varnothing}
\newcommand{\gd}{{\gamma^\bullet}}
\newcommand{\ds}{\dot{s}}
\newcommand{\ve}{\varepsilon}
\newcommand{\vp}{\varphi}
\newcommand{\tXi}{\tilde{\Xi}}
\newcommand{\Fr}{\mathrm{Frob}}
\newcommand{\tr}{\mathrm{Tr}}
\newcommand{\type}{\tau}
\newcommand{\Sym}{\mathrm{Sym}}
\newcommand{\Ind}{\mathrm{Ind}}
\newcommand{\tM}{\widetilde{M}}
\newcommand{\hp}{\hat{p}}
\newcommand{\Ga}{\Gamma}
\title{On cyclic quiver parabolic Kostka-Shoji polynomials}
\author{Daniel Orr}
\address{
	Department of Mathematics (MC 0123),
	460 McBryde Hall, Virginia Tech,
	225 Stanger St.,
	Blacksburg, VA 24061 USA}
\email{dorr@vt.edu}
\author{Mark Shimozono}
\address{
	Department of Mathematics (MC 0123),
	460 McBryde Hall, Virginia Tech,
	225 Stanger St.,
	Blacksburg, VA 24061 USA}
\email{mshimo@math.vt.edu}
\begin{document}
\maketitle

\begin{abstract} We obtain an explicit combinatorial formula for certain parabolic Kostka-Shoji polynomials associated with the cyclic quiver, generalizing results of Shoji and of Liu and Shoji.
\end{abstract}

\section*{Introduction}
In \cite{OS:newquiver} an analogue of parabolic Hall-Littlewood (HL) symmetric function was defined for general quivers. For the single loop quiver this recovers the parabolic Hall-Littlewood symmetric functions defined in \cite{SW} \cite{SZ}
which in turn generalize the classical modified HL functions denoted $Q'_\mu(X;t)$ in \cite{Mac}. For cyclic quivers this produces a modified (and parabolic) form of Shoji's Hall-Littlewood functions for the complex reflection group $G(r,1,n)$ \cite{Sho1,Sho2,Sho3} in the case of limit symbols. The parabolic Hall-Littlewood functions for general quivers encode the graded multiplicities in $GL$-equivariant Euler characteristics of vector bundles on Lusztig's convolution diagrams \cite{Lu}. Based on a higher vanishing conjecture of \cite{OS:newquiver}, certain quiver Hall-Littlewood functions are expected---and in some cases known by \cite{P}---to expand positively in the tensor Schur basis.

In this article we give a positive combinatorial formula for certain parabolic quiver Hall-Littlewood symmetric functions living on the cyclic quiver (Theorem~\ref{T:main}); the Schur positivity of these functions was not previously known by other means such as \cite{P}. Our formula expresses the Schur expansion coefficients of these functions, i.e., their quiver Kostka-Shoji polynomials, as a sum over certain multitableaux weighted by charge. This generalizes a result of \cite{LiuShoji} in the non-parabolic case of the cyclic quiver with two nodes. The latter was derived from results of \cite{AH} on the intersection cohomology of the enhance nilpotent cone. We give an independent, combinatorial proof of our formula.
For a single node we recover the graded character of tensor products of Kirillov-Reshetikhin (KR) modules for affine $\mathfrak{sl}_n$ or equivalently the Euler characteristic of global sections of a line bundle twist of the cotangent bundle to a partial flag variety \cite {Br} \cite{Shim:cyclageLR} 
\cite{Shim:affinecrystal}.

Our formula implies that the cyclic quiver Hall-Littlewood functions to which it applies are the images of ordinary (single loop) parabolic Hall-Littlewood functions under a plethystic substitution (Corollary~\ref{C:HLpleth}). We interpret this in the representation theory of the wreath product groups $\Ga_n=\Ga^n\rtimes S_n$, where $\Ga$ is a cyclic group of order $r$, by showing that the plethystic substitution is realized by a graded form of induction from $S_n$ to $\Ga_n$ (Proposition~\ref{P:frob-ind}). We deduce that the graded induction of the (singly-graded) Garsia-Procesi module $R_\mu$ \cite{GP} is a graded $\Ga_n$-module whose Frobenius characteristic can be identified with a cyclic quiver parabolic Hall-Littlewood function (Corollary~\ref{C:Rmu-ind}).

In a future work \cite{OS:wreathHL} we will study the relationship between the parabolic Hall-Littlewood functions for the cyclic quiver
and Haiman's wreath Hall-Littlewood functions (wreath Macdonald polynomials at $q=0$) \cite{H}.

\section*{Acknowledgements}
The authors gratefully acknowledge support from NSF grant DMS-1600653.

\section{Statement of Main Results}

\subsection{Cyclic quiver symmetric functions}
\label{SS:cyclic quiver symmetric functions}
In this article we work with the cyclic quiver $Q$ on $r$ nodes.
It has node set $Q_0 = \Z/r\Z$ and arrow set $Q_1 = \{(i,i+1)\mid i\in Q_0\}$ where expressions involving elements of $Q_0$ are understood modulo $r$. We set $\hat{Q}_1=Q_1\setminus\{(r-1,0)\}$.

Let $T^{Q_1} \cong (\C^*)^r$ be an algebraic torus with a copy of $\C^*$ for each arrow $a\in Q_1$. Write $R(T^{Q_1}) = \Z[t_a^{\pm1}\mid a\in Q_1]$ where $t_a$ is the exponential weight of the $a$-th copy of $\C^*$. Let $\La^Q = \bigotimes_{i\in Q_0} \La^{(i)}$ be the tensor power of the algebra of symmetric functions $\La$ with a tensor factor $\La^{(i)}\cong \La$ for each vertex $i\in Q_0$, with coefficients in $R(T^{Q_1})$.
We use notation $f[X^{(i)}]$ for $f\in \La$ and $i\in Q_0$, to denote the tensor in $\La^{Q_1}$ having $f$ in the $i$-th tensor factor and
$1$'s elsewhere. Let $\Y$ be Young's lattice of partitions.
Then $\La^Q$ has a basis given by the tensor Schur functions
$s_\lad = \prod_{i\in Q_0} s_{\la^{(i)}}[X^{(i)}]$ where $\lad=(\la^{(0)},\dotsc,\la^{(r-1)})\in\Y^{Q_0}$ is a $Q_0$-multipartition.

\subsection{Lusztig data}
\label{SS:Lusztig data}
A Lusztig datum is a sequence of triples $\{(i_k,a_k,\mu(k)) \mid 1\le k\le m\}$
where $i_k\in Q_0$, $a_k\in\Z_{>0}$, and $\mu(k)\in X_+(GL_{a_k})$ is a dominant weight. Let
$\bi=(i_1,i_2,\dotsc,i_m)$, $\ba=(a_1,a_2,\dotsc,a_m)$ and
$\mu(\bullet)=(\mu(1),\mu(2),\dotsc,\mu(m))$.

We say the Lusztig datum $(\bi,\ba,\mu(\bullet))$
is \textit{periodic} if $i_m=r-1$ and $i_{k+1}=i_k+1$
for all $1\le k<m$, \textit{even} if $i_1=0$, \textit{Borel} if $a_k=1$ for all $k$,
\textit{rectangular} if the weight $\mu(k)$ is a rectangle for all $k$, that is, has the form $(\gamma_k^{a_k})$ for some $\gamma_k\in \Z_{\ge0}$,
\textit{concentrated at $i\in Q_0$} if $\mu(k)=(0^{a_k})$
for all $k$ such that $i_k\ne i$, and \textit{dominant} if,
for every $i\in Q_0$, the sequence of weights
$\mu(k)$ for $i_k=i$, concatenates to a dominant weight. A periodic Lusztig datum will be called \textit{balanced} if $a_k=a_{k+1}$ whenever $i_k\neq r-1$.

For even periodic Borel Lusztig data, the quiver Kostka-Shoji polynomial was defined by Finkelberg and Ionov \cite{FI}. If all $t_a$ are set to a single variable this recovers Shoji's Green functions for the complex reflection group given by the wreath product of $S_n$ with the cyclic group $\Z/r\Z$, in the case of limit symbols \cite{Sho1,Sho2}.

We consider dominant periodic balanced rectangular Lusztig data that are concentrated at node $r-1$. Such data can be specified by a triple
$(\mu,\eta,i_1)$ where
$\mu\in \Y_s$ is a partition with $s$ parts which are allowed to be zero, $\eta=(\eta_1,\dotsc,\eta_s)\in\Z_{>0}^s$ with $\sum_k\eta_k=n$
(which defines a standard parabolic subgroup of $GL_n$),
and $i_1\in Q_0$. Given $(\mu,\eta,i_1)$ a Lusztig datum is constructed using $s$ passes around the cyclic quiver $Q$ with node set $\Z/r\Z$.
The first pass places $GL_{\eta_1}$-weights at nodes $i_1$, $i_1+1$, up to $r-1$. For $k$ going from $2$ up to $s$, the $k$-th pass places
$GL_{\eta_k}$-weights at all the nodes in order from $0$ to $r-1$.
For $i\ne r-1$, all weights placed at node $i$ are the zero weight.
At node $r-1$ the $k$-th pass places the rectangular weight
$(\mu_k^{\eta_k})$ having $\eta_k$ rows and $\mu_k$ columns.

\begin{ex} Let $r=2$, $i_1=1$, $\eta=(\eta_1,\eta_2)=(2,2)$,
and $\mu=(2,1)$. We have Lusztig datum
with first pass $(1,2,(2,2))$ and second pass
$(0,2,(0,0))$, $(1,2,(1,1))$ so that $\bi=(1,0,1)$,
$\ba=(2,2,2)$ and $\mu(\bullet)=((2,2),(0,0),(1,1))$.
This Lusztig datum is dominant: the weights at $0\in Q_0$ concatenate to $(0,0)\in X_+(GL_2)$ and at $1\in Q_0$ concatenate to $(2,2,1,1)\in X_+(GL_4)$.
\end{ex}

\subsection{Cyclic quiver HL functions and Kostka-Shoji polynomials}
Let $\Omega[X]= \prod_{x\in X} (1-x)^{-1}$ be the Cauchy kernel.
For a symmetric function $f$ let $f[X^{(i)}]^\perp$ be the operator adjoint (with respect to the Hall inner product) to multiplication by $f$, all with respect to the tensor factor $\La^{(i)}$. For $i\in Q_0$ define the generating series of operators \cite{OS:newquiver}:
\begin{align}\label{E:Hop}
H^{(i)}(z) &= \sum_{d\in\Z} z^d H^{(i)}_d = \Omega[zX^{(i)}] \Omega[-z^{-1}(X^{(i)}-t_{i,i+1}X^{(i+1)})]^\perp.
\end{align}
For all $d\in\Z$, $H^{(i)}_d$ is a linear endomorphism of $\La^Q$ of
degree $d$. It is the cyclic quiver analogue of Garsia's variant of Jing's Hall-Littlewood creation operator \cite{G} \cite{J}.

For $i\in Q_0$ and $a\in \Z_{>0}$ let $Z=(z_1,z_2,\dotsc,z_a)$
and $R(Z) = \prod_{1\le k<\ell\le a} (1- z_\ell/z_k)$.
Define \cite{OS:newquiver}:
\begin{align}\label{E:paraHop}
H^{(i,a)}(Z) &= \sum_{\beta\in\Z^a} z^\beta H^{(i,a)}_\beta = 
R(Z) H^{(i)}(z_1) \dotsm H^{(i)}(z_a).
\end{align}
This is the cyclic quiver analogue of the parabolic Hall-Littlewood creation operator of \cite{SZ}.

Given a Lusztig datum $(\bi,\ba,\mu(\bullet))$, the associated cyclic quiver parabolic Hall-Littlewood function $\qHL{(\bi,\ba)}{\mu(\bullet)}[\Xd;t_{Q_1}]\in \La^Q$ is defined by $\qHL{(\vn,\vn)}{\vn} = 1$ and 
\begin{align}
\qHL{(\bi,\ba)}{\mu(\bullet)}[\Xd;t_{Q_1}] = 
\HL{(i_1,a_1)}{\mu(1)} \cdot \qH^{(\hbi,\hba)}_{\hmu(\bullet)}[\Xd;t_{Q_1}]
\end{align}
where $\hbi=(i_2,i_3,\dotsc)$, $\hba=(a_2,a_3,\dotsc)$,
and $\hmu(\bullet) = (\mu(2),\mu(3),\dotsc)$.

The cyclic quiver parabolic Kostka-Shoji polynomials
$\qK{\lad}{\bi,\ba,\mu(\bullet)}(\tQ)\in R(T^{Q_1})$ are defined by
\begin{align}
\qHL{(\bi,\ba)}{\mu(\bullet)}[\Xd;\tQ] = 
\sum_{\lad\in \Y^{Q_0}} 
\qK{\lad}{\bi,\ba,\mu(\bullet)}(\tQ) s_\lad[\Xd].
\end{align}
Since the quiver $Q$ has one cycle, the polynomials
$\qK{\lad}{\bi,\ba,\mu(\bullet)}(\tQ)$ essentially have only one
variable. To make this precise
let $\alpha^{(i)}=\epsilon^{(i)}-\epsilon^{(i+1)}$ for $0\le i\le r-2$
be the simple roots of $GL(\C^{Q_0})$ where $\{\epsilon^{(i)}\mid i\in Q_0\}$ is the standard basis of $\Z^{Q_0}$. By the proof of \cite[Lemma 2.20]{OS:newquiver},
$\qK{\lad}{\bi,\ba,\mu(\bullet)}(\tQ) =0$
unless $\sum_{i\in Q_0} |\la^{(i)}| \epsilon^{(i)} -
\sum_k |\mu(k)| \epsilon^{(i_k)}$ is in the root lattice of
$GL(\C^{Q_0})$, which means it has the form $\sum_{i=0}^{r-2} a_i \alpha^{(i)}$ for some integers $a_i$. In that case we define the Laurent monomial
\begin{align}\label{E:texp}
  t_{\hat{Q}_1}^{\lad-\mu(\bullet)} = \prod_{i=0}^{r-2} t_{i,i+1}^{a_i}.
\end{align}
By \cite[Lemma 2.20]{OS:newquiver} there is a polynomial
$\qKred{\lad}{\bi,\ba,\mu(\bullet)}(t)\in\Z[t]$ called the reduced
Kostka-Shoji polynomial, such that
\begin{align}\label{E:redKS}
\qK{\lad}{\bi,\ba,\mu(\bullet)}(\tQ) = 
t_{\hat{Q}_1}^{\lad-\mu(\bullet)}
\qKred{\lad}{\bi,\ba,\mu(\bullet)}(t_{01} t_{12}\dotsm t_{r-1,0}).
\end{align}
We see that the single essential variable is the product of the arrow variables going around the cycle. 

When $(\bi,\ba,\mu(\bullet))$ is the rectangular Lusztig datum associated to the triple $(\mu,\eta,i_1)$ as in \S \ref{SS:Lusztig data} we use the notation
\begin{align}
\qHL{}{\mu,\eta,i_1}[\Xd;\tQ] &= \qHL{\bi,\ba}{\mu(\bullet)}[\Xd;\tQ] \\
\qK{\lad}{\mu,\eta,i_1}(\tQ) &= \qK{\lad}{\bi,\ba,\mu(\bullet)}(\tQ)
\\
\qKred{\lad}{\mu,\eta,i_1}(t)&=\qKred{\lad}{\bi,\ba,\mu(\bullet)}(t).
\end{align}

\subsection{LR multitableaux} The definitions in the next
several subsections follow \cite{Shim:cyclageLR}. We refer the reader to Appendix~\ref{S:tableau} for conventions and further details on our tableau constructions.
Let $A_1,\dotsc,A_s$ be the consecutive subintervals of $[n]=\{1,2,\dotsc,n\}$ where $|A_k|=\eta_k$ and let $Y_k$
be the rectangular tableau of width $\mu_k$ and height $\eta_k$ whose
rows are constant and contain the values of $A_k$.

\begin{ex} Continuing the previous example we have $A_1=\{1,2\}$, $A_2=\{3,4\}$ and
$$
Y_1 = \tableau[sty]{1&1\\2&2} \qquad Y_2 = \tableau[sty]{3\\4}
$$
\end{ex}

Say that a word $u$ in the alphabet $[n]$ is $(\mu,\eta)$-LR (Littlewood-Richardson) if for all $1\le k\le s$, we have the Knuth equivalence $u|_{A_k}\equiv Y_k$ (see \S \ref{SS:Knuth}) where
$u|_{A_k}$ is the subword obtained by erasing letters of $u$ not in $A_k$. 

We identify (semistandard) tableaux with their reading words (see \S\ref{SS:Knuth}). By a multitableau we mean a tuple $T^\bullet=(T^{(i)})_{i\in Q_0}$ of tableau. The word of a multitableau is defined as the concatenation $\word(T^\bullet)=T^{(0)}T^{(1)}\dotsm T^{(r-1)}$ of reading words of each tableau in the multitableau. Say that a tableau or multitableau is $(\mu,\eta)$-LR if its word is. 

Let $\LRword{\mu,\eta}$ denote the set of $(\mu,\eta)$-LR words.
Let $\LRtab{\la}{\mu,\eta}$ be the set of $(\mu,\eta)$-LR tableaux of shape $\la\in \Y$.

\begin{rem} The name LR tableau comes from the fact
(see Cor. \ref{C:RW}) that the LR coefficient of $s_\la$ in 
the product of the Schur functions $s_{(\mu_k^{\eta_k})}$,
is equal to $|\LRtab{\la}{\mu,\eta}|$.
\end{rem}

\subsection{Rotation of LR words}
Let $w_0^{\eta}$ be the long element of the subgroup
$S_\eta=S_{\eta_1}\times S_{\eta_2}\times \dotsm \times S_{\eta_s}$ of $S_n$. The following operation allows the ``rotation" of LR words.
This must be used every time letters pass between the $0$-th and $(r-1)$-th tableaux in a multitableau.

\begin{prop} \label{P:rotation}
For words $u$ and $v$, 
$uv\in \LRword{\mu,\eta}$ if and only if $(w_0^\eta v)(w_0^\eta u)\in \LRword{\mu,\eta}$, where $w_0^\eta$ acts via the composition of $GL_n$ crystal reflection operators (see \S\ref{SS:crystal}). This induces an action of the cyclic group $\Z/N\Z$ on $\LRword{\mu,\eta}$ where $N=\sum_k \mu_k \eta_k$.
\end{prop}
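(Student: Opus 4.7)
The plan is to reduce the first assertion to a single alphabet block. Writing $w_0^\eta=w_0^{\eta_1}\cdots w_0^{\eta_s}$ as a product of pairwise commuting crystal reflections, each factor $w_0^{\eta_k}$ uses only simple reflections $s_i$ with $i,i+1\in A_k$ and therefore acts only on letters in $A_k$. Restriction to $A_k$ thus commutes with $w_0^\eta$, giving $(w_0^\eta w)|_{A_k}=w_0^{\eta_k}(w|_{A_k})$ inside the $GL(A_k)$-crystal. Since the $(\mu,\eta)$-LR property is by definition the conjunction over $k$ of $w|_{A_k}\equiv Y_k$, the proposition reduces to: for each $k$ and any words $a,b$ on $A_k$, $ab\equiv Y_k$ if and only if $w_0^{\eta_k}(b)\,w_0^{\eta_k}(a)\equiv Y_k$.

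Setting $*:=w_0^{\eta_k}$, the Lusztig involution on the $GL(A_k)$-crystal---the \emph{full} Weyl-group involution for this alphabet---I would invoke two standard facts. First, $*$ is a plactic anti-homomorphism: $(ab)^*\equiv b^*a^*$ on $A_k^*$, which is the tensor-product identity $S_{w_0}(x\otimes y)=S_{w_0}(y)\otimes S_{w_0}(x)$ for crystal reflections. Second, $Y_k^*=Y_k$: its $GL(A_k)$-content $(\mu_k,\dots,\mu_k)$ is $w_0^{\eta_k}$-invariant, and among tableaux of shape $(\mu_k^{\eta_k})$ on $A_k$ there is a unique one with this content. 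Chaining gives $ab\equiv Y_k\iff (ab)^*\equiv Y_k^*=Y_k\iff b^*a^*\equiv Y_k$, and taking the conjunction over $k$ yields the first assertion.

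For the $\Z/N\Z$-action, I would take the unit rotation $\rho$ (case $|u|=1$) as a candidate generator; by the first part $\rho\colon\LRword{\mu,\eta}\to\LRword{\mu,\eta}$ is a self-bijection, and $k\mapsto\rho^k$ defines the action provided $\rho^N=\id$ as word-level maps. The per-alphabet anti-morphism identity yields the restriction-level statement $\rho(w)|_{A_k}\equiv w_0^{\eta_k}(w|_{A_k})$ and hence $\rho^2(w)|_{A_k}\equiv w|_{A_k}$ in each $A_k$, but this falls short of the word-level identity required on the ambient words.

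The main obstacle is therefore this word-level $\rho^N=\id$: the per-alphabet plactic anti-homomorphism only controls restrictions, not the ambient word. I anticipate the cleanest route is either a direct positional/inductive argument tracking how each of the $N$ positions is transported through $N$ iterated rotations---exploiting that $w_0^\eta$ changes letter values at positions within each $A_k$ without permuting any positions---or an identification of $\rho$ with a twisted Sch\"utzenberger promotion on the RSK recording tableau $Q(w)$, appealing to an order-$N$ result available for the rectangular-LR data here.
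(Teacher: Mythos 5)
Your reduction to a single block $A_k$ is fine, and so is the commutation of restriction with $w_0^\eta$; the gap is in the two ``standard facts'' you then invoke. The operator $w_0^{\eta_k}$ in the proposition is the composition of crystal reflection operators, i.e.\ the Weyl-group action on the crystal of words; this is \emph{not} the Lusztig (Sch\"utzenberger) involution, and it is \emph{not} a plactic anti-homomorphism. Concretely, for $GL_3$ take $u=3$, $v=12$, $w_0=s_1s_2s_1$: both $s_1$ and $s_2$ fix the word $312$, so $S_{w_0}(uv)=312$, whose $P$-tableau is the shape-$(2,1)$ tableau with reading word $312$; but $S_{w_0}(v)S_{w_0}(u)=23\cdot 1=231$, whose $P$-tableau is the \emph{other} tableau of shape $(2,1)$ and weight $(1,1,1)$ (reading word $213$). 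Hence $S_{w_0}(uv)\not\equiv S_{w_0}(v)S_{w_0}(u)$. (On this two-dimensional weight space the Lusztig involution swaps the two tableaux while $S_{w_0}$ fixes both --- exactly the discrepancy between the two maps you are conflating.) As written, your chain proves the rotation property for reverse-complementation, not for the operation in the statement. The argument is repairable, but it needs an observation you do not make: since $B((\mu_k^{\eta_k}))$ is a one-vertex (determinant-power) $GL(A_k)$-crystal, $ab\equiv Y_k$ forces $\vp_i(a)=\ve_i(b)=0$ and $\ve_i(a)=\vp_i(b)$ for every $i$ with $i,i+1\in A_k$, so $a$ is a lowest-weight and $b$ a highest-weight word; on extremal-weight words $w_0^{\eta_k}$ \emph{does} agree plactically with reverse-complementation (both produce the unique tableau of the same shape and opposite extremal weight), whence $w_0^{\eta_k}(b)\,w_0^{\eta_k}(a)\equiv b^{\#}a^{\#}=(ab)^{\#}\equiv Y_k$; the converse follows by applying this to the pair $(w_0^{\eta_k}b,\,w_0^{\eta_k}a)$ and using $(w_0^{\eta_k})^2=\id$.

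The second assertion is not proved. You correctly isolate, and then leave open, the fact that the one-step rotation $\rho$ satisfies $\rho^N=\id$ on $\LRword{\mu,\eta}$; this is the real content of the claim outside the Borel case (where $w_0^\eta=\id$ and rotation of an $N$-letter word trivially has order dividing $N$), because each time a letter wraps around it is altered by $w_0^\eta$, so $\rho^N=\id$ is not positional bookkeeping. There is also the unaddressed compatibility $(w_0^\eta v)(w_0^\eta u)=\rho^{|v|}(uv)$ needed for the family of operations to assemble into powers of a single generator; this does not follow formally, since $w_0^\eta$ does not distribute over concatenation. Both points (and the first assertion itself) are what the present paper imports from \cite{Shim:cyclageLR}; neither is established by your proposal.
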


\begin{rem} 
\begin{enumerate}
\item If $\eta_k=1$ for all $k$ then $S_\eta$ is the identity subgroup and the action is usual rotation of a word by positions.
\item Since the crystal reflection operators $s_i$ are well-defined on Knuth classes (that is, if $u\equiv v$ then $s_i u \equiv s_i v$),
$\Z/N\Z$ acts on the set $\bigsqcup_{\la\in\Y}\LRtab{\la}{\mu,\eta}$. This leads to the cyclage poset structure on LR tableaux \cite{Shim:cyclageLR} which generalizes the cyclage poset on tableaux \cite{LS}.
\end{enumerate}
\end{rem}

\begin{ex} Let $u=2$ and $v=4 2 1 3 1$. Then
	$uv\in \LRword{\mu,\eta}$ because
	$uv|_{A_1} = 2211\equiv Y_1$ and
	$uv|_{A_2} = 43 \equiv Y_2$.
We have $w_0^\eta(u) = 1$, $w_0^\eta v = 4 2 1 3 2$.
We have $(w_0^\eta v)(w_0^\eta u) = 421321\in Y_{\mu,\eta}$ because $421321|_{A_1} = 2121\equiv Y_1$
and $421321|_{A_2}=43\equiv Y_2$.
\end{ex}

\subsection{A charge statistic for LR words}
Let $|u|$ denote the length of the word $u$.

\begin{prop} \label{P:LR-charge} \cite{Shim:cyclageLR}
There is a unique function $\charge_{\mu,\eta}:\LRword{\mu,\eta}\to \Z_{\ge0}$ such that
\begin{enumerate}
\item If $\mu$ and $\eta$ are empty then $\charge_{\mu,\eta}(\vn)=0$
where $\vn$ is the empty word.
\item $\charge_{\mu,\eta}$ is constant on Knuth classes.
\item If $u Y_1 v\in \LRword{\mu,\eta}$ then writing
$\mu=(\mu_1,\hmu)$ and $\eta=(\eta_1,\heta)$, we have
\begin{align*}
\charge_{\mu,\eta}(u Y_1 v) = |v| + \charge_{\hmu,\heta}((w_0^{\heta} v) (w_0^{\heta} u)).
\end{align*}
\end{enumerate}
\end{prop}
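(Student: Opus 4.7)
The plan is to prove uniqueness and existence simultaneously by strong induction on $s=\ell(\eta)$, the number of rectangles. The base case $s=0$ is immediate: $\LRword{\vn,\vn}=\{\vn\}$ and condition (1) pins down the value.

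For the inductive step, assume $\charge_{\hmu,\heta}$ exists uniquely on $\LRword{\hmu,\heta}$. The first task is to show that every Knuth class in $\LRword{\mu,\eta}$ contains a representative of the form $u Y_1 v$ with $Y_1$ occurring as a contiguous subword. Since $w|_{A_1}\Knuth Y_1$ for every $w\in\LRword{\mu,\eta}$, one can lift the Knuth moves on the subword $w|_{A_1}$ witnessing this equivalence to Knuth moves on the full word $w$ by a jeu-de-taquin-style argument, exploiting the fact that letters outside $A_1$ do not obstruct elementary Knuth moves among $A_1$-letters that would otherwise be adjacent. Given this, conditions (2) and (3) together with the inductive knowledge of $\charge_{\hmu,\heta}$ uniquely determine $\charge_{\mu,\eta}$ on each Knuth class.

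For existence, define $\charge_{\mu,\eta}(w)$ by picking any representative $u Y_1 v$ in the Knuth class of $w$ and applying the formula in (3). Properties (1) and (2) then hold by construction, and (3) holds for decompositions of the chosen representative; what remains is to verify that the value is independent of all choices. Two kinds of ambiguity must be addressed. (a) A single word may admit several decompositions $u Y_1 v=u' Y_1 v'$: by induction on $|v'|-|v|$ it suffices to treat a one-step shift, where Proposition~\ref{P:rotation} exhibits $(w_0^{\heta}v')(w_0^{\heta}u')$ as an $N$-rotation of $(w_0^{\heta}v)(w_0^{\heta}u)$ in $\LRword{\hmu,\heta}$, and careful length bookkeeping shows that the change in $|v|$ exactly compensates for the change in $\charge_{\hmu,\heta}$ under that rotation. (b) Two Knuth-equivalent words both of the form $u Y_1 v$ produce the same value: moves occurring inside $u$ or inside $v$ preserve the right-hand side of (3) by the inductive Knuth invariance of $\charge_{\hmu,\heta}$, while moves straddling $Y_1$ reduce to (a) after a preliminary rotation.

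The main obstacle is the well-definedness argument in (a), which demands a nontrivial interplay between Proposition~\ref{P:rotation} and the inductive understanding of $\charge_{\hmu,\heta}$. The rotation on LR words is defined in terms of crystal reflection operators, so identifying the shift in $|v|$ with a matching shift in $\charge_{\hmu,\heta}$ requires knowing how the latter interacts with both Knuth moves and crystal reflections on the preceding factor. The lifting lemma for Knuth moves (used both for uniqueness and in the reduction of (b) to (a)) is subtle but more routine, as it amounts to showing that jeu-de-taquin on a subalphabet of an LR word can be executed without disturbing the other subalphabets' LR-ness.
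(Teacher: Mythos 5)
The paper does not prove this proposition --- it is imported from \cite{Shim:cyclageLR} --- so your attempt has to stand on its own. The architecture you choose (induction on $\ell(\eta)$, reduction of both existence and uniqueness to well-definedness of the recursion (3)) is the right shape, but the step that carries all the content is exactly the one you defer. Your inductive hypothesis is only that $\charge_{\hmu,\heta}$ exists and is unique subject to (1)--(3); that hypothesis says \emph{nothing} about how $\charge_{\hmu,\heta}$ transforms under the rotation $y_1y_2\mapsto (w_0^{\heta}y_2)(w_0^{\heta}y_1)$ of Proposition~\ref{P:rotation}, yet your step (a) needs a precise quantitative statement of that kind (``the change in $|v|$ exactly compensates for the change in $\charge_{\hmu,\heta}$''). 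That statement --- charge drops by one under rotation by one position except in a prescribed degenerate case --- is the cyclage/cocyclage theorem, i.e.\ the actual hard content of \cite{LS} and \cite{Shim:cyclageLR}. It must either be proved separately or folded into a strengthened induction hypothesis; as written you name it as ``the main obstacle'' and stop, so the proof is not complete.

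Two secondary problems. First, your reduction of (a) to ``one-step shifts'' of $Y_1$ inside a \emph{fixed} word does not work: $\word(Y_1)=\eta_1^{\mu_1}\dotsm 1^{\mu_1}$, and two contiguous occurrences of this word at positions differing by one force the word to be constant on that stretch, so occurrences in a single word are typically disjoint and cannot be connected by unit shifts. The genuine one-step move is rotation of the \emph{whole} LR word (which changes the word, mixing your cases (a) and (b)); the correct organization is to show that any two admissible factorizations of Knuth-equivalent words are connected by Knuth moves and whole-word rotations, and then control $\charge$ under each. Second, the ``lifting'' of Knuth moves from the subword $w|_{A_1}$ to $w$ is not a valid mechanism --- Lemma~\ref{L:Knuthres} only goes in the restriction direction. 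The standard way to produce a representative $uY_1v$ is to pass to $w\equiv\word(P(w))$, note that $P(w)|_{\le\eta_1}=Y_1$ occupies the top-left rectangle, and split $Y_1$ off the first $\eta_1$ rows via $u_k=k^{\mu_1}v_k$ and $u_{\eta_1}\dotsm u_1\equiv Y_1v_{\eta_1}\dotsm v_1$, exactly as done in the construction of $\iota$ in Section~\ref{S:proof}. You should also record why $(w_0^{\heta}v)(w_0^{\heta}u)$ lies in $\LRword{\hmu,\heta}$, which is needed for the right-hand side of (3) to be meaningful.
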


\begin{rem} If $\eta_k=1$ for all $k$ (the Lusztig datum is Borel)
then $\charge_{\mu,\eta}$ is the charge statistic of Lascoux and Sch\"utzenberger \cite{LS}.
\end{rem}

\subsection{Tableau formula}

For $\lad\in\Y_n^{Q_0}$ let $\LRtab{\lad}{\mu,\eta,i_1}$ be the
set of multitableaux $\Td$ of shape $\lad$ (i.e., each $T^{(i)}$ has shape $\la^{(i)}$) such that
$\word(\Td)\in \LRword{\mu,\eta}$ and
$T^{(i)}|_{A_1}=\vn$ for $0\le i<i_1$. Note that for $i_1=0$,
$\LRtab{\lad}{\mu,\eta,0}$ is just the set of multitableaux of shape $\lad$ whose word is $(\mu,\eta)$-LR.

For $\Td\in\LRtab{\lad}{\mu,\eta,i_1}$ we set $\charge_{\mu,\eta}(\Td)=\charge_{\mu,\eta}(\word(\Td))$.

\begin{thm} \label{T:main}
We have
\begin{align}\label{E:tableau}
\qKred{\lad}{\mu,\eta,i_1}(t) =
	 \sum_{\Td\in \LRtab{\lad}{\mu,\eta,i_1}} t^{\charge_{\mu,\eta}(\Td)}. 
\end{align}
\end{thm}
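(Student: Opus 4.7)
The plan is to induct on $s$, the length of $\mu$, matching an operator recursion for the left-hand side with the charge recursion of Proposition~\ref{P:LR-charge}(3) on the right-hand side. The base case $s = 0$ is immediate: both sides vanish unless $\lad = \vn$, in which case both equal $1$.

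For the inductive step, I would factor
\begin{align*}
\qHL{}{\mu,\eta,i_1} = \mathcal{P}_1^{(i_1)} \cdot \qHL{}{\hmu,\heta,0},
\end{align*}
where $\mathcal{P}_1^{(i_1)} = H^{(i_1,\eta_1)}_{(0^{\eta_1})} \dotsm H^{(r-2,\eta_1)}_{(0^{\eta_1})} H^{(r-1,\eta_1)}_{(\mu_1^{\eta_1})}$ collects the operators of the first pass around the cycle, responsible for creating the first rectangle $Y_1$. Dually, for a multitableau $\Td \in \LRtab{\lad}{\mu,\eta,i_1}$, I would choose a Knuth-equivalent form $\word(\Td) \equiv u Y_1 v$ (obtained by sliding the $A_1$-subword into place as $Y_1$ and recording the remaining letters as $u, v$); Proposition~\ref{P:LR-charge}(3) then gives $\charge_{\mu,\eta}(\Td) = |v| + \charge_{\hmu,\heta}((w_0^{\heta}v)(w_0^{\heta}u))$. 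The constraint $T^{(i)}|_{A_1} = \vn$ for $i < i_1$ is what aligns with $\mathcal{P}_1^{(i_1)}$ starting at node $i_1$. The key lemma to prove states that the matrix coefficient of $s_\lad$ in $\mathcal{P}_1^{(i_1)} \cdot s_{\nud}$, up to the torus monomial prefactor in \eqref{E:redKS}, equals the sum of $t^{|v|}$ over multi-skew-tableaux $\Vd = (V^{(i_1)}, \dotsc, V^{(r-1)})$ of shape $\lad/\nud$ satisfying $V^{(i_1)} \dotsm V^{(r-1)} \equiv Y_1$. Given this lemma, the inductive hypothesis $\qKred{\nud}{\hmu,\heta,0}(t) = \sum_{\Ud} t^{\charge_{\hmu,\heta}(\Ud)}$ together with the bijection $\Td \leftrightarrow (\Vd, \Ud)$ provided by $\word(\Ud) \equiv (w_0^{\heta}v)(w_0^{\heta}u)$ completes the step.

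The main obstacle is the explicit computation of the matrix coefficients of $\mathcal{P}_1^{(i_1)}$, and in particular seeing that the rotation by $w_0^{\heta}$ in the charge recursion arises naturally from the operator side. The difficulty is concentrated in $H^{(r-1)}(z) = \Omega[zX^{(r-1)}]\,\Omega[-z^{-1}(X^{(r-1)} - t_{r-1,0} X^{(0)})]^\perp$, which couples the factors $X^{(r-1)}$ and $X^{(0)}$ through the wrap-around arrow. Extracting the coefficient of $z^{(\mu_1^{\eta_1})}$ from the Vandermonde-weighted product of $\eta_1$ copies of $H^{(r-1)}(z)$ yields an alternating sum that must collapse, via crystal-theoretic cancellations generalizing the single-loop argument underlying Shimozono's formula for $r=1$, onto only the $(\mu_1,\eta_1)$-LR contributions. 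The long element $w_0^{\heta}$ acting by crystal reflection is precisely what transports letters between factors $0$ and $r-1$, and matching this action with the $t_{r-1,0}X^{(0)}$-coupling in $H^{(r-1)}(z)$ is the technical crux of the proof.
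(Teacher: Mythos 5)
Your overall strategy---peel off the creation operators for the first rectangle and match the resulting operator recursion against the charge recursion of Proposition~\ref{P:LR-charge}(3)---is the same as the paper's, but the step where essentially all of the work lies is left unproved. Your ``key lemma'' on the matrix coefficients of $\mathcal{P}_1^{(i_1)}$ \emph{is} the theorem in disguise: you acknowledge that extracting the coefficient of $z^{(\mu_1^{\eta_1})}$ yields an alternating sum that ``must collapse, via crystal-theoretic cancellations,'' but you supply no cancelling involution, no identification of the surviving terms, and no verification that the surviving weights are the claimed powers of $t$. In the paper this is the entire content of Section~\ref{S:proof}: the recurrences \eqref{E:rec not last} and \eqref{E:rec last} (one node of the pass at a time, so that each step is a Morris-type signed sum over $S_n$ coming from the Weyl denominator), the set $\M{\lad}{\mu,\eta,i_1}$ of Morris data indexing the terms, the weight-preserving embedding $\iota$ of $\LRtab{\lad}{\mu,\eta,i_1}$ into the Morris data (Lemma~\ref{L:embedding charge}), and the sign-reversing involution $\Phi$ built from the dual crystal operators $s_pe_p$ acting on the recording tableau $Q(\uu)$, whose fixed points are exactly the image of $\iota$. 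None of this is replaced by an alternative argument, so what you have is a plan rather than a proof.

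Moreover, the key lemma is not correctly posed. The operator $H^{(r-1)}(z)$ contains $\Omega[z^{-1}t_{r-1,0}X^{(0)}]^\perp$, which strips letters from the $0$-th tensor factor of its argument; so in passing from $s_{\nud}$ to a term $s_{\lad}$ of $\mathcal{P}_1^{(i_1)}s_{\nud}$, cells of the old multitableau are transported across the wrap-around arrow, and $|\la^{(0)}|-|\nu^{(0)}|$ need not be $\ge 0$. Hence $\lad/\nud$ is not a skew shape, and a tuple $\Vd=(V^{(i_1)},\dotsc,V^{(r-1)})$ with $V^{(i_1)}\dotsm V^{(r-1)}\equiv Y_1$ accounts only for the $\mu_1\eta_1$ new cells, forcing $|v|=0$ and losing exactly the quantity $|v|$ that is the surviving power of $t_{r-1,0}$ in the reduced polynomial. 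Any correct formulation must record the old letters that migrate from node $r-1$ of $\Td$ to node $0$ of the inductive input; this is precisely where the rotation by $w_0^{\heta}$ and Lemma~\ref{L:skew tableau preserving} enter in the paper's construction of $\iota$, and it is also where the cancellation argument must be carried out.
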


\begin{rem}
In the special case of $r=2$ nodes and $\eta_k=1$ for all $k$ (Borel case), an equivalent formula was obtained in \cite{LiuShoji} using results of \cite{AH}. We give a combinatorial proof of Theorem~\ref{T:main} in Section~\ref{S:proof} which is independent of these results.
\end{rem}

\begin{rem}
The higher vanishing criterion of \cite{P} is sufficient to establish the positivity of $\qKred{\lad}{\mu,\eta,i_1}(t)$ in the Borel case for any $r$, but not in general.
\end{rem}

For $\Td\in \LRtab{\lad}{\mu,\eta,i_1}$, 
using notation similar to \eqref{E:texp} let
\begin{align}
  \wt_{\mu,\eta}(\Td) = t_{\hat{Q}_1}^{\lad-N \epsilon^{(r-1)}} (t_{0,1}t_{1,2}\dotsm t_{r-1,0})^{\charge_{\mu,\eta}(\Td)},
\end{align}
so that by \eqref{E:redKS}, Theorem~\ref{T:main} is equivalently expressed as
\begin{align}
\qK{\lad}{\mu,\eta,i_1}(t_{Q_1}) =	 
	 \sum_{\Td\in \LRtab{\lad}{\mu,\eta,i_1}} \wt_{\mu,\eta}(\Td).
\end{align}


For the single node version,
let $\qHL{}{\mu,\eta}[X;t]$ be the parabolic Hall-Littlewood symmetric function associated with the sequence of rectangles $(\mu_k^{\eta_k})$ for $1\le k\le s$ \cite{SZ}. 
Equivalently it is the graded character of a Kirillov-Reshetikhin module \cite{Shim:affinecrystal} and the cyclic quiver Hall-Littlewood symmetric function of \cite{OS:newquiver} for the single loop quiver with Lusztig data $(0,\eta_k,(\mu_k^{\eta_k}))$ for $1\le k\le s$
and $t=t_{00}$. Define their coefficients by
\begin{align}
\qHL{}{\mu,\eta}[X;t] = \sum_{\la\in\Y} \qK{\la}{\mu,\eta}(t) s_\la[X].
\end{align}

\begin{thm} \label{T:one node rectangles}
\cite{Shim:cyclageLR}
\begin{align}
\mathcal{K}^\la_{\mu,\eta}(t) =\sum_{T\in \LRtab{\la}{\mu,\eta}} t^{\charge_{\mu,\eta}(t)} 
\end{align}
\end{thm}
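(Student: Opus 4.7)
The plan is to induct on the number $s = \ell(\eta)$ of rectangles. The base case $s=0$ gives $\qHL{}{\vn,\vn}[X;t]=1=s_\vn[X]$, matched by the unique empty LR tableau of charge $0$. For the inductive step, I would peel off the leftmost rectangle using the creation-operator definition,
\begin{align*}
\qHL{}{\mu,\eta}[X;t] = H^{(0,\eta_1)}_{(\mu_1^{\eta_1})}\,\qHL{}{\hmu,\heta}[X;t],
\end{align*}
and derive a Pieri-type rule for the coefficient of $s_\la$ in $H^{(0,\eta_1)}_{(\mu_1^{\eta_1})}\,s_\nu$. By the formula \eqref{E:paraHop} this amounts to extracting the coefficient of $z^{(\mu_1^{\eta_1})}$ from $R(Z)H(z_1)\cdots H(z_{\eta_1})s_\nu$; a direct calculation (using \eqref{E:Hop} and the Jacobi--Trudi-like interaction of the $\Omega$ factors with $s_\nu$) yields a positive generating function $c^\la_\nu(t)$ over certain chains $\nu=\nu^{(0)}\subset\nu^{(1)}\subset\dotsm\subset\nu^{(\eta_1)}=\la$ of partitions with horizontal-strip skews, each strip tracked by an explicit power of $t$.

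Combining with the induction hypothesis yields
\begin{align*}
\qKred{\la}{\mu,\eta}(t) = \sum_{\nu} c^\la_\nu(t)\sum_{S\in\LRtab{\nu}{\hmu,\heta}} t^{\charge_{\hmu,\heta}(S)}.
\end{align*}
I would then construct a bijection between pairs $(S,\text{Pieri chain})$ and LR tableaux $T\in\LRtab{\la}{\mu,\eta}$, as follows. Given $T$, write its reading word in the form $uY_1 v$ up to Knuth equivalence (this is possible because $T|_{A_1}\equiv Y_1$ by the LR condition, so one can slide the copy of $Y_1$ to the appropriate position); the suffix $v$ records the letters of $[n]\setminus A_1$ that sit to the right of $Y_1$. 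Stripping the $A_1$-letters determines the Pieri chain (where the $\eta_1$ rows of $Y_1$ were inserted), and the residual multitableau, once acted on by the crystal reflection $w_0^{\heta}$ from Proposition \ref{P:rotation}, becomes an LR tableau $S$ for $(\hmu,\heta)$. Under this bijection the Pieri power of $t$ should be exactly $|v|$, and the inductive $\charge_{\hmu,\heta}(S)$ equals $\charge_{\hmu,\heta}((w_0^\heta v)(w_0^\heta u))$, so summing yields $t^{\charge_{\mu,\eta}(T)}$ by Proposition \ref{P:LR-charge}(3).

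The main obstacle is that both the Pieri coefficient and the charge recursion involve the rotation $w_0^\heta$, and one must engineer the bijection so that (i) the total $t$-statistic coming from the rectangular HL creation operator really equals $|v|$, and (ii) the rotation on the tableau side matches the alphabet re-indexing needed to apply the inductive hypothesis to $(\hmu,\heta)$. This is precisely the content of the cyclage poset constructed in \cite{Shim:cyclageLR}: one cyclage step decreases charge by exactly the length of the word passing from the left end to the right end through the $w_0^\eta$ twist, and the full cyclage orbits decompose $\bigsqcup_\la\LRtab{\la}{\mu,\eta}$ compatibly with the creation operator recursion. Verifying this compatibility, either by an explicit insertion algorithm or via the affine/KR crystal interpretation mentioned after Theorem \ref{T:one node rectangles}, completes the proof.
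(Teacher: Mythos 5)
You should first note that the paper does not prove Theorem~\ref{T:one node rectangles}: it is quoted from \cite{Shim:cyclageLR} (with the identification of the tableau sum as a coefficient of the parabolic Hall--Littlewood function coming from \cite{SW,SZ}, as the remark following the theorem explains), and the paper's own new argument in Section~\ref{S:proof} concerns the cyclic-quiver generalization. Judged as a self-contained proof, your proposal has a genuine gap at its central step: the claim that a ``direct calculation'' expands $H^{(0,\eta_1)}_{(\mu_1^{\eta_1})}s_\nu$ as a \emph{positive} generating function $c^\la_\nu(t)$ over chains of horizontal strips. The operator \eqref{E:paraHop} is an alternating (Weyl-symmetrized) expression, and its action on a single Schur function is a signed sum over coset representatives --- the one-node specialization of the recurrences \eqref{E:rec not last no kostka} and \eqref{E:rec last}, which carry the factor $(-1)^w$. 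These signs do not go away: already in the Borel case $\mu=(2,2,2)$, $\eta=(1,1,1)$, the partition $\nu=(4)$ occurs in the support of $\qHL{}{\hmu,\heta}$, and the coefficient of $s_{(3,3)}$ in $H^{(0,1)}_{(2)}\,s_{(4)}$ is $t-1$, which is not positive. So there is no well-defined set of ``Pieri chains'' to biject with, and the proposed bijection between pairs $(S,\text{chain})$ and $\LRtab{\la}{\mu,\eta}$ cannot be set up as stated; the entire content of the theorem is precisely the cancellation of these signed terms. A correct argument along your lines must replace the Pieri step by a sign-reversing, weight-preserving involution on Morris-type data whose fixed points correspond to the LR tableaux --- this is exactly what the paper does for its generalization (the embedding $\iota$ and the involution $\Phi$ built from $s_pe_p$ acting on the recording tableau $Q(\uu)$), and specializing that machinery to one node would give an honest proof.

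A secondary problem is that your last paragraph delegates the two essential compatibilities (that the $t$-exponent produced by the creation operator equals $|v|$, and that the $w_0^{\heta}$-rotation matches the re-indexing needed to invoke the inductive hypothesis for $(\hmu,\heta)$) to ``the content of the cyclage poset constructed in \cite{Shim:cyclageLR}''. Since that reference is the source of the theorem itself, this is circular as a proof. Moreover, Proposition~\ref{P:LR-charge}(3) requires an actual factorization $uY_1v$ of the word containing a literal copy of $Y_1$; producing it and controlling the rotation takes real work --- compare the row factorization $u_k=k^{\mu_1}v_k$ and the use of Lemma~\ref{L:skew tableau preserving} in Section~\ref{S:proof} --- and cannot be dismissed as a Knuth-equivalence remark.
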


\begin{rem} In \cite{Shim:cyclageLR} the above sum over $T$ was shown to give a graded isotypic component of a line bundle twist of the
coordinate ring of a nilpotent adjoint orbit closure \cite{SW}. Subsequently the parabolic Jing operators were defined \cite{SZ} and the above isotypic component was shown to agree with the coefficient of a Schur function in the parabolic Hall-Littlewood function, which by definition is created by the parabolic Jing operators. The connection of these notions to the Kirillov-Reshetikhin character was proved in \cite{Shim:affinecrystal}; independently similar but dual combinatorics were developed in \cite{SchWar} for KR characters.
\end{rem}

\begin{cor} \label{C:HLpleth}
With
\begin{align}
Y = \sum_{i=0}^{r-1} t_{i,i+1} t_{i+1,i+2}\dotsm t_{r-2,r-1} X^{(i)}
\end{align}
we have
\begin{align}
\qHL{}{\mu,\eta,i_1=0}[\Xd;t_{Q_1}] &=
\qHL{}{\mu,\eta}[Y;t_{01}t_{12}\dotsm t_{r-1,0}].
\end{align}
\end{cor}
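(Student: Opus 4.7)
The plan is to expand both sides of the claimed identity in the tensor Schur basis $\{s_\lad[\Xd]\}_{\lad\in \Y^{Q_0}}$ and match coefficients, using Theorem~\ref{T:main} on the left and Theorem~\ref{T:one node rectangles} on the right. On the left, the coefficient of $s_\lad[\Xd]$ is $\qK{\lad}{\mu,\eta,0}(t_{Q_1})$; by \eqref{E:redKS} this factors as $t_{\hat{Q}_1}^{\lad-\mu(\bullet)}\,\qKred{\lad}{\mu,\eta,0}(t)$ with $t=t_{01}t_{12}\dotsm t_{r-1,0}$, and Theorem~\ref{T:main} identifies the reduced polynomial with $\sum_{\Td\in\LRtab{\lad}{\mu,\eta,0}} t^{\charge_{\mu,\eta}(\Td)}$.

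For the right side, first I would apply Theorem~\ref{T:one node rectangles} to obtain $\qHL{}{\mu,\eta}[Y;t] = \sum_\la \sum_{T\in\LRtab{\la}{\mu,\eta}} t^{\charge_{\mu,\eta}(T)}\,s_\la[Y]$. Setting $c_i = t_{i,i+1}t_{i+1,i+2}\dotsm t_{r-2,r-1}$ (so $c_{r-1}=1$ and $Y=\sum_i c_i X^{(i)}$), I would iterate the coproduct formula $s_\la[X+Y]=\sum_\mu s_\mu[X]\,s_{\la/\mu}[Y]$ across the independent alphabets $X^{(0)},\dotsc,X^{(r-1)}$ and use the homogeneity $s_\nu[c\cdot X]=c^{|\nu|}s_\nu[X]$ to obtain
\begin{align*}
s_\la[Y] = \sum_\lad d^\la_\lad \Bigl(\prod_{i\in Q_0} c_i^{|\la^{(i)}|}\Bigr) s_\lad[\Xd],
\end{align*}
where $d^\la_\lad$ is the iterated Littlewood-Richardson coefficient computing the multiplicity of $V_\la$ in $V_{\la^{(0)}}\otimes\dotsm\otimes V_{\la^{(r-1)}}$.

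The central combinatorial input, and where I expect the only real work, is the identification of $d^\la_\lad$ with the number of multitableaux $\Td$ of shape $\lad$ such that $P(\word(\Td))$ equals any fixed tableau of shape $\la$. This follows by iterating the standard two-factor statement, itself a consequence of the Littlewood-Richardson rule together with the compatibility of RSK insertion with concatenation of reading words. Granting this, and using that both the $(\mu,\eta)$-LR condition and the charge statistic are Knuth-invariant (Propositions~\ref{P:rotation} and \ref{P:LR-charge}(2)), the $(\la,T)$-sum for fixed $\lad$ reparametrizes as $\sum_{\Td\in\LRtab{\lad}{\mu,\eta,0}} t^{\charge_{\mu,\eta}(\Td)}$; note that for $i_1=0$ the vanishing condition in the definition of $\LRtab{\lad}{\mu,\eta,i_1}$ is vacuous. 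Finally I would verify that the scalar prefactor $\prod_i c_i^{|\la^{(i)}|}$ matches $t_{\hat{Q}_1}^{\lad-\mu(\bullet)}$ for the rectangular Lusztig datum at $i_1=0$, which is concentrated at node $r-1$ with total mass $N=\sum_k\mu_k\eta_k$: using $\epsilon^{(i)}-\epsilon^{(r-1)}=\alpha^{(i)}+\alpha^{(i+1)}+\dotsm+\alpha^{(r-2)}$ and $\sum_i|\la^{(i)}|=N$, both expressions collapse to $\prod_{j=0}^{r-2} t_{j,j+1}^{\sum_{i\le j}|\la^{(i)}|}$, closing the coefficient match.
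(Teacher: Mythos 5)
Your proposal is correct and follows essentially the same route as the paper: expand $s_\la[Y]$ over the tensor Schur basis via the iterated coproduct, identify the resulting Littlewood--Richardson multiplicities with fibers of the RSK map $\Td\mapsto P(\word(\Td))$ (the paper phrases this via White's theorem and $D$-compatible recording tableaux for $D=\la^{(r-1)}*\dotsm*\la^{(0)}$), and transfer the charge statistic using its Knuth-invariance before matching the monomial prefactors. Your explicit check that $\prod_i c_i^{|\la^{(i)}|}=t_{\hat{Q}_1}^{\lad-N\epsilon^{(r-1)}}$ is carried out correctly and agrees with Remark~\ref{R:monomial for dom}.
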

\begin{proof} 
Iterating the coproduct formula $\Delta(s_\la)= \sum_{\mu,\nu\in\Y}\pair{s_\la}{s_\mu s_\nu} s_\mu \otimes s_\nu$ and appropriately scaling alphabets we have
\begin{align}
s_\la[Y] &= \sum_{\lad\in \Y^{Q_0}} t_{\hat{Q}_1}^{\lad-N \epsilon^{(r-1)}} c^{\la}_{\lad} s_\lad[\Xd] 
\end{align}
where $c^\la_\lad = \pair{s^\la}{\prod_{i\in Q_0} s_{\la^{(i)}}}$.
Applying this to $\qHL{}{\mu,\eta}[X;t]$ we have
\begin{align*}
  \qHL{}{\mu,\eta}[Y;t_{01}\dotsm t_{r-1,0}] &=
  \sum_{\lad\in\Y^{Q_0}} s_\lad[\Xd] \,t_{\hat{Q}_1}^{\lad-N\epsilon^{(r-1)}} \sum_{\la\in\Y}
  c^\la_{\lad} \qK{\la}{\mu,\eta}(t_{01}\dotsm t_{r-1,0}).
\end{align*}
Consider the map $\Td\mapsto \uu \mapsto (P(\uu),Q(\uu))$
where $\uu$ is the sequence of row words obtained from the
rows of $T^{(r-1)}$, then the rows of $T^{(r-2)}$,
up to the rows of $T^{(0)}$ and $(P(\uu),Q(\uu))$ is the column insertion Robinson-Schensted-Knuth tableau pair (see \S \ref{SS:column insertion}). 
By Theorem \ref{T:White} and Cor. \ref{C:RW} and their notation,
the image consists of all tableau pairs
$(P,Q)$ such that $Q$ is $D$-compatible where $D$ is the skew shape 
$\la^{(r-1)}*\dotsm*\la^{(0)}$. Restricting this bijection
to $(\mu,\eta)$-LR words, we obtain a bijection
\begin{align}
\LRtab{\lad}{\mu,\eta,i_1=0} &\to \bigsqcup_{\la} \LRtab{\la}{\mu,\eta} \times \{ \text{$D$-compatible tableaux of shape $\la$} \}.
\end{align}
Since Knuth equivalence preserves $\charge_{\mu,\eta}$ the result follows from Theorems \ref{T:main} and \ref{T:one node rectangles}.
\end{proof}

\section{Graded representations of wreath products}

In this section we discuss the meaning of our results in the representation theory of the wreath product group $\Ga_n=\Ga^n\rtimes S_n$ where $\Ga$ is a cyclic group of order $r$. We show in particular that the plethystic substitution of Corollary~\ref{C:HLpleth} is, on the level of Frobenius characteristics, a type of graded induction from $S_n$ to $\Ga_n$. It follows that the induction of the Garsia-Procesi module $R_\mu$ \cite{GP} results in a graded $\Ga_n$-module whose Frobenius characteristic is equal, up to the involution $\omega$ on symmetric functions, to a cyclic quiver parabolic Hall-Littlewood function.


\newcommand{\st}{\mathsf{t}}

We assume in this section that $t_{i,i+1}=\st$ for all $i\in Q_0$, with $\st$ serving as the grading parameter. (Note the the difference between this $\st$ and $t$ above which played the role of the product of all arrow variables $t=t_{0,1}t_{1,2}\dotsm t_{r-1,0}$.)

\subsection{Frobenius characteristics of graded $S_n$-modules}
Let $R(S_n)$ denote the Grothendieck ring of the category of graded $S_n$-modules $M=\oplus_{d\ge 0} M_d$ with $M$ finite-dimensional over $\C$. The graded Frobenius characteristic map
\begin{align*}
M\mapsto \Fr(M;\st)&=\frac{1}{n!}\sum_{w\in S_n}\tr_M(w;\st)p_{\type(w)}
\end{align*}
gives a linear isomorphism of $R(S_n)$ with degree $n$ symmetric functions over $\Z[\st]$.
Here we set $\tr_M(x;\st)=\sum_d \st^d\, \mathrm{\tr}_{M_d}(x)$ for any graded endomorphism $x$ of a finite-dimensional graded vector space $M=\oplus_{d\ge 0} M_d$. For a permutation $w\in S_n$, we let $\type(w)$ denote its cycle type, i.e., the partition of $n$ with parts given by the lengths of cycles in $w$. And for $\la\in\Y$, $p_\la=p_{\la_1}p_{\la_2}\dotsm$ is the corresponding power sum basis element in $\La$.

\subsection{The ring $\La^\Ga$}
We realize $\Ga$ as the group of $r$-th roots of unity and fix a generator $\zeta\in\C^\times$. Accordingly, we let $\La^\Ga$ denote the tensor power $\La^{\otimes Q_0}$ of symmetric functions with coefficients in $\C[\st]$. Let $\La^\Ga_n\subset\La^\Ga$ denote the $\C[\st]$-submodule of elements of total degree $n$.

There are two natural sets of power sum generators in $\La^\Ga$, one indexed by conjugacy classes in $\Ga$ and the other by irreducible representations of $\Ga$. In our present situation, we abuse notation and identify both index sets with $Q_0$. Our previously introduced $p^{(i)}_s$ for $i\in Q_0$ and $s\ge 0$ specializes under $t_{i,i+1}\equiv\st$ to the power sum in $\La^\Ga$ indexed by the irreducible $\Ga$-representation $\zeta\mapsto \zeta^i$; all previously introduced notation, such as the alphabets $X^{(i)}$, will refer to this set of generators. The $s$-th power sum indexed by the conjugacy class $\{\zeta^i\}$ will be denoted $\hp^{(i)}_s$. We have
\begin{align*}
\La^\Ga = \C[\st]\big[p^{(i)}_s : i\in Q_0, s\ge 0\big] = \C[\st]\big[\hp^{(i)}_s : i\in Q_0, s\ge 0\big]
\end{align*}
with $\deg p^{(i)}_s = \deg \hp^{(i)}_s = s$. The two sets of generators are related by the Fourier transform on $\Ga$ as follows:
\begin{align}\label{E:power-sum-fourier}
p^{(j)}_s=r^{-1}\sum_{i=0}^{r-1} \zeta^{ij}\hp^{(i)}_s.
\end{align}
For $\la^\bullet\in\Y^{Q_0}$ we set
\begin{align*}
p_{\la^\bullet}&=\prod_{i\in Q_0} p^{(i)}_{\la^{(i)}},\qquad
\hp_{\la^\bullet}=\prod_{i\in Q_0} \hp^{(i)}_{\la^{(i)}}
\end{align*}
where $p^{(i)}_{\la^{(i)}}=p^{(i)}_{\la^{(i)}_1}p^{(i)}_{\la^{(i)}_2}\dotsm$ as usual, and similarly for $\hp^{(i)}_{\la^{(i)}}$.

\subsection{Conjugacy in $\Ga_n$}
Suppose $g=(g_1,\dotsc,g_n,w)\in \Ga_n$. For a cycle $C=(ab \dotsm c)$ of $w\in S_n$, we write $g_C=g_ag_b\dotsm g_c$ and say that $C$ has color $i$ (with respect to $g$) if $g_C=\zeta^i$ where $0\le i<r$. We define the cycle type $\type^\bullet(g)$ to be the $Q_0$-tuple of partitions of total size $n$ given as follows: $\type^{(i)}(g)$ lists the sizes of cycles of $w$ having color $i$. This gives a bijection between conjugacy classes in $\Ga_n$ and $Q_0$-tuples of partitions of total size $n$.

\subsection{Frobenius characteristics of graded $\Ga_n$-modules}
Let $R(\Ga_n)$ be the complexified Grothendieck ring of the category of graded $\Ga_n$-modules $M=\oplus_{d\ge 0}M_d$ with $M$ finite-dimensional over $\C$.
In this setting, the Frobenius characteristic map
\begin{align*}
M &\mapsto \Fr^\Gamma(M;\st)=|\Ga_n|^{-1}\sum_{g\in \Gamma_n}\tr_M(g;\st)\hp_{\type^\bullet(g)}
\end{align*}
gives an isomorphism $R(\Ga_n)\cong\La^\Ga_n$ sending irreducible $\Ga_n$-modules to the tensor Schur functions (relative to the $p^{(i)}_s$) of degree $n$ \cite[I, Appendix B]{Mac}.

\subsection{Graded induction}

Let $S=\Sym(\C^n)=\C[x_1,\dotsc,x_n]$, with $\Ga_n$ acting by the natural algebra automorphisms given by $g\cdot x_k= g_{w(k)} x_{w(k)}$ for $g$ as above. We take the standard grading on $S$ with $\deg x_k=1$. Consider the $\Ga_n$-submodule
\begin{align}\label{E:pol-r-bounded}
S_{<r}&=\bigoplus_{0\le m_1,\dotsc,m_n<r}\C x_1^{m_1}\dotsm x_n^{m_n}.
\end{align}
Note that $S_{<r}\cong\C[\Ga^n]$ as ungraded $\Ga^n$-modules, with an isomorphism given by
\begin{align}\label{E:S-Ga-iso}
x_1^{m_1}\dotsm x_n^{m_n} &\mapsto \sum_{g=(g_1,\dotsc,g_n)\in\Ga^n}g_1^{-m_1}\dotsm g_n^{-m_n}\cdot g.
\end{align}

For any finite-dimensional graded $S_n$-module $M$ we define a graded $\Ga_n$-module $\Ind(M)$ as the following tensor product of $\Ga_n$-modules:
\begin{align}\label{E:ind-def}
\Ind(M)=S_{<r}\otimes_{\C} \tM.
\end{align}
Here $\tM=M$ as ungraded vector spaces, with the $\Ga_n$-action on $\tM$ trivially extended from that of $S_n$ on $M$; we choose the grading on $\tM$ which is dilated from that of $M$ by a factor of $r$, i.e., $\tM=\oplus_{d\ge 0}\tM_{rd}$ where $\tM_{rd}=M_d$. As ungraded $\Ga_n$-modules we clearly have $\Ind(M)\cong \Ind_{S_n}^{\Ga_n}(M)$.

\begin{rem}\label{R:ind-I}
In the case when $M=S/I$ is a graded $S_n$-module determined by an $S_n$-equivariant homogeneous ideal $I\subset S$, we have the following equivalent description of $\Ind(M)$. Let $J$ be the image of $I$ under the $r$-th power ring homomorphism given by $x_k\mapsto x_k^r$ on the generators of $S$. 
Then $SJ\subset S$ is a $\Ga_n$-equivariant homogeneous ideal and $\Ind(M)\cong S/SJ$ as graded $\Ga_n$-modules.
\end{rem}

\begin{prop}\label{P:frob-ind}
For any finite-dimensional graded $S_n$-module $M$ we have
\begin{align}\label{E:frob-ind}
\Fr^\Ga(\Ind(M);\st) &= \Fr(M;\st^r)[X^{(0)}+\st X^{(1)}+\dotsm+\st^{r-1}X^{(r-1)}].
\end{align}
\end{prop}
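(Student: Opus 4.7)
The plan is to compute $\Fr^\Ga(\Ind(M);\st)$ directly from the definition, exploiting the tensor factorization $\Ind(M) = S_{<r}\otimes_\C \tM$ of graded $\Ga_n$-modules. Because the grading on $\Ind(M)$ is the sum of the gradings on its two tensor factors, the graded character of any $g=(g_1,\dotsc,g_n,w) \in \Ga_n$ factors as $\tr_{\Ind(M)}(g;\st)=\tr_{S_{<r}}(g;\st)\cdot\tr_{\tM}(g;\st)$. Since $\Ga^n$ acts trivially on $\tM$ and the grading on $\tM$ is dilated by $r$ from that of $M$, the second factor collapses to $\tr_M(w;\st^r)$, depending only on the $S_n$-part $w$. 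Thus the entire computation reduces to the graded character of the polynomial representation $S_{<r}$.

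For $\tr_{S_{<r}}(g;\st)$ I would work in the monomial basis. The monomial $x_1^{m_1}\dotsm x_n^{m_n}$ is an eigenvector of $g$ (up to rescaling) precisely when $(m_1,\dotsc,m_n)$ is constant on each cycle $C$ of $w$, with eigenvalue $\prod_C g_C^{m_C}$ where $g_C=\prod_{k\in C} g_k$. Summing the resulting geometric series in $m_C\in\{0,\dotsc,r-1\}$ for each cycle gives
\[
\tr_{S_{<r}}(g;\st) = \prod_C \frac{1-\st^{r\,l(C)}}{1-g_C\,\st^{l(C)}}.
\]
Next, in the definition of $\Fr^\Ga$ I would fix $w\in S_n$ and sum over $g_\bullet=(g_1,\dotsc,g_n)\in\Ga^n$. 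Both the expression above and the weight $\hp_{\type^\bullet(g)}=\prod_C \hp^{(i_C)}_{l(C)}$ (where $i_C$ denotes the color of $C$, i.e.\ $g_C=\zeta^{i_C}$) depend on $g_\bullet$ only through the tuple $(g_C)_C$, and for each fixed such tuple there are $r^{n-c(w)}$ preimages in $\Ga^n$, where $c(w)$ is the number of cycles of $w$. Consequently the sum over $g_\bullet$ factors into a product over cycles.

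The crux of the argument --- and the step I expect to be the main technical obstacle --- is to identify the per-cycle expression
\[
\sum_{i=0}^{r-1}\frac{1-\st^{rl}}{1-\zeta^{i}\st^{l}}\,\hp^{(i)}_{l}
\]
with $r\cdot p_l[Y]$, where $Y=X^{(0)}+\st X^{(1)}+\dotsm+\st^{r-1}X^{(r-1)}$. My approach is to invert \eqref{E:power-sum-fourier} to write $\hp^{(i)}_s=\sum_{j=0}^{r-1}\zeta^{-ij}p^{(j)}_s$, swap the $i$- and $j$-sums, and evaluate $\sum_i \zeta^{-ij}/(1-\zeta^{i}\st^{l})$ by geometric expansion together with the orthogonality $\sum_i\zeta^{i(m-j)}=r\cdot[m\equiv j\bmod r]$. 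This collapses to $r\st^{lj}/(1-\st^{rl})$, cancelling the factor $1-\st^{rl}$ and leaving $r\sum_j \st^{lj}p^{(j)}_l=r\,p_l[Y]$, using that $\st$ is treated as a scalar in the plethysm so that $p_l[\st^j X^{(j)}]=\st^{lj}p^{(j)}_l$.

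Assembling the pieces, the $r^{n-c(w)}$ from the fiber size and the $r^{c(w)}$ from the $r$'s produced by the cycle factorization combine to cancel the $r^n$ in $|\Ga_n|^{-1}=(r^n n!)^{-1}$, yielding
\[
\Fr^\Ga(\Ind(M);\st)=\frac{1}{n!}\sum_{w\in S_n}\tr_M(w;\st^r)\prod_C p_{l(C)}[Y]=\frac{1}{n!}\sum_{w\in S_n}\tr_M(w;\st^r)\,p_{\type(w)}[Y],
\]
which equals $\Fr(M;\st^r)[Y]$ by the definition of the graded Frobenius characteristic, establishing \eqref{E:frob-ind}.
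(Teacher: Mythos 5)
Your proposal is correct and follows essentially the same route as the paper: both compute $\Fr^\Ga(\Ind(M);\st)$ directly via the factorization $\tr_{\Ind(M)}(g;\st)=\tr_{S_{<r}}(g;\st)\tr_M(w;\st^r)$, evaluate $\tr_{S_{<r}}(g;\st)$ in the monomial basis using monomials constant on cycles, count the fiber of size $r^{n-c(w)}$ over each choice of cycle colors, and convert $\hp^{(i)}_l$ to $p^{(j)}_l$ via \eqref{E:power-sum-fourier} to produce $p_{\type(w)}[Y]$. The only (cosmetic) difference is that you sum the per-cycle geometric series to a closed rational form and then re-expand it, whereas the paper keeps the finite sum over exponents $j_s$ and applies the Fourier relation directly.
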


\begin{proof}
We compute $\Fr^\Ga(\Ind(M);\st)$ directly.
By \eqref{E:ind-def} we have
\begin{align*}
\tr_{\Ind(M)}(g;\st)&=\tr_{S_{<r}}(g;\st)\tr_{\tM}(w;\st)\\
&=\tr_{S_{<r}}(g;\st)\tr_{M}(w;\st^r).
\end{align*}
for any $g=(g_1,\dotsc,g_n,w)\in\Ga_n$.

Now fix such a $g$ and let $C_1,\dotsc,C_\ell$ be the cycles of $w$ (in any fixed order) and $i_1,\dotsc,i_\ell$ their colors (with respect to $g$). By considering the matrix of $g$ with respect to the basis of $S_{<r}$ given by the monomials in \eqref{E:pol-r-bounded}, one finds\footnote{One may be tempted to compute $\tr_{S_{<r}}(g;\st)$ via the standard basis of $\C[\Ga^n]\cong S_{<r}$. However, this basis is not homogeneous with respect to \eqref{E:S-Ga-iso}. The ungraded computation immediately yields $\tr_{\C[\Ga^n]}(g;1)=\hp_{\tau(w)}^{(0)}$, which is seen to equal $p_{\tau(w)}[X^{(0)}+X^{(1)}+\dotsm+X^{(r-1)}]$ by \eqref{E:power-sum-fourier}.}
\begin{align*}
\tr_{S_{<r}}(g;\st) 
&=\sum_{0\le j_1,\dotsc, j_\ell<r}\zeta^{i_1j_1+\dotsm+i_\ell j_\ell}\st^{j_1|C_1|+\dotsm+j_\ell|C_\ell|}
\end{align*}
where $|C|$ denote the length of a cycle $C$.
Each summand above is a diagonal entry corresponding to a basis element of the form $x_{C_1}^{j_1}\dotsm x_{C_\ell}^{j_\ell}$, where $x_C=x_ax_b\dotsm x_c$ for a cycle $C=(ab\dotsm c)$; the other diagonal entries vanish.
Hence
\begin{align*}
r^{-n}\sum_{g_1,\dotsc,g_n\in\Gamma}\tr_{S_{<r}}(g;\st)\hp_{\type^\bullet(g)}
&=r^{-n}r^{(|C_1|-1)+\dotsm+(|C_\ell|-1)}\sum_{0\le i_1,\dotsc,i_\ell<r}\tr_{S_{<r}}(g;\st)\hp_{\type(g)}\\
&= \prod_{s=1}^\ell \sum_{0\le j_s<r}\sum_{0\le i_s <r} r^{-1}\zeta^{i_sj_s}\st^{j_s|C_s|}\hp_{|C_s|}^{(i_s)}\\
&= \prod_{s=1}^\ell \sum_{0\le j_s<r}\st^{j_s|C_s|} p_{|C_s|}^{(j_s)}\\
&= p_{\type(w)}[X^{(0)}+\st X^{(1)}+\dotsm+\st^{r-1}X^{(r-1)}].
\end{align*}
From this we immediately deduce \eqref{E:frob-ind}.
\end{proof}

\subsection{Induction of $R_\mu$ as a quiver Hall-Littlewood function} For a partition $\mu$ of size $n$, consider the ordinary parabolic Hall-Littlewood function $\mathcal{H}_{(1,\dotsc,1),\mu}$ indexed by the sequence of rectangles of sizes $\mu_k\times 1$ (i.e., columns of height $\mu_k$). By \cite[Example 2.3(2)]{SW}, we have $\mathcal{H}_{(1,\dotsc,1),\mu}=\omega\,\Fr(R_\mu;\st)$ where $R_\mu=S/I_\mu$ is the graded $S_n$-module of \cite{GP} and $\omega$ is the involution on $\La$ such that $\omega(s_\la)=s_{\la'}$ for the transpose $\la'$ of $\la$. 

By abuse of notation, we use $\omega$ also to denote the involution on $\La^\Ga$ given by the tensor power $\omega^{\otimes Q_0}$. Combining Corollary~\ref{C:HLpleth} and Proposition~\ref{P:frob-ind} we see that the Frobenius characteristic of $\Ind(R_\mu)$ is given by a cyclic quiver parabolic Hall-Littlewood function as follows:
\begin{cor}\label{C:Rmu-ind}
For any partition $\mu$ of size $n$ we have
\begin{align}
\omega \,\Fr^\Ga(\Ind(R_\mu);\st)
=
\mathcal{H}_{(1,\dotsc,1),\mu,i_1=0}[X^\bullet;t_{i,i+1}\equiv \st].
\end{align}
\end{cor}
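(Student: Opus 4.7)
The plan is to chain the previous results in the excerpt as follows. First, I apply Proposition~\ref{P:frob-ind} to $M = R_\mu$, giving
$\Fr^\Ga(\Ind(R_\mu);\st) = \Fr(R_\mu;\st^r)[X^{(0)} + \st X^{(1)} + \dotsm + \st^{r-1} X^{(r-1)}]$.
Next, I apply $\omega = \omega^{\otimes Q_0}$ and push it inside the plethystic substitution, turning $\Fr(R_\mu;\st^r)$ into $\omega\,\Fr(R_\mu;\st^r)$. By the identification $\omega\,\Fr(R_\mu;\st) = \mathcal{H}_{(1,\dotsc,1),\mu}[X;\st]$ from \cite{SW} (Example 2.3(2)), this equals the single-loop parabolic Hall-Littlewood function with parameter $\st^r$. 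Finally, I invoke Corollary~\ref{C:HLpleth} with $\eta = (1,\dotsc,1)$, $i_1 = 0$, and the specialization $t_{i,i+1} \equiv \st$ to identify the resulting plethystic expression with the cyclic quiver parabolic Hall-Littlewood function on the right-hand side.

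The routine technical step is to verify that $\omega^{\otimes Q_0}$ commutes with a plethystic substitution into a linear combination of scaled alphabets. This reduces to checking on power sums: one has $p_n[\sum_i \st^i X^{(i)}] = \sum_i \st^{ni}\, p^{(i)}_n$, and applying $\omega^{\otimes Q_0}$ multiplies each $p^{(i)}_n$ by $(-1)^{n-1}$, which matches evaluating $(\omega p_n) = (-1)^{n-1}p_n$ on the same alphabet. Multiplicativity and linearity extend this compatibility to all of $\La$.

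The main obstacle I expect is reconciling the two alphabets that show up, namely $A = X^{(0)} + \st X^{(1)} + \dotsm + \st^{r-1} X^{(r-1)}$ produced by Proposition~\ref{P:frob-ind} and $Y = \sum_{i=0}^{r-1} \st^{r-1-i} X^{(i)}$ produced by Corollary~\ref{C:HLpleth} under $t_{i,i+1} \equiv \st$. These are related by the reflection $i \mapsto r-1-i$ of $Q_0$ and differ on the nose when substituted into a generic symmetric function; the proof must account for this, either via a rotation/reflection symmetry of the specialized cyclic-quiver Hall-Littlewood functions or by adjusting the convention (choice of reference node) in one of the two propositions so that the alphabets literally coincide. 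Once this compatibility is nailed down, the stated identity is immediate from the chain of equalities above.
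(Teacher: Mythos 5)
Your proof follows exactly the route the paper takes: the paper's entire argument for this corollary is the one sentence ``Combining Corollary~\ref{C:HLpleth} and Proposition~\ref{P:frob-ind} we see that\dots'', so your chain (Proposition~\ref{P:frob-ind}, the identification $\omega\,\Fr(R_\mu;\mathsf{t})=\mathcal{H}_{(1,\dotsc,1),\mu}$ from [SW], Corollary~\ref{C:HLpleth}), together with the power-sum verification that $\omega^{\otimes Q_0}$ commutes with plethysm into a sum of scaled alphabets, is precisely what is intended, and that verification is correct. The alphabet mismatch you flag is genuine and the paper is silent about it: under $t_{i,i+1}\equiv\mathsf{t}$, Corollary~\ref{C:HLpleth} produces $Y=\sum_i\mathsf{t}^{r-1-i}X^{(i)}$ while Proposition~\ref{P:frob-ind} produces $\sum_i\mathsf{t}^{i}X^{(i)}$, and these give genuinely different elements of $\La^\Ga$ (already for $n=1$, $\mu=(1)$, $r=2$ one side is $s_1[X^{(0)}]+\mathsf{t}\, s_1[X^{(1)}]$ and the other is $\mathsf{t}\, s_1[X^{(0)}]+s_1[X^{(1)}]$). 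In particular your first suggested fix --- a reflection symmetry of the specialized quiver Hall--Littlewood function --- is not available: the $\mathsf{t}^0$ component of $\mathcal{H}_{\mu,\eta,i_1=0}[X^\bullet;t_{i,i+1}\equiv\mathsf{t}]$ is concentrated at node $r-1$, whereas the $\mathsf{t}^0$ component of $\Fr^\Ga(\Ind(M);\mathsf{t})$, coming from $M_0$ with trivial $\Ga^n$-action, is concentrated at node $0$. The identity therefore holds only after the relabeling $X^{(i)}\mapsto X^{(r-1-i)}$, i.e., one must adjust the identification of $Q_0$ with the characters of $\Ga$ in the wreath-product section (your second suggested fix). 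With that convention pinned down, your argument is complete and coincides with the paper's.
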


\begin{rem}
Generators for the defining ideal of $\Ind(R_\mu)$ are obtained simply by replacing $x_1,\dotsc,x_n$ in the the Tanisaki generators of $I_\mu$ \cite[I.5]{GP} by their $r$-th powers, thanks to Remark~\ref{R:ind-I}. In the case of $\mu=(1^n)$, $R_n=R_{(1^n)}$ and $\Ind(R_n)$ are the coinvariant algebras of $S_n$ and $\Ga_n$, respectively. Stembridge \cite[Theorem 6.6]{St} gives the graded $\Ga_n$-module structure of $\Ind(R_n)$. Chan and Rhoades \cite{CR} extend this to a generalized coinvariant algebra for $\Ga_n$ depending on an additional parameter $k$ such that $0\le k\le n$.
\end{rem}

\section{Proof of Theorem \ref{T:main}}\label{S:proof}

\subsection{Recurrence}
We recall a recurrence for the parabolic Kostka-Shoji polynomials
\cite[\S 4]{OS:newquiver}.

Let $(\bi,\ba,\mu(\bullet))$ denote the Lusztig data for $\mu,\eta,i_1$. So $\bi=(i_1,i_1+1,i_1+2,\dotsc)$,
$\ba$ consists of $\eta_1$ repeated $r-i_1$ times followed
by $\eta_k$ repeated $r$ times for $2\le k\le s$,
and $\mu(\bullet)$ is the sequence
starting with $r-i_1-1$ copies of $(0^{\eta_1})$ followed by
$(\mu_1^{\eta_1})$ and then for $2\le k\le s$,
$r-1$ copies of $(0^{\eta_k})$ followed by $(\mu_k^{\eta_k})$.

Let $(i_1,a_1,\mu(1))=(i,a,\nu)$. We have $\nu=(\mu_1^{\eta_1})$
if $i=r-1$ and $\nu=(0^{\eta_1})$ otherwise.
For $w\in S_n$ define $\alpha(w)\in X(GL_a)=\Z^a$ and
$\beta(w) \in X(GL_{n-a})=\Z^{n-a}$ by
\begin{align}\label{E:alphabeta}
w^{-1}(\la^{(i)}+\rho)-\rho = (\alpha(w),\beta(w))
\end{align}

Let $S_n^a$ be the set of minimum length coset representatives
for $S_n/(S_a \times S_{n-a})$. By \cite{OS:newquiver} we have
\begin{align}\label{E:rec not last no kostka}
\qK{\lad}{\mu,\eta,i_1}(\tQ) &=
\sum_{w\in S_n^a} (-1)^w t_{i,i+1}^{\alpha(w)} \sum_{\gd}
c^{\gamma^{(i+1)}}_{\la^{(i+1)},\alpha(w)} \qK{\gd}{\mu,\eta,i_1+1}(\tQ)
\end{align}
where $\gamma^{(j)}=\la^{(j)}$ for $0\le j < i$,
$\gamma^{(i)}=\beta(w)$, and $\gamma^{(i+1)}\in \Y_n$.

Let $\K{\gamma/\la}{\sigma}$ be the Kostka number,
the number of semistandard tableaux of shape $\gamma/\la$ and
weight $\sigma$. Using the Jacobi-Trudi formula for the skew
Schur function we have

\begin{align}\label{E:rec not last}
\qK{\lad}{\mu,\eta,i_1}(t_{Q_1}) &=
\sum_{\substack{w\in S_n \\ \gd \in \Y_n^{Q_0}}}
(-1)^w t_{i,i+1}^{|\alpha(w)|} \K{\gamma^{(i+1)}/\la^{(i+1)}}{\alpha(w)}
\K{\gamma^{(i)}}{\beta(w)} \qK{\gd}{\mu,\eta,i_1+1}(t_{Q_1})
\end{align}
where 
\begin{align}\label{E:gammalambda}
\gamma^{(j)}= \la^{(j)} \qquad \text{for $j\in Q_0\setminus\{i,i+1\}$.}
\end{align}

If $i_1=r-1$, the formula and proof is very similar.
We have $\nu=(\mu_1^{\eta_1})$ and obtain
\begin{align}\label{E:rec last}
\qK{\lad}{\mu,\eta,r-1}(t_{Q_1}) &=
\sum_{\substack{w\in S_n \\ \gd \in \Y_{n-a}^{Q_0}}}
(-1)^w t_{r-1,0}^{|\alpha(w)/\nu|} \K{\gamma^{(0)}/\la^{(0)}}{\alpha(w)-\nu}
\K{\gamma^{(r-1)}}{\beta(w)} \qK{\gd}{\hmu,\heta,0}(t_{Q_1})
\end{align}
such that \eqref{E:gammalambda} holds.

\begin{rem} The recurrence \eqref{E:rec not last no kostka} is more efficient than \eqref{E:rec last} but the cancelling bijection is much simpler to define for the latter.
\end{rem}

\subsection{Morris data}
Let $\tab{\gamma/\la}{\nu}$ be the set of semistandard tableaux
of shape $\gamma/\la$ and weight $\nu$.

A $(\lad;\mu,\eta,i_1)$-Morris datum is a tuple $\Xi=(w, \Sd, U^{(i)},U^{(i+1)})$ where
$w\in S_n$ is such that $\alpha(w) \ge \nu$ componentwise, $U^{(i)}\in \tab{\gamma^{(i)}}{\beta(w)}$, $U^{(i+1)}\in \tab{\gamma^{(i+1)}/\la^{(i+1)}}{\alpha(w)-\nu}$, and $\Sd\in \LRtab{\gd}{\mu,\eta,i_1+1}$ if $0\le i_1<r-1$ 
and $\Sd\in \LRtab{\gd}{\hmu,\heta,0}$ if $i_1=r-1$ 
with $\gd$ as in \eqref{E:gammalambda}. Define the sign and weight of $\Xi=(w,\Sd,U^{(i)},U^{(i+1)})$ by
\begin{align}
\label{E:signMorris}
\sgn(\Xi) &= (-1)^{\ell(w)} \\
\label{E:weightMorris}
\wt_{Q_1}(\Xi) &=
t_{i,i+1}^{|\alpha(w)/\nu|}
\times
\begin{cases}
\wt_{\mu,\eta}(\Sd) & \text{if $0\le i_1<r-1$}\\
\wt_{\hmu,\heta}(\Sd) & \text{if $i_1=r-1$.}
\end{cases}
\end{align} 

Write $\M{\lad}{\mu,\eta,i_1}$ for the set of $(\lad;\mu,\eta,i_1)$-Morris data. It suffices to find a sign-reversing weight-preserving involution $\Phi$
on $\M{\lad}{\mu,\eta,i_1}$ whose fixed point set has a weight-preserving bijection with $\LRtab{\lad}{\mu,\eta,i_1}$.

\subsection{Embedding LR multitableaux into Morris data}
We define a map 
\begin{align}
\LRtab{\lad}{\mu,\eta,i_1} &\overset{\iota}{\longrightarrow}  \M{\lad}{\mu,\eta,i_1} \\
\iota(\Td)&=(\id,\Sd,U^{(i)},U^{(i+1)})
\end{align}
as follows. 

Suppose first that $0\le i_1<r-1$.
Let $u_k$ be the $k$-th row of $T^{(i)}$, denoted $\row_k(T^{(i)})$,
for $1\le k\le a$
and let $S^{(i)}$ be the last $n-a$ rows of $T^{(i)}$ so that $T^{(i)} \equiv S^{(i)} u_a\dotsm u_1$.
Let $\Psi(u_a \dotsm u_1, T^{(i+1)}) = (S^{(i+1)},U^{(i+1)})$ where $\Psi$ is defined in \S \ref{SS:column insertion} via Schensted column insertion.
Let $S^{(j)}=T^{(j)}$ for $j\in Q_0\setminus\{i,i+1\}$. Then
\begin{align*}
T^{(0)} \dotsm\dotsm T^{(r-1)} &\equiv
S^{(0)} \dotsm S^{(i)} u_a\dotsm u_1 T^{(i+1)} S^{(i+2)}\dotsm S^{(r-1)} \\
&\equiv S^{(0)} \dotsm S^{(i)} S^{(i+1)}\dotsm S^{(r-1)}
\end{align*}
so that $\Sd$ is $(\mu,\eta)$-LR. For $0\le j<i$ we have
$S^{(j)}|_{A_1}=T^{(j)}|_{A_1}=\vn$ since $\Td\in\LRtab{\lad}{\mu,\eta,i_1}$. $S^{(i)}|_{A_1}=\vn$ because
$A_1$ consists of the smallest $a$ values and such values must occur in the first $a$ rows of a tableau of partition shape such as $T^{(i)}$.
These values are removed from the $i$-th tableau and put into the $(i+1)$-th tableau. It follows that $\Sd\in \LRtab{\gd}{\mu,\eta,i_1+1}$
where $\gd=\shape(\Sd)$.

If $i_1=r-1$ there are two additional issues: there is a copy of $Y_1$ that must be removed, and the action of the cyclic group on $(\mu,\eta)$-words must be employed to preserve the LR property. Since $\Td$ is $(\mu,\eta)$-LR and has no letters of $A_1$ except in $T^{(r-1)}$ it follows
that $T^{(r-1)}\supset Y_1$. Let $u_a \dotsm u_2 u_1$
be the first $a$ rows and $S^{'(r-1)}$ be the last
$n-a$ rows of $T^{(r-1)}$. Then for $1\le k\le a$, $u_k = k^{\mu_1} v_k$ for a row word (weakly increasing word) $v_k$. We have
\begin{align*}
T^{(0)} T^{(1)} \dotsm T^{(r-1)} &\equiv
  T^{(0)} T^{(1)} \dotsm T^{(r-2)} S^{'(r-1)} Y_1 v_a \dotsm v_1.
\end{align*}
This word is $(\mu,\eta)$-LR. Let us ``rotate" this word by $|v_a|+\dotsm + |v_1|$ positions to the right. By Lemma \ref{L:skew tableau preserving} there are tableaux
$T^{'(0)},S^{(1)},\dotsc,S^{(r-2)}$, $S^{(r-1)}$ of the same shapes
as $T^{(0)},\dotsc,T^{(r-2)},S^{'(r-1)}$ and row words $v'_k$ with $|v'_k|=|v_k|$ for $1\le k\le a$, such that
\begin{align*}
w_0^\eta (T^{(0)} T^{(1)} \dotsm T^{(r-2)} S^{'(r-1)} Y_1) &=
T^{'(0)} S^{(1)}\dotsm S^{(r-2)} S^{(r-1)} Y_1 \\
w_0^\eta (v_a \dotsm v_1) &= v'_a \dotsm v'_1.
\end{align*}
Define $\Psi(v'_a\dotsm v'_1,T^{'(0)})=(S^{(0)},U^{(0)})$. Then
\begin{align*}
v'_a \dotsm v'_1 T^{'(0)} S^{(1)}\dotsm S^{(r-1)} Y_1\equiv
S^{(0)}S^{(1)}\dotsm S^{(r-1)} Y_1
\end{align*}
is $(\mu,\eta)$-LR, and therefore
$\Sd\in\LRtab{\gd}{\hmu,\heta,0}$ for $\gd=\shape(\Sd)$. 

In either case define $U^{(i)}=\Yam{\shape(S^{(i)})}$ to be the tableau of shape $\shape(S^{(i)})$ filled with letter $k$ in each row $k$.
This completes the definition of $\iota$.

\begin{lem}\label{L:embedding charge}
For $\Td\in \LRtab{\lad}{\mu,\eta,i_1}$ we have
\begin{align}
\wt_{\mu,\eta}(\Td) &= \wt_{Q_1}(\iota(\Td)).
\end{align}
\end{lem}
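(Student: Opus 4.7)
The plan is to split according to the two cases $0\le i_1<r-1$ and $i_1=r-1$ in the definition of $\iota$. In both cases I will factor each side of the claimed identity as a $t_{\hat{Q}_1}$-monomial times a power of $(t_{0,1}\dotsm t_{r-1,0})^{\charge}$, and then verify each factor separately: the $t_{\hat{Q}_1}$-monomial via a bookkeeping computation of the shape change $\gd-\lad$, and the full-cycle monomial exponent via Proposition \ref{P:LR-charge}.

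In the case $0\le i_1<r-1$, the shape change is $\gd-\lad=-|\alpha(\id)|\,\alpha^{(i)}$ (where $|\alpha(\id)|$ equals the total size of the first $a=\eta_1$ rows of $\la^{(i)}$). Combined with $\nu=(0^{\eta_1})$ this yields a factor $t_{i,i+1}^{-|\alpha(\id)|}$ in $\wt_{\mu,\eta}(\Sd)$, cancelling exactly against the prefactor $t_{i,i+1}^{|\alpha(\id)/\nu|}=t_{i,i+1}^{|\alpha(\id)|}$. For the charge, $\word(\Td)\equiv\word(\Sd)$ because column insertion preserves the Knuth class of the concatenated word, so Proposition \ref{P:LR-charge}(2) gives $\charge_{\mu,\eta}(\Td)=\charge_{\mu,\eta}(\Sd)$. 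In the case $i_1=r-1$, set $V=\sum_k|v_k|$. The shape change $\gd-\lad$ now involves $\epsilon^{(0)}-\epsilon^{(r-1)}=\sum_{j=0}^{r-2}\alpha^{(j)}$ with coefficient $V$, contributing $(t_{0,1}\dotsm t_{r-2,r-1})^V$ to the shape monomial of $\wt_{\hmu,\heta}(\Sd)$; this combines with the prefactor $t_{r-1,0}^V=t_{r-1,0}^{|\alpha(\id)/\nu|}$ to produce an extra $(t_{0,1}\dotsm t_{r-1,0})^V$. The remaining charge identity $\charge_{\mu,\eta}(\Td)=V+\charge_{\hmu,\heta}(\Sd)$ follows from Proposition \ref{P:LR-charge}(3): setting $u=T^{(0)}\dotsm T^{(r-2)}S^{'(r-1)}$ and $v=v_a\dotsm v_1$, the Knuth equivalence $\word(\Td)\equiv uY_1v$ together with part (3) gives $\charge_{\mu,\eta}(\Td)=|v|+\charge_{\hmu,\heta}((w_0^\heta v)(w_0^\heta u))$, reducing the claim to the Knuth equivalence $(w_0^\heta v)(w_0^\heta u)\equiv\word(\Sd)$.

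The main obstacle is this final Knuth equivalence, which requires reconciling the $w_0^\heta$-action supplied by Proposition \ref{P:LR-charge}(3) with the $w_0^\eta$-action used in the construction of $\iota$. The key observation is that neither $u$ nor $v$ contains any letter from the block $A_1=\{1,\dots,\eta_1\}$: for $u$ by the concentration hypothesis on $\Td$, and for $v$ because each $v_k$ consists of entries strictly greater than $\eta_1$. Consequently the crystal actions of $w_0^\heta$ and $w_0^\eta$ agree on $u$ and on $v$. Combining this with the equalities $w_0^\eta u=T^{'(0)}S^{(1)}\dotsm S^{(r-1)}$ and $w_0^\eta v=v'_a\dotsm v'_1$ provided by Lemma \ref{L:skew tableau preserving}, together with the Knuth equivalence $v'_a\dotsm v'_1\,T^{'(0)}\equiv S^{(0)}$ arising from the column insertion that defines $S^{(0)}$, I obtain $(w_0^\heta v)(w_0^\heta u)=v'_a\dotsm v'_1\,T^{'(0)}S^{(1)}\dotsm S^{(r-1)}\equiv S^{(0)}S^{(1)}\dotsm S^{(r-1)}=\word(\Sd)$, completing the argument.
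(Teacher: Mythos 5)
Your proof is correct and follows essentially the same route as the paper's: the same case split on $i_1$, the same bookkeeping of the shape change $\gd-\lad$ against the prefactor $t_{i,i+1}^{|\alpha(w)/\nu|}$, and the same appeal to Proposition~\ref{P:LR-charge} (Knuth invariance in the first case, the recursion $\charge_{\mu,\eta}(uY_1v)=|v|+\charge_{\hmu,\heta}((w_0^\heta v)(w_0^\heta u))$ in the second). Your explicit reconciliation of the $w_0^\heta$ in the charge recursion with the $w_0^\eta$ used in the construction of $\iota$ (via the absence of $A_1$-letters in $u$ and $v$) is a detail the paper leaves implicit, and is a welcome addition rather than a deviation.
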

\begin{proof} Suppose $0\le i_1<r-1$. Here $\nu=\vn$. For $u=u_a\dotsm u_1$ as in the definition of $\iota$ we have $|u|=|\alpha(w)/\nu|$. Also $\word(\Td)\equiv \word(\Sd)$ so that
$\charge_{\mu,\eta}(\Td)=\charge_{\mu,\eta}(\Sd)$.
Letting $\gd=\shape(\Sd)$ and $N=\sum_{k=1}^s \mu_k\eta_k$ we have
\begin{align*}
\sum_{j\in Q_0} (|\la^{(j)}|-|\gamma^{(j)}|) \epsilon^{(j)} &=
|\alpha(w)/\nu| (\epsilon^{(i)}-\epsilon^{(i+1)}) 
\end{align*}
from which we deduce that $t_{\hat{Q}_1}^{\gd-N\epsilon^{(r-1)}} t_{i,i+1}^{|\alpha(w)/\nu|} = 
t_{\hat{Q}_1}^{\lad-N\epsilon^{(r-1)}}$
as required.

Suppose $i_1=r-1$. 
Let $\hat{N}=\sum_{k=2}^s \mu_k\eta_k$. We have $|\nu|=\mu_1\eta_1$, $|u|=|\alpha(w)/\nu|$, $|\la^{(r-1)}|-|\gamma^{(r-1)}|= |u| + \mu_1\eta_1$, $|\la^{(0)}| - |\gamma^{(0)}| = -|u|$ so that
\begin{align*}
-N\epsilon^{(r-1)}+\sum_{j\in Q_0} |\la^{(j)}| \epsilon^{(j)} &=
-\hat{N}\epsilon^{(r-1)}+\sum_{j\in Q_0} |\gamma^{(j)}|\epsilon^{(j)} + |\alpha(w)/\nu| (\epsilon^{(r-1)}-\epsilon^{(0)}).
\end{align*}
By Proposition~\ref{P:LR-charge} we have $\charge_{\mu,\eta}(\Td)=\charge_{\hmu,\heta}(\Sd)+|u|$. Therefore
\begin{align*}
t_{\hat{Q}_1}^{\lad-N\epsilon^{(r-1)}} &= 
t_{\hat{Q}_1}^{\gd-\hat{N}\epsilon^{(r-1)}} 
(t_{0,1}\dotsm t_{r-2,r-1})^{-|\alpha(w)/\nu|} \\
&= t_{r-1,0}^{|\alpha(w)/\nu|} t_{\hat{Q}_1}^{\gd-\hat{N}\epsilon^{(r-1)}}  (t_{0,1}\dotsm t_{r-1,0})^{-|\alpha(w)/\nu|} \\
&=  t_{r-1,0}^{|\alpha(w)/\nu|} t_{\hat{Q}_1}^{\gd-\hat{N}\epsilon^{(r-1)}}  (t_{0,1}\dotsm t_{r-1,0})^{\charge_{\hmu,\heta}(\Sd)-\charge_{\mu,\eta}(\Td)}
\end{align*}
as required.
\end{proof}

\subsection{Cancellation} Let $\Xi=(w,\Sd,U^{(i)},U^{(i+1)})\in \M{\lad}{\mu,\eta,i_1}$. We define a sign-reversing weight-preserving involution $\Phi$ on $\M{\lad}{\mu,\eta,i_1}$ with fixed point set the image of $\iota$. Let $\Phi(\Xi)=\tXi= (\tw,\tSd,\tU^{(i)},\tU^{(i+1)})$ with $\tXi$ to be specified.

The cancellation begins by trying to find a $\iota$-preimage for $\Xi$.

Suppose $0\le i<r-1$. Let
\begin{align}
\Psi^{-1}(S^{(i)},U^{(i)}) &=(u_n\dotsm u_{a+1},\vn) \\
\Psi^{-1}(S^{(i+1)},U^{(i+1)})&=(u_a\dotsm u_1,T^{(i+1)}).
\end{align}

Say that $\Xi$ has a violation if $u_n\dotsm u_1$ is not the row word factorization of a tableau of partition shape.
By Proposition~\ref{P:QYam}, this is equivalent to saying that the tableau $Q(\uu)$ of \S \ref{SS:column insertion} is not Yamanouchi. If $w\ne\id$ then $Q(\uu)$ cannot be Yamanouchi as its weight $(\alpha(w),\beta(w))$ is not dominant.

Suppose $\Xi$ does not have a violation. Then $w=\id$,
$u_n\dotsm u_1$ is the word of a tableau, say, $T^{(i)}$,
and $u_a\dotsm u_1$ and $u_n\dotsm u_{a+1}$ are also tableau words.
This implies that $U^{(i)}$ and $U^{(i+1)}$ are Yamanouchi
(but $U^{(i+1)}$ has skew shape).

Let $T^{(j)}=S^{(j)}$ for $j\in Q_0\setminus\{i,i+1\}$. 

Then $\iota(\Td)=(w,\Sd,U^{(i)},U^{(i+1)})$. Define $\Phi(\Xi)=\Xi$.

Otherwise suppose $\Xi$ has a violation. 
Take the rightmost letter $p+1$ in $\word(Q(\uu))$
that is $p$-unpaired. We define $\tw=ws_p$ and
\begin{align}
  \Psi(\tuu,\vn) &= (P(\uu), s_pe_p(Q(\uu))) \\
  \Psi(\tu_n\dotsm \tu_{a+1},\vn) &= (\tS^{(i)},\tU^{(i)}) \\
  \Psi(\tu_a \dotsm \tu_1, T^{(i+1)}) &= (\tS^{(i+1)}, \tU^{(i+1)})
\end{align}
and $\tS^{(j)}=S^{(j)}$ for $j\in Q_0\setminus\{i,i+1\}$.
Since $\word(\tSd)\equiv P(\uu)\equiv \word(\Sd)$ and $s_pe_p(Q(\uu))$ has weight $\tw(\lambda^{(i)}+\rho)-\rho$, it follows readily that $\tXi\in \M{\lad}{\mu,\eta,i_1}$ and 
$\wt_{Q_1}(\tXi)=\wt_{Q_1}(\Xi)$. Note that $\tXi$ has a violation due to Lemma~\ref{L:se involution}, with the rightmost $p$-unpaired letter $p+1$ playing the same role in $\word(Q(\tuu))$ as it did in $\word(Q(\uu))$. The latter ensures that the map $\Xi\to\tXi$ is an involution.


This defines $\tXi$ and establishes its properties for the case $0\le i_1<r-1$.

Suppose $i_1=r-1$. Let
\begin{align*}
\Psi^{-1}(S^{(0)},U^{(0)}) = (v_a\dotsm v_1, W^{(0)}).
\end{align*}
We have the $(\hmu,\heta)$-LR word
\begin{align*}
S^{(0)} S^{(1)} \dotsm S^{(r-1)} &\equiv
v_a \dotsm v_1 W^{(0)} S^{(1)} \dotsm S^{(r-1)}. 
\end{align*}
Let $v'_a \dotsm v'_1$ be the sequence of row words with $|v_k|=|v'_k|$ for $1\le k\le a$, and $W^{'(0)}$, $S^{'(1)}$, $\dotsc$, $S^{'(r-2)}$, $S^{'(r-1)}$ the tableaux of the same shapes as $W^{(0)}$, $S^{(1)}$, $\dotsc$, $S^{(r-1)}$ such that 
\begin{align*}
w_0^\eta(v_a\dotsm v_1)&= v'_a\dotsm v'_1 \\
w_0^\eta(W^{(0)} S^{(1)} \dotsm S^{(r-2)} S^{(r-1)}) &=
W^{'(0)} S^{'(1)} \dotsm S^{'(r-2)} S^{'(r-1)}.
\end{align*}
Then the word
\begin{align*}
W^{'(0)} S^{'(1)} \dotsm S^{'(r-1)} v'_a \dotsm v'_1
\end{align*}
is $(\hmu,\heta)$-LR. Let $u'_k = k^{\mu_1} v'_k$ for $1\le k\le a$.
Let $$\Psi^{-1}(S^{'(r-1)},U^{(r-1)})=(u'_n\dotsm u'_{a+1},\vn).$$
We arrive at $\uu'=(u'_n,\dotsc,u'_1)$. 

Say that $\Xi$ has a violation if $u_n'\dotsm u_1'$ is not the row word factorization of a tableau of partition shape.

Suppose $\Xi$ does not have a violation. As before,
$w=\id$, $u'_n\dotsm u'_1$ is the reading word of a tableau $T^{(r-1)}$. We have $T^{(r-1)}=P(u'_n\dotsm u'_1)=P(S^{'(r-1)}u'_a\dotsm u'_1)$. Letting $T^{(0)}=W^{'(0)}$,
$T^{(j)}=S^{'(j)}$ for $1\le j\le r-2$ we have
\begin{align*}
T^{(0)} T^{(1)} \dotsm T^{(r-2)} T^{(r-1)}  u'_a \dotsm u'_1&\equiv
W^{'(0)} S^{'(1)} \dotsm S^{'(r-2)} S^{'(r-1)} Y_1 v'_a \dotsm v'_1
\end{align*}
which is $(\mu,\eta)$-LR. As before we verify that $\iota(\Td)=\Xi$ and declare that $\Phi(\Xi)=\Xi$.

Suppose $\Xi$ has a violation. The index $p$ and $\tw$ are defined as before. Define
\begin{align}
\Psi(\tuu',\vn) &= (P(\uu'), s_pe_p(Q(\uu'))) \\
\Psi(\tu_n'\dotsm \tu_{a+1}',\vn) &= (\tS^{'(r-1)},\tU^{(r-1)}).
\end{align}

\begin{lem} For $1\le k \le a$, we have $\tu_k' = k^{\mu_1} \tv_k'$ for row words $\tv_k'$.
\end{lem}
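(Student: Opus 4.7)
The plan is to verify that the crystal reflection $s_p e_p$, applied to $Q(\uu')$, is compatible with the $Y_1$-prefix structure of $\uu'$, so that reverse column-insertion to $\tuu'$ preserves the factorization $\tu'_k=k^{\mu_1}\tv'_k$ for $1\le k\le a$.

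I would first identify, for each $k\in[1,a]$, the set of $Y_1$-cells of $(P(\uu'),Q(\uu'))$ corresponding to the $\mu_1$ copies of $k$ at the start of $u'_k$. Under column insertion of the weakly increasing row $u'_k$, these are cells labeled $k$ in $Q(\uu')$ carrying the value $k$ in $P(\uu')$, and they are singled out as the first $\mu_1$ cells contributed by $u'_k$ in the canonical insertion order. The desired claim $\tu'_k=k^{\mu_1}\tv'_k$ is then equivalent to saying that the $Y_1$-cells of $(P(\tuu'),Q(\tuu'))$ coincide with those of $(P(\uu'),Q(\uu'))$, since $P(\tuu')=P(\uu')$.

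Next I would analyze $s_p e_p$ on $Q(\uu')$. For $p\ge a$, neither $p$ nor $p+1$ appears among the $Y_1$-cell labels, so $s_p e_p$ trivially fixes them. For $p<a$ the argument uses the bracket matching defining $e_p$ on the $(p,p+1)$-subword of $\word(Q(\uu'))$: the staircase placement of the $\mu_1$ $Y_1$-cells labeled $p$ and the $\mu_1$ labeled $p+1$ pairs them off with each other in the matching, so the rightmost $p$-unpaired $p+1$ sits outside the $Y_1$-cells and $e_p$ leaves them fixed. The same conclusion holds for the full reflection $s_p e_p$. The positivity $\alpha(w)\ge\nu=(\mu_1^a)$ built into the Morris datum, and preserved under $\tw=ws_p$, supplies the counts needed for this matching to succeed.

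Given this invariance, reverse column-insertion from $(P(\uu'),s_pe_pQ(\uu'))$ extracts the $\mu_1$ $Y_1$-cells labeled $k$ as the first $\mu_1$ letters of $\tu'_k$, each of value $k$, yielding $\tu'_k=k^{\mu_1}\tv'_k$; the suffix $\tv'_k$ is a row word by the tableau structure of $Q(\tuu')$ restricted to label $k$. The main obstacle is justifying the bracket-matching step when $p<a$: one must ensure that $p$- and $(p{+}1)$-labels arising outside the $Y_1$-region, either from the $v'_k$-portions or from the rows with index $>a$, do not consume any of the $\mu_1$ $Y_1$-pairs. This requires careful accounting using the reading convention of the Appendix together with the explicit staircase placement of the $Y_1$-labels in $Q(\uu')$.
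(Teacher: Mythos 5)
The heart of this lemma is exactly the step you label ``the main obstacle'' and then leave unproved: that the bracket matching on the $(p,p+1)$-subword of $\word(Q(\uu'))$ pairs the $\mu_1$ prefix cells labeled $p$ with the $\mu_1$ prefix cells labeled $p+1$, so that $s_pe_p$ leaves them untouched. As written this is a restatement of the lemma on the recording-tableau side rather than a proof, and the supporting structural assertions (that the cells created by inserting the prefix $k^{\mu_1}$ of $u'_k$ are precisely the cells labeled $k$ in $Q(\uu')$ carrying value $k$ in $P(\uu')$, and that these sit in a ``staircase'' forcing the pairing) are themselves unverified. Your dismissal of the range $p\ge a$ is also incorrect at $p=a$: the label $a$ \emph{is} a prefix label, and $s_ae_a$ can relabel cells $a\mapsto a+1$; since every letter of $\uu'$ equal to $a$ lies in the prefix of $u'_a$, one must genuinely rule out a letter $a$ migrating into $\tu'_{a+1}$, and the paper treats $p=a$ as a separate case requiring the same argument as $p<a$. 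Finally, invoking $\alpha(\tw)\ge\nu$ for $\tw=ws_p$ as an input is circular: that inequality for $\tw$ (equivalently $|\tu'_k|\ge\mu_1$ for $k\le a$) is part of what the lemma is needed to establish so that $\tXi$ is again a Morris datum.

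The paper closes this gap by avoiding the $Q$-side bookkeeping altogether. By Proposition~\ref{P:dual crystal jeu}, $s_p^*e_p^*$ changes only the pair $(u'_{p+1},u'_p)$ and is computed by two-row jeu de taquin on the skew tableau with lower row $u'_{p+1}$ and upper row $u'_p$ (Example~\ref{X:jeu}). Because every letter other than the $\mu_1$ letters $p$ and the $\mu_1$ letters $p+1$ is greater than $a\ge p+1$, a short local argument shows no $p$ can slide from row $p$ to row $p+1$ (this would force the two rows to form a straight shape, which never occurs during the slides) and no $p+1$ can slide up (just before such a move the hole would have a $p$ to its left and a $p+1$ below, contradicting semistandardness given the letter counts); the case $p=a$ is handled by the same reasoning. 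If you want to keep your approach you would have to prove the analogous positional statement about $Q(\uu')$, which is at least as much work as the paper's direct argument on the row words; translating the crystal operation back to the rows via Proposition~\ref{P:dual crystal jeu} is the shorter route.
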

\begin{proof} Rather than acting on the $Q$ tableaux 
by $s_pe_p$ we pass between the sequences of row words
$u'_n\dotsm u'_1$ and $\tu'_n\dotsm \tu'_1$ 
by Proposition \ref{P:dual crystal jeu}, acting directly on the
sequences of row words using the dual crystal operator $s_p^* e_p^*$. See also Example \ref{X:jeu}.
We have $\tu_{p+1}'\tu_p'\equiv u_{p+1}u_p$
and $\tu_j'=u_j'$ for $j\notin \{p,p+1\}$. There is nothing to prove
if $p>a$. Suppose that $1\le p<a$. We have
$u'_{p+1} u'_p = (p+1)^{\mu_1} v'_{p+1} p^{\mu_1} v'_p$
where $v'_p$ and $v'_{p+1}$ contain only letters greater than $a$.
Consider the two row skew tableau with lower row $u'_{p+1}$ and upper row $u'_p$, having maximum overlap (that is, with the rows partially slid over each other so as to make a skew tableau using the minimum total number of columns). 

Suppose a letter $p$ moves from row $p$ to row $p+1$ during the computation of $s_p^* e_p^*$. This can only happen if the two rows
start in the same column. This would be a two row
tableau of partition shape, which we need never encounter when
computing $s_p^*e_p^*$, yielding a contradiction.
A letter $p+1$ can never move from row $p+1$ to row $p$:
to do so, right before the $p+1$ moves up the tableau must look like
$$
\tableau[sty]{\dotsm&p&\bullet&\dotsm\\\dotsm&q&q&\dotsm}
$$
where we write $q$ for $p+1$ and $\bullet$ for the moving ``hole" in the jeu-de-taquin. This is a contradiction to semistandardness since
there are exactly $\mu_1$ $p$'s (all in the upper row to the left of the hole) and $\mu_1$ $(p+1)$'s (all in the lower row with one of them located just below the hole) and all other letters are greater than $p+1$.

The remaining case is $p=a$. One must show that
in passing from $u'_{p+1} u'_p$ to $\tu'_{p+1} \tu'_p$
no letter $p$ goes from the $p$-th row to the $(p+1)$-th.
The proof is as in the previous case.
\end{proof}

Let 
\begin{align}
\label{E:sameW}
\tW^{'(0)}&=W^{'(0)} \\
\label{E:sameS}
\tS^{'(j)}&=S^{'(j)}\qquad\text{for $1\le j\le r-2$.}
\end{align}

For $1\le k\le a$ let $\tv_k$ be the row word such that
$|\tv_k|=|\tv'_k|$ and let $\tW^{(0)}$, $\tS^{(1)}$,$\dotsc$,
$\tS^{(r-1)}$ be the tableaux of the same shapes
as $\tW^{'(0)}$, $\tS^{'(1)}$, $\dotsc$, $\tS^{'(r-1)}$ such that 
\begin{align}
w_0^\eta(\tv_a' \dotsm \tv_1') &= \tv_a \dotsm \tv_1 \\
w_0^\eta(\tW^{'(0)}\tS^{'(1)}\dotsm \tS^{'(r-2)} \tS^{'(r-1)}) &=
\tW^{(0)} \tS^{(1)} \dotsm \tS^{(r-1)}.
\end{align}
Let
\begin{align}
\Psi(\tv_a \dotsm \tv_1, \tW^{(0)}) &= (\tS^{(0)}, \tU^{(0)})
\end{align}
It is straightforward to check that 
$\tXi=(\tw,\tSd,\tU^{(r-1)},\tU^{(0)})\in \M{\lad}{\mu,\eta,r-1}$. Again, $\tXi$ has a violation by Lemma~\ref{L:se involution} and by our choice of $p$ the map $\Xi\to\tXi$ is an involution.

We now verify that $\wt_{Q_1}$ is preserved.
Let $v=v_a \dotsm v_1$ and $\tv=v'_a\dotsm v'_1$. We have
\begin{align*}
\charge_{\hmu,\heta}(\Sd) + |v| &= 
\charge_{\mu,\eta}(W^{'(0)}S^{'(1)}\dotsm S^{'(r-1)} v') \\
&= \charge_{\mu,\eta}(\tW^{'(0)}\tS^{'(1)}\dotsm \tS^{'(r-1)} \tv') \\ 
&= \charge_{\hmu,\heta}(\tSd) + |\tv'|
\end{align*}
\begin{align*}
|\gamma^{(r-1)}| &= |\shape(S^{(r-1)})| = |\shape(S^{'(r-1)})| \\
&= \sum_{k=a+1}^n |v'_k| = |Q(\uu')| - \sum_{k=1}^a |v'_k|.
\end{align*}
Similarly $|\tg^{(r-1)}| = |Q(\tu')| - \sum_{k=1}^a |\tv'_k|$.
But $|Q(\uu')| = |Q(\tu')|$ so
$|\gamma^{(r-1)}|-|\tg^{(r-1)}| = |\tv|-|v|$. Similarly
$|\gamma^{(0)}|-|\tg^{(0)}| = |v|-|\tv|$.
One now readily verifies $\wt_{\hmu,\heta}(\Sd)=\wt_{\hmu,\heta}(\tSd)$.

\section{On the catabolizable tableau conjecture of \cite{OS:newquiver}}
In \cite{OS:newquiver} a tableau conjecture was given for
the Kostka-Shoji polynomial for any quiver such that every vertex has in-degree at most one and out-degree at most one.
This applies to the cyclic quiver.
We explain how this conjecture holds when the above conditions
intersect with the conditions of Theorem \ref{T:main}.
In this section we assume the Lusztig data is even, periodic, Borel,
but not necessarily concentrated at node $r-1$. In particular let $\eta_k=1$ for all $i$. Such Lusztig data is parametrized by
$\mud\in\Y_n^{Q_0}$: $\bi$ is $(0,1,2,\dotsc,r-1)$ repeated $n$ times,
$\ba$ is $1$ repeated $rn$ times, and $\mu(\bullet)$ is the sequence of single row partitions of the following sizes:
\begin{align*}
\mu_1^{(0)},\mu_1^{(1)},\dotsc,\mu_1^{(r-1)},
\mu_2^{(0)},\mu_2^{(1)},\dotsc,\mu_2^{(r-1)},\dotsc,
\mu_n^{(0)},\mu_n^{(1)},\dotsc,\mu_n^{(r-1)}.
\end{align*}
Denote by $\qHL{}{\mud}[\Xd,\tQ]$ the corresponding cyclic quiver Hall-Littlewood function and by $\qK{\lad}{\mud}(\tQ)$
its coefficient at $s_\lad$, with $\qKred{\lad}{\mud}(t)$
defined analogously.

\subsection{Single row catabolism}
For a word (or tableau or multitableau) $u$ let $|u|_k$ be the number of times the letter $k$ appears in $u$. For a tableau $T$ let $\hat{T}$ denote the tableau with its first row
(denoted $\row_1(T)$) removed. Similarly for $\la\in\Y$ let $\hat{\la}=(\la_2,\la_3,\dotsc)$ be the partition $\la$ with its first part removed.

Given a multitableau $\Td$, a node $i\in Q_0$ and nonnegative integer $p$, we say that $\Td$ admits $\cat^{(i)}_p$ if 
$|\row_1(T^{(i)})|_1 \ge p$.
Suppose this is so: let $\row_1(T^{(i)})=1^p v$ for a row word $v$.
For $r>1$ define $\cat^{(i)}_p(\Td)$ to be the multitableau
$\Sd$ with $S^{(j)}=T^{(j)}$ for $j\in Q_0\setminus \{i,i+1\}$,
$S^{(i)}=\hat{T}^{(i)}$, and $T^{(i+1)}=P(v T^{(i+1)})$ where
$P$ is the Schensted $P$-tableau (see \S \ref{SS:Knuth}).
For $r=1$ we have a single tableau, $i=0$, and 
$\cat^{(0)}_p(T^{(0)}) = P(v \hat{T}^{(0)})$.

\begin{ex} Let $r=2$, $\lad=((6,4,2,0),(3,2,1,0))$, $\mu=(5, 5, 4, 4)$, and $\Td\in T(\lad,\mu)$ given by
	$$
	\Td = \tableau[sty]{1&1&2&2&3&3\\2&2&4&4\\4&4} \otimes \tableau[sty]{1&1&1\\2&3\\3}
	$$
	Any multitableau admits $\cat_0^{(0)}$. Applying this to $\Td$,
	the first row is removed from the $0$-th tableau and is column inserted into the $1$-th tableau.
	$$
	\tableau[sty]{2&2&4&4\\4&4} \otimes \tableau[sty]{1&1&2&2&3&3} \cdot \tableau[sty]{1&1&1\\2&3\\3} \equiv
	$$
	$$
	\tableau[sty]{2&2&4&4\\4&4} \otimes  \tableau[sty]{1&1&1&1&1&3&3&3\\2&2&2\\3}
	$$
	The resulting multitableau admits $\cat_5^{(1)}$.
	The first row is removed from the $1$-th tableau, 5 of its ones are removed, and the rest of the word is column inserted into the $0$-th tableau:
	$$
	\tableau[sty]{3&3&3} \cdot \tableau[sty]{2&2&4&4\\4&4} \otimes  \tableau[sty]{2&2&2\\3}\equiv
	$$
	$$
	\tableau[sty]{2&2&3&4&4&4&4\\3&3} \otimes  \tableau[sty]{2&2&2\\3} = \cat_5^{(1)} \cat_0^{(0)}(\Td).
	$$
\end{ex}

Let $\Td$ be a multitableau of shape $\lad$ and $d^\bullet\in\Z_{\ge0}^{Q_0}$ an effective dimension vector. The $d^\bullet$-\textit{cascading catabolism} of $\Td$ is defined by
\begin{align}\label{E:big ccat}
\ccat_{d^\bullet}(\Td) = 
\cat_{d^{(r-1)}}^{(r-1)} \cat_{d^{(r-2)}}^{(r-2)}\dotsm \cat_{d^{(1)}}^{(1)} \cat_{d^{(0)}}^{(0)}(\Td).
\end{align}
Of course this need not be defined as the requisite ones may not be present. 

The multipartition $\mud\in\Y_n^{Q_0}$ can also be regarded as a sequence of $n$ effective dimension vectors $\mu_1^\bullet, \mu_2^\bullet,\dotsc,\mu_n^\bullet$.
Say that the multitableau $\Td$ is $\mud$-cascade catabolizable if $|T^\bullet|_k=|\mu_k^\bullet|$ for all $1\le k\le n$ and $\Td$ admits the composition of operators
$\ccat_{\mu^\bullet_n} \dotsm \ccat_{\mu^\bullet_2} \ccat_{\mu^\bullet_1}$ (which will necessarily produce the empty multitableau when applied to $T^\bullet$). Here $\ccat_{\mu^\bullet_k}$ is acting on a multitableau containing letters $k$ through $n$
and is removing letters $k$.

Let $\CT(\lad,\mud)$ be the set of $\mud$-cascade catabolizable multitableaux of shape $\lad$.

\begin{conj} \label{CJ:tab} \cite{OS:newquiver}
\begin{align}
	\qKred{\lad}{\mud}(t) = \sum_{T\in \CT(\lad,\mud)} t^{\charge(\Td)}
\end{align}
\end{conj}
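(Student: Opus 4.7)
The plan is to derive the conjecture in the stated intersection case directly from Theorem~\ref{T:main}, via a charge-preserving identification of the two tableau sets. Throughout, the Lusztig datum is even periodic Borel (so in the notation of Theorem~\ref{T:main}, $\eta=(1,1,\dotsc,1)$ and $i_1=0$) and concentrated at node $r-1$ (so $\mu_k^{(i)}=0$ for $i<r-1$ and $\mu_k^{(r-1)}=\mu_k$).

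First I would unpack the LR side. In the Borel regime each $A_k$ is a singleton, and the subword $u|_{A_k}$ consists only of the letter $k$; the LR condition $u|_{A_k}\equiv Y_k = k^{\mu_k}$ therefore reduces to the content condition that $k$ appears exactly $\mu_k$ times in $u$. Consequently $\LRtab{\lad}{\mu,\eta,0}$ is the set of multitableaux of shape $\lad$ whose concatenated content is $\mu$. The content constraint $|T^\bullet|_k=\mu_k$ is also explicit in the definition of $\CT(\lad,\mud)$, so what remains is to verify that, in this intersection case, cascade catabolizability is automatic on multitableaux of the correct content.

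Second, I would show that every such $\Td$ admits the full cascade. A single sweep $\ccat_{\mu_k^\bullet}$ decomposes as $\cat^{(0)}_0,\cat^{(1)}_0,\dotsc,\cat^{(r-2)}_0$ (each with vacuous precondition, so always defined) followed by $\cat^{(r-1)}_{\mu_k}$, which requires at least $\mu_k$ copies of the smallest remaining letter $k$ in the top row of the transformed $T^{(r-1)}$. The intermediate Schensted insertions funnel every occurrence of $k$ into the current $T^{(r-1)}$, and by semistandardness these $\mu_k$ copies occupy the leftmost cells of its top row; after $\cat^{(r-1)}_{\mu_k}$ discards them, the result is a multitableau of content $(\mu_{k+1},\dotsc,\mu_n)$ on the alphabet $\{k+1,\dotsc,n\}$. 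Induction on $k$ yields $\CT(\lad,\mud) = \LRtab{\lad}{\mu,\eta,0}$ as sets.

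Third, I would match the cascade-catabolism charge of \cite{OS:newquiver} with $\charge_{\mu,\eta}$. The initial sweep $\cat^{(0)}_0,\dotsc,\cat^{(r-2)}_0$ preserves the Knuth class of the total reading word, and its output has a concatenated reading word of the form $u\,Y_k\,v$, where $u= W^{(0)}\dotsm W^{(r-2)}\hat{W}^{(r-1)}$ and the top row of the current $W^{(r-1)}$ is $1^{\mu_k}v$. The final step $\cat^{(r-1)}_{\mu_k}$ discards $Y_k$ and Schensted-inserts $v$ into $W^{(0)}$, yielding a multitableau with word Knuth-equivalent to $vu$. Since $\eta_1=1$ the long element $w_0^\heta$ is trivial, and Proposition~\ref{P:LR-charge}(3), combined with Knuth-invariance of charge (property (2)), gives
\begin{align*}
\charge_{\mu,\eta}(\word(\Td)) = |v| + \charge_{\hmu,\heta}(\word(\ccat_{\mu_k^\bullet}(\Td))).
\end{align*}
Iterating over $k$ and comparing to the recursive definition of the catabolism charge in \cite{OS:newquiver} shows $\charge(\Td) = \charge_{\mu,\eta}(\word(\Td))$ for every $\Td\in\CT(\lad,\mud)$, and the conjecture in the intersection case follows immediately from Theorem~\ref{T:main}.

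The main obstacle, I expect, is the third step: although each individual $\cat^{(i)}_p$ is manifestly Knuth-preserving, the factor $Y_k$ peeled off by the cascade sits inside the top row of the transformed $W^{(r-1)}$ (which depends on the preceding sweeps), not the original $T^{(r-1)}$, so the factorization $u\,Y_k\,v$ must be pinned down carefully in order to legitimately invoke Proposition~\ref{P:LR-charge}(3). Once this identification is justified sweep by sweep, the $|v|$ contributions line up with the catabolism charge and everything reduces to Theorem~\ref{T:main}.
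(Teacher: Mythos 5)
Your proposal is correct and follows essentially the same route as the paper: both reduce to Theorem~\ref{T:main} by showing that in the Borel, node-$(r-1)$-concentrated case the cascade-catabolizability condition is vacuous, so that $\CT(\lad,\mud)=T(\lad,\mu)=\LRtab{\lad}{\mu,\eta,0}$ (the paper does this via Lemmas~\ref{L:ccat} and \ref{L:remove all at end}, you by a direct sweep-by-sweep argument). Your third step, explicitly matching the catabolism charge of \cite{OS:newquiver} with $\charge_{\mu,\eta}$ via Proposition~\ref{P:LR-charge}(3), is more detailed than the paper, which leaves this identification implicit.
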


\begin{thm} \label{T:tab} 
Conjecture \ref{CJ:tab} holds
when $\mud$ has the special form $(\emptyset^{r-1},\mu)$ for some $\mu\in\Y_n$, that is, $\mu^{(i)}$ is empty for $i\in Q_0\setminus\{r-1\}$ and $\mu^{(r-1)}$ is the partition $\mu$.
\end{thm}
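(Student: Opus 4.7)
The plan is to deduce Theorem \ref{T:tab} from Theorem \ref{T:main}. The even, periodic, Borel Lusztig datum attached to $\mud=(\emptyset^{r-1},\mu)$ coincides with the dominant, periodic, balanced, rectangular Lusztig datum concentrated at $r-1$ associated with the triple $(\mu,\eta,i_1)=(\mu,(1,\dotsc,1),0)$ of Section \ref{SS:Lusztig data}, since both descriptions place the single row $(\mu_k)$ at node $r-1$ on the $k$-th pass and zero weights elsewhere. Thus Theorem \ref{T:main} applies directly and yields
\begin{align*}
\qKred{\lad}{\mud}(t)=\sum_{\Td\in\LRtab{\lad}{\mu,(1,\dotsc,1),0}} t^{\charge_{\mu,(1,\dotsc,1)}(\Td)}.
\end{align*}
It therefore suffices to identify the right-hand side with the cascade catabolism generating function of Conjecture \ref{CJ:tab}.

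The next step is to show $\LRtab{\lad}{\mu,(1,\dotsc,1),0}=\CT(\lad,\mud)$ as sets. In the Borel case each subalphabet $A_k=\{k\}$ is a singleton, so the condition $\word(\Td)|_{A_k}\equiv Y_k=k^{\mu_k}$ collapses to the content condition $|\word(\Td)|_k=\mu_k$, while the requirement $T^{(i)}|_{A_1}=\vn$ for $0\le i<i_1=0$ is vacuous. Hence $\LRtab{\lad}{\mu,(1,\dotsc,1),0}$ is just the set of multitableaux of shape $\lad$ and weight $\mu$. Any $\mud$-cascade catabolizable multitableau automatically has weight $\mu$. Conversely, I argue by induction on the length of $\mu$ that every shape-$\lad$ multitableau $\Td$ of weight $\mu$ is $\mud$-cascade catabolizable: the first cascade $\ccat_{\mu^\bullet_1}$, with $\mu^\bullet_1=(0,\dotsc,0,\mu_1)$, consists of $\cat^{(i)}_0$ for $0\le i\le r-2$ (each step unconditionally removes the first row of the current $T^{(i)}$ and $P$-inserts it into $T^{(i+1)}$) followed by $\cat^{(r-1)}_{\mu_1}$. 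Since every copy of $1$ in a partition-shaped tableau lies in the first row, and row insertion of a $1$ always lands in the first row by bumping a larger letter, after the $r-1$ preparatory steps all $\mu_1$ copies of the letter $1$ accumulate in the first row of the new $T^{(r-1)}$; thus $\cat^{(r-1)}_{\mu_1}$ is applicable and removes them. The residual multitableau has weight $\hat\mu$ in the letters $\{2,\dotsc,n\}$, so the inductive hypothesis completes the argument.

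Finally, I match the charges. In the Borel case, Proposition \ref{P:LR-charge} specializes to $\charge_{\mu,(1,\dotsc,1)}(uY_1v)=|v|+\charge_{\hat\mu,(1,\dotsc,1)}(vu)$ with $Y_1=1^{\mu_1}$, recovering the Lascoux-Sch\"utzenberger charge of $\word(\Td)$. The charge of \cite{OS:newquiver} appearing in Conjecture \ref{CJ:tab} is defined recursively along the cascade catabolism, each $\ccat_{\mu^\bullet_k}$ contributing the number of letters transported by its $P$-tableau insertions. For $\mud=(\emptyset^{r-1},\mu)$, the contribution of $\ccat_{\mu^\bullet_1}$ must match $|v|$ above, using that each $\cat^{(i)}_0$ preserves the Knuth class of the running word and that the accumulated block $1^{\mu_1}$ at the end of the cascade plays the role of $Y_1$. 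The main obstacle is precisely this charge alignment: although both statistics satisfy parallel recurrences, matching per-step increments requires tracking how the iterated row insertions of $\cat^{(i)}_0$ interact with the cyclic word rotation underlying Proposition \ref{P:LR-charge}. Once verified, induction gives $\charge(\Td)=\charge_{\mu,(1,\dotsc,1)}(\Td)$ for all $\Td$ in the common set, and the two sides of Conjecture \ref{CJ:tab} agree term by term.
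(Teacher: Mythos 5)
Your proposal follows essentially the same route as the paper: Theorem \ref{T:tab} is deduced from Theorem \ref{T:main} by observing that the two Lusztig data coincide and that both $\LRtab{\lad}{\mu,(1,\dotsc,1),0}$ and $\CT(\lad,\mud)$ reduce to the set $T(\lad,\mu)$ of multitableaux of shape $\lad$ and weight $\mu$. Your direct inductive argument that every weight-$\mu$ multitableau is $\mud$-cascade catabolizable (all the $1$'s pile into the first row of the node-$(r-1)$ tableau after the preparatory $\cat^{(i)}_0$ steps) is a sound, slightly more hands-on version of what the paper packages as Lemma \ref{L:ccat} and Lemma \ref{L:remove all at end}, which together give Lemma \ref{L:cat vacuous}.

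The one place where you diverge is the final ``charge alignment,'' which you flag as the main unresolved obstacle. This is not actually an obstacle in the paper's framework: the statistic $\charge(\Td)$ appearing in Conjecture \ref{CJ:tab} is the charge of the word of the multitableau (in the Borel case, the Lascoux--Sch\"utzenberger charge, which is exactly what $\charge_{\mu,(1,\dotsc,1)}$ specializes to by the remark following Proposition \ref{P:LR-charge}); it is not defined by a separate recursion along the cascade catabolism. You have invented a second, catabolism-based definition of charge and then worried about reconciling it with the LR charge, whereas once the two index sets are identified the summands agree term by term because the statistic being summed is literally the same function of $\Td$. So your proof is complete once you drop that imagined reconciliation step; no per-step tracking of insertions against cyclic rotation is needed.
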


In \S \ref{SS:catproof} we explain how Theorem
\ref{T:tab} follows from Theorem \ref{T:main}.

\begin{rem} \label{R:monomial for dom}
In the situation of Theorem \ref{T:tab} 
\begin{align}\label{E:monomial}
	t_{\hat{Q}_1}^{\lad-\mud} = \prod_{i=0}^{r-2} t_{i,i+1}^{|\la^{(0)}|+|\la^{(1)}|+\dotsm+|\la^{(i)}|}.
\end{align}
\end{rem}

The single-row rectangle special case of Corollary \ref{C:HLpleth} is:

\begin{cor} For $\mud=(\emptyset^{r-1},\mu)$
\begin{align}
	\qKred{\lad}{\mud}(t) = 
	\sum_{\la} c^\la_{\lad} K_{\la\mu}(t)
\end{align}	
where  $K_{\la\mu}(t)$ is the Kostka-Foulkes polynomial.
\end{cor}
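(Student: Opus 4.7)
The plan is to specialize Corollary~\ref{C:HLpleth} to the Borel single-row case. When $\eta_k = 1$ for all $k$, each rectangle $(\mu_k^{\eta_k})$ is the single row of length $\mu_k$, and the triple $(\mu,\eta=(1,\dotsc,1),i_1=0)$ produces exactly the Lusztig datum associated earlier in this section with $\mud=(\vn^{r-1},\mu)$. In particular $\qKred{\lad}{\mud}(t) = \qKred{\lad}{\mu,(1,\dotsc,1),0}(t)$.

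In this single-row Borel case, the one-node parabolic Hall-Littlewood function $\qHL{}{\mu,(1,\dotsc,1)}[X;t]$ reduces to Garsia's modified Hall-Littlewood symmetric function $Q'_\mu(X;t)$ (as recalled in the introduction), whose Schur expansion coefficients are by definition the Kostka-Foulkes polynomials:
\begin{align*}
\qHL{}{\mu,(1,\dotsc,1)}[X;t] = \sum_{\la\in\Y} K_{\la\mu}(t)\, s_\la[X].
\end{align*}

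Next I would invoke Corollary~\ref{C:HLpleth} and substitute to obtain
\begin{align*}
\qHL{}{\mu,(1,\dotsc,1),0}[\Xd;t_{Q_1}] = \sum_{\la\in\Y} K_{\la\mu}(t_{01}\dotsm t_{r-1,0})\, s_\la[Y].
\end{align*}
Applying the coproduct-based identity
\begin{align*}
s_\la[Y] = \sum_{\lad\in\Y^{Q_0}} t_{\hat{Q}_1}^{\lad-N\epsilon^{(r-1)}}\, c^\la_{\lad}\, s_\lad[\Xd]
\end{align*}
established inside the proof of Corollary~\ref{C:HLpleth}, and extracting the coefficient of $s_\lad[\Xd]$, I arrive at
\begin{align*}
\qK{\lad}{\mud}(t_{Q_1}) = t_{\hat{Q}_1}^{\lad-N\epsilon^{(r-1)}}\sum_{\la\in\Y} c^\la_{\lad}\, K_{\la\mu}(t_{01}\dotsm t_{r-1,0}).
\end{align*}

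Finally, since the Lusztig datum has all rectangular weights concentrated at node $r-1$ with total size $N=|\mu|$, the monomial $t_{\hat Q_1}^{\lad-\mu(\bullet)}$ appearing in the definition \eqref{E:redKS} of $\qKred{\lad}{\mud}$ coincides with $t_{\hat{Q}_1}^{\lad-N\epsilon^{(r-1)}}$; dividing by this common factor yields the claimed formula. The argument is essentially bookkeeping --- matching the multi-indexed conventions of $(\bi,\ba,\mu(\bullet))$ notation against the $(\mu,\eta,i_1)$ and $\mud$ notations --- and the only substantive input is Corollary~\ref{C:HLpleth}, so there is no serious obstacle beyond invoking it correctly.
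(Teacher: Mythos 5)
Your proposal is correct and follows exactly the route the paper intends: the paper presents this corollary as nothing more than the single-row (Borel) specialization of Corollary~\ref{C:HLpleth}, using that $\qHL{}{\mu,(1,\dotsc,1)}[X;t]=Q'_\mu(X;t)$ has Schur coefficients $K_{\la\mu}(t)$ and that $t_{\hat{Q}_1}^{\lad-\mu(\bullet)}=t_{\hat{Q}_1}^{\lad-N\epsilon^{(r-1)}}$ when the weights are concentrated at node $r-1$. Your bookkeeping matching $\mud=(\vn^{r-1},\mu)$ with the triple $(\mu,(1,\dotsc,1),i_1=0)$ is exactly the verification the paper leaves implicit.
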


\begin{rem}
In the special case that $\la^{(i)}$ is also empty for $i\in Q_0\setminus\{r-1\}$ so that $\lad= (\emptyset^{r-1},\la)$ for some $\la\in\Y_n$, we have $t_{\hat{Q}_1}^{\lad-\mud}=1$ and the reduced Kostka-Shoji polynomial is the usual Kostka-Foulkes polynomial $K_{\la,\mu}(t)$.
This is a theorem of Shoji \cite{Sho3}.
\end{rem}

\subsection{Proof of Theorem \ref{T:tab}}
\label{SS:catproof}

For a multitableau $\Td$ and positive integer $k$ define
the dimension vector
\begin{align}\label{E:mdef}
	m_k^\bullet(\Td) = \sum_{i\in Q_0} |T^{(i)}|_k \, \epsilon^{(i)}.
\end{align}
It remembers how many letters $k$ there are at the various vertices of a multitableau.

For dimension vectors $\dd,f^\bullet\in \Z^{Q_0}$ define $\dd\dom_{Q_0} f^\bullet$ if $\dd-f^\bullet \in \bigoplus_{i=0}^{r-2} \Z_{\ge0} \alpha^{(i)}$.

\begin{lem} \label{L:ccat} Let $|\Td|_1 = |\dd|$.
Then $\Td$ admits $\ccat_{\dd}$ if and only if $m_1^\bullet(\Td) \dom_{Q_0} \dd$. 
\end{lem}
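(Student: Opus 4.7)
The plan is to track $m_1^\bullet(\Td)$ through the successive applications in \eqref{E:big ccat} and show that the admissibility conditions at the individual steps are equivalent to the partial-sum inequalities characterizing $m_1^\bullet(\Td)\dom_{Q_0}\dd$.

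First, since each $T^{(i)}$ has partition shape, every $1$ in $T^{(i)}$ lies in row $1$; hence $|\row_1(T^{(i)})|_1 = |T^{(i)}|_1 = m_1^{(i)}(\Td)$, so admissibility of $\cat^{(i)}_{d^{(i)}}$ reduces to $m_1^{(i)} \ge d^{(i)}$. Next I observe how a single step changes $m_1^\bullet$: in $\cat^{(i)}_{p}$, the prefix $1^p$ of $\row_1(T^{(i)})$ is discarded, and the remainder $v$ of that row --- which contains precisely $m_1^{(i)}-p$ remaining $1$'s --- is column-inserted into $T^{(i+1)}$. Since $1$ is the smallest letter, these $1$'s necessarily end up in row $1$ of the new $T^{(i+1)}$. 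Thus after $\cat^{(i)}_{d^{(i)}}$ the counts become $m_1^{(i)}\mapsto 0$ and $m_1^{(i+1)}\mapsto m_1^{(i+1)}+m_1^{(i)}-d^{(i)}$, with the other $m_1^{(j)}$ unchanged.

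Unwinding the composition $\ccat_{\dd}=\cat^{(r-1)}_{d^{(r-1)}}\dotsm \cat^{(0)}_{d^{(0)}}$ inductively using the update rule above, I find that $\Td$ admits $\ccat_\dd$ precisely when
\begin{align*}
\sum_{j=0}^{i}\bigl(m_1^{(j)}(\Td)-d^{(j)}\bigr)\ge 0 \qquad \text{for each }i=0,1,\dotsc,r-1.
\end{align*}
The $i=r-1$ inequality is automatic (in fact equality) from the hypothesis $|\Td|_1=|\dd|$. The remaining inequalities for $0\le i\le r-2$ are exactly the condition that the coefficients $c_i$ in the expansion $m_1^\bullet(\Td)-\dd=\sum_{i=0}^{r-2}c_i\alpha^{(i)}$, with $\alpha^{(i)}=\epsilon^{(i)}-\epsilon^{(i+1)}$, are nonnegative: solving triangularly yields $c_i=\sum_{j=0}^{i}(m_1^{(j)}-d^{(j)})$. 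This is precisely the relation $m_1^\bullet(\Td)\dom_{Q_0}\dd$.

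There is no substantive obstacle --- the argument is a bookkeeping exercise once one confirms that column-inserting $1$'s into a partition-shape tableau only enlarges row $1$. The one edge case to check is $r=1$, where $\ccat_\dd=\cat^{(0)}_{d^{(0)}}$ and the definition is slightly different; there the admissibility $m_1^{(0)}(\Td)\ge d^{(0)}$ combined with $|\Td|_1=|\dd|$ forces $m_1^{(0)}(\Td)=d^{(0)}$, matching the (vacuous) dominance $m_1^\bullet(\Td)-\dd\in\bigoplus_{i=0}^{-1}\Z_{\ge 0}\alpha^{(i)}=\{0\}$.
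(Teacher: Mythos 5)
Your proof is correct and follows essentially the same route as the paper's: both arguments track the count of $1$'s at each node through the cascade, derive the update rule $m_1^{(i)}\mapsto 0$, $m_1^{(i+1)}\mapsto m_1^{(i+1)}+m_1^{(i)}-d^{(i)}$, and identify the resulting partial-sum inequalities $\sum_{j=0}^{i}(m_1^{(j)}-d^{(j)})\ge 0$ with the coefficients in the $\alpha^{(i)}$-expansion defining $\dom_{Q_0}$. The only cosmetic difference is that the paper first reduces to the case where $\Td$ consists solely of ones, whereas you handle the general tableau directly by observing that all $1$'s live in row $1$ and column insertion keeps them there.
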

\begin{proof} To check this it is enough to assume that $\Td$ consists of only ones. Let $T^{(i)}$ consist of $e^{(i)}$ ones so that
$m_1^\bullet(\Td)=e^\bullet$. 
Define $f\in \Z^{Q_0}$ by $f(i) = \sum_{j=0}^i (e^{(j)} - d^{(j)})$. By assumption, $f(r-1)=0$.

By induction on $i$, one proves that
$\cat_{d^{(i-1)}}^{(i-1)}\dotsm \cat_{d^{(0)}}^{(0)}(\Td)$ is defined
if and only if $f(j) \ge 0$ for $0\le j\le i-1$ and in that case,
the resulting multitableau consists of empty tableaux at nodes $0$ through $i-1$, $f(i-1)+e^{(i)}$ ones at node $i$, and $e^{(j)}$ ones at node $j$ for $i<j\le r-1$.
By induction the statement holds at $i=r$, in which case it says that $\Td$ admits $\ccat_{\dd}$ if and only if $e^\bullet \dom_{Q_0} d^\bullet$, in which case the
result of $\ccat_{\dd}(\Td)$ is the empty multitableau.
\end{proof}

\begin{lem} \label{L:remove all at end} Suppose $|\Td|_1=x$.
	\begin{enumerate}
		\item[(a)] $\Td$ admits $\ccat_{(0,\dotsc,0,x)}$.
		\item[(b)] Given $\dd\in\Z_{\ge0}^{Q_0}$ let $x=\sum_{i\in Q_0} d^{(i)}$. Suppose $\Td$ admits $\ccat_{\dd}$. Then $\Td$ admits $\ccat_{(0,\dotsc,0,x)}$ and
		\begin{align}
			\ccat_{(0,0,\dotsc,x)}(\Td) = 
			\ccat_{\dd}(\Td).
		\end{align}
	\end{enumerate}
\end{lem}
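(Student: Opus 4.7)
The plan is to apply Lemma \ref{L:ccat} to handle admissibility in (a) and use a single-step reduction argument to compare cascades in (b).

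For (a), let $x = |\Td|_1 = \sum_{i\in Q_0} m_1^{(i)}(\Td)$. Since $|(0,\dotsc,0,x)| = x = |\Td|_1$, Lemma \ref{L:ccat} applies, and it suffices to verify the dominance $m_1^\bullet(\Td) \dom_{Q_0} (0,\dotsc,0,x)$. The difference
\[
m_1^\bullet(\Td) - (0,\dotsc,0,x) = \sum_{i=0}^{r-2} m_1^{(i)}(\Td)(\epsilon^{(i)} - \epsilon^{(r-1)}) = \sum_{i=0}^{r-2} m_1^{(i)}(\Td) \sum_{j=i}^{r-2}\alpha^{(j)}
\]
lies in $\bigoplus_{j=0}^{r-2}\Z_{\ge 0}\alpha^{(j)}$, giving (a).

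For (b), the heart of the argument is a single-step reduction: if $\Td$ admits $\ccat_\dd$ and $d^{(i)} \ge 1$ for some $0 \le i \le r-2$, then setting $e^\bullet = \dd - \alpha^{(i)}$ (so $e^{(i)} = d^{(i)}-1$, $e^{(i+1)} = d^{(i+1)}+1$, others unchanged), $\Td$ admits $\ccat_{e^\bullet}$ and $\ccat_{e^\bullet}(\Td) = \ccat_\dd(\Td)$. To prove it, compare the two cascades step by step. They coincide through step $i-1$. At step $i$, the $e^\bullet$-cascade applies $\cat^{(i)}_{d^{(i)}-1}$ (admissible since $\cat^{(i)}_{d^{(i)}}$ is), stripping one fewer $1$ from $\row_1(T^{(i)})$ and inserting one extra $1$ at the start of the word going into $T^{(i+1)}$. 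The technical key is the row-insertion identity: for any word $u$, $P(1 \cdot u)$ is obtained from $P(u)$ by prepending a single $1$ to the first row; this follows because the first-row $1$'s form a contiguous left-block in any semistandard tableau, and inserting a letter $\ge 2$ into a row cannot affect this block. Consequently, after step $i$ the $e^\bullet$-cascade multitableau differs from that of the $\dd$-cascade only by one extra $1$ at the leftmost entry of $\row_1(T^{(i+1)})$.

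At step $i+1$, the $e^\bullet$-cascade's $\cat^{(i+1)}_{d^{(i+1)}+1}$ strips exactly one more $1$ than $\cat^{(i+1)}_{d^{(i+1)}}$ (admissibility being guaranteed by the extra $1$), absorbing the discrepancy: the post-stripping remnant of $\row_1(T^{(i+1)})$, the rows below it, and the word inserted into $T^{(i+2)}$ all coincide in the two cascades, which therefore realign after step $i+1$ and agree thereafter. Iterating this reduction—say, first transferring all of $d^{(0)}$ into $d^{(1)}$ one unit at a time, then the accumulated $d^{(1)}$ into $d^{(2)}$, and so on—transforms $\dd$ into $(0,\dotsc,0,x)$ while preserving both admissibility and the cascade output, proving (b). The main subtlety is in tracking the extra $1$ through the Schensted insertion, but this reduces to the contiguous left-block property of first-row ones.
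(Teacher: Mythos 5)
Your proof is correct and takes essentially the same route as the paper's: part (a) is the paper's argument (dominance check plus Lemma~\ref{L:ccat}), and part (b) rests on the same key observation that column-inserting surplus $1$'s merely prepends them to the first row of the target tableau, where they are absorbed when that row is later stripped. The only difference is organizational: the paper runs one induction showing the two partial cascades agree except for $d^{(0)}+\dotsm+d^{(i)}$ extra ones in the first row at node $i+1$, whereas you factor the comparison into elementary transfers $\dd\mapsto \dd-\alpha^{(i)}$; the bookkeeping is the same.
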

\begin{proof} For (a) define $d^{(i)} = |T^{(i)}|_1$ for $i\in Q_0$.
	Then $x=|\Td|_1=|\dd|$ and $\dd \dom_{Q_0} (0,\dotsc,0,x)$.
	By Lemma \ref{L:ccat}, $\Td$ admits $\ccat_{(0,\dotsc,0,x)}$.
	
	Now let $\Td$ and $\dd$ be as in (b). By (a) $\Td$ admits $\ccat_{(0,\dotsc,0,x)}$.
	For a fixed $i\in Q_0\setminus\{r-1\}$ let
	\begin{align*}
		D^\bullet &= \cat^{(i)}_0 \dotsm \cat^{(1)}_0 \cat^{(0)}_0(\Td)\\
		C^\bullet&=\cat^{(i)}_{d^{(i)}} \dotsm \cat^{(1)}_{d^{(1)}} \cat^{(0)}_{d^{(0)}}(\Td).
	\end{align*}
	One may show by induction on $i$ that
	\begin{align*}
		D^{(j)} &=\begin{cases}
			C^{(j)} &\text{for $j\ne i+1$} \\
			P(1^{d^{(0)}+d^{(1)}+\dotsm+d^{(i)}} C^{(i+1)})&\text{for $j=i+1$.}
		\end{cases}
	\end{align*}
	that is, $D^{(i+1)}$ is obtained from $C^{(i+1)}$
	by putting $d^{(0)}+d^{(1)}+\dotsm+d^{(i)}$ more ones in the first row.
	Now consider $i=r-2$.
	The final operators, $\cat^{(r-1)}_x$ and $\cat^{(r-1)}_{d^{(r-1)}}$
	of $\ccat_{(0,\dotsc,0,x)}$ and $\ccat_{\dd}$ respectively, remove the first rows of $D^{(r-1)}$ and $C^{(r-1)}$ respectively, leaving behind the remaining tableaux $\hat{D}^{(r-1)}$
	and $\hat{C}^{(r-1)}$ which now match,
	remove all ones from the first rows, making these row words equal,
	and then inserting this common row word into the equal $0$-th tableaux $C^{(0)}=D^{(0)}$, resulting in the same multitableau at the end.
\end{proof}

\begin{rem}\label{R:ccat n=1}
If $n=1$ then $\CT(\lad,\mud)\neq\emptyset$ if and only if
$\la_1^\bullet \dom_{Q_0} \mu_1^\bullet$. In that case $\CT(\lad,\mud)$ is a singleton, the unique multitableau of shape $\lad$ containing only ones.
\end{rem}

\begin{rem} Lemmas \ref{L:ccat} and \ref{L:remove all at end} can be combined to give an alternative condition to $\mud$-cascade catabolizability. $\Td$ is $\mud$-cascade-catabolizable if and only if $m_1^\bullet(\Td)\dom_{Q_0} \mu_1^\bullet$,
$m_2^\bullet(\ccat_{(0,\dotsc,0,|\mu_1^\bullet|)}(\Td)) \dom_{Q_0} \mu_2^\bullet$, etc.
\end{rem}

Let $T(\lad,\mu)$ be the set of multitableaux of shape $\lad$
and weight $\mu$.

\begin{lem} \label{L:cat vacuous}
For $\mud=(\emptyset^{r-1},\mu)$ we have
$\CT(\lad,\mud)=T(\lad,\mu)$.
\end{lem}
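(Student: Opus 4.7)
The plan is to verify the two inclusions separately. The inclusion $\CT(\lad,\mud)\subseteq T(\lad,\mu)$ is immediate from the definition of $\mud$-cascade catabolizability: the weight condition $|T^\bullet|_k = |\mu_k^\bullet|$ together with $|\mu_k^\bullet|=\mu_k$ (since $\mu_k^\bullet=(0,\dotsc,0,\mu_k)$ for each $k$) says exactly that $\Td$ has weight $\mu$, i.e., $\Td\in T(\lad,\mu)$.

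For the reverse inclusion, let $\Td\in T(\lad,\mu)$. The weight condition for $\mud$-cascade catabolizability is satisfied by assumption, so it remains only to show that $\Td$ admits the composition $\ccat_{\mu_n^\bullet}\dotsm\ccat_{\mu_1^\bullet}$. The crucial observation is that each $\mu_k^\bullet=(0,\dotsc,0,\mu_k)$ is concentrated at node $r-1$, so Lemma~\ref{L:remove all at end}(a) applies directly. Indeed, that lemma asserts that any multitableau $\Sd$ with $|\Sd|_1=x$ admits $\ccat_{(0,\dotsc,0,x)}$. Applied with $\Sd=\Td$ and $x=\mu_1$, this shows that $\Td$ admits $\ccat_{\mu_1^\bullet}$, and the resulting multitableau has letters $\{2,\dotsc,n\}$ and weight $(\mu_2,\dotsc,\mu_n)$. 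Iterating this for $k=2,3,\dotsc,n$ (with the current minimum letter playing the role of $1$) completes the verification.

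The only mild subtlety is that Lemma~\ref{L:remove all at end}(a) is phrased for the letter $1$, whereas the intermediate steps involve removing successively the letters $2,3,\dotsc,n$. However, the catabolism operators $\cat^{(i)}_p$ depend only on the multiplicity of the minimum letter in the first row of $T^{(i)}$, so the same lemma applies verbatim after relabeling the current minimum letter as $1$. No step presents any real obstacle; the result is a direct iteration of Lemma~\ref{L:remove all at end}(a).
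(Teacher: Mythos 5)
Your proof is correct and matches the paper's, which simply cites Lemma~\ref{L:remove all at end}: since each $\mu_k^\bullet=(0,\dotsc,0,\mu_k)$ is concentrated at node $r-1$, part (a) of that lemma shows every step of the cascading catabolism is admitted, and the weight condition identifies $\CT(\lad,\mud)$ with $T(\lad,\mu)$. Your handling of the relabeling of the minimal letter at each stage is consistent with the paper's convention, so nothing further is needed.
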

\begin{proof} This follows from Lemma \ref{L:remove all at end}.
\end{proof}

Since $\LR^{\lad}_{\mu,\eta,0}$ clearly also equals $T(\lad,\mu)$ in the Borel case, we see that Theorem~\ref{T:tab} is a consequence of Theorem~\ref{T:main} and Lemma~\ref{L:cat vacuous}.


\appendix

\section{Tableau constructions}\label{S:tableau}

\subsection{Knuth equivalence}
\label{SS:Knuth}

Knuth equivalence $\Knuth$ on words is the transitive closure of the following relations, where $u$ and $v$ are words and $x,y,z$ are values satisyfing
\begin{align*}
u xzy v &\equiv u zxy v &\qquad&\text{for $x\le y<z$} \\
u yxz v &\equiv u yzx v &\qquad&\text{for $x<y\le z$.}
\end{align*}

\begin{lem} \label{L:Knuthres}
Let $u$ and $v$ be words. If $u\equiv v$ then
for all intervals $I$ $u|_I\equiv v|_I$.
\end{lem}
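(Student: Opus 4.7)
The plan is to reduce to a single elementary Knuth move and then do a small case analysis, exploiting the fact that $I$ is an interval.

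Since $\equiv$ is the transitive closure of the two elementary relations, it suffices to show the following: if $u' = w_1 \cdot a_1a_2a_3 \cdot w_2$ and $u'' = w_1 \cdot b_1b_2b_3 \cdot w_2$ differ by one elementary Knuth move, then $u'|_I \equiv u''|_I$. Restriction distributes over concatenation, so I only need $(a_1a_2a_3)|_I \equiv (b_1b_2b_3)|_I$, independent of the surrounding context.

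The two elementary relations are $xzy \leftrightarrow zxy$ with $x \le y < z$, and $yxz \leftrightarrow yzx$ with $x < y \le z$. In each relation, the letter $y$ lies between $x$ and $z$ in the natural order. I would split into cases by how many of the three letters $\{x,y,z\}$ lie in $I$:

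\begin{itemize}
\item If all three lie in $I$, the restrictions are precisely the two sides of the same elementary Knuth move (the defining inequalities on $x,y,z$ are unchanged), so they are Knuth equivalent.
\item If none lies in $I$, both restrictions are empty.
\item If exactly one lies in $I$, the restrictions are the same single letter, hence equal as words.
\item If exactly two lie in $I$, I would use that $I$ is an interval. Since $x \le y \le z$ in both relations, the case where $x, z \in I$ but $y \notin I$ is impossible ($I$ would have to contain everything between its elements). In each of the remaining two-element cases (drop $x$ from below $I$, or drop $z$ from above $I$), the surviving two letters occur in the same order on both sides of the Knuth move, so the restrictions coincide as words.
\end{itemize}

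The main (and only) subtlety is the observation that the interval condition on $I$ rules out the configuration where the outer letters are kept but the middle letter is erased; every other reduction is either a re-application of the same Knuth move or a word equality. No further ideas are needed.
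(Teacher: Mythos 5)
Your proof is correct, and it is the standard argument for this fact (the paper states Lemma~\ref{L:Knuthres} without proof). The reduction to a single elementary Knuth move, followed by the case analysis on which of $x\le y\le z$ lie in $I$, is exactly right, and you correctly isolate the one dangerous configuration --- keeping $x$ and $z$ while erasing $y$, which would leave the non-equivalent words $xz$ and $zx$ --- and observe that the interval hypothesis excludes it; all remaining cases give either the same Knuth move again or literally equal restricted words.
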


The Ferrers diagram $D(\la)$ of a partition $\la\in\bY$ is the set
of matrix-style pairs $(i,j)\in \Z^2$ with $j\le \la_i$.
By a tableau we mean a semistandard tableau of some partition shape $\la$, a function $T:D(\la)\to \Z_{>0}$ which weakly increases along rows ($T(i,j)\le T(i,j+1)$ for $(i,j),(i,j+1)\in D(\la)$) and 
strictly increases down columns ($T(i,j)< T(i+1,j)$ for $(i,j),(i+1,j)\in D(\la)$).

The reading word (denoted $\word(T)$) of a tableau $T$ is the word obtained by reading the rows of $T$ from left to right, starting with the bottom row and proceeding to earlier rows. We regard a tableau $T$ as a word in this manner.

\begin{ex} 
	\begin{equation*}
	T = \tableau[sy]{1&1&2\\2&3}\qquad\word(T)=23112.
	\end{equation*}
\end{ex}

\begin{thm} \label{T:P tableau}
For every word $u$ (in symbols $1,2,\dotsc$) there is a unique tableau denoted $P(u)$, such that $u\equiv \word(P(u))$.
\end{thm}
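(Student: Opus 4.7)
The plan is to establish existence and uniqueness via Schensted row insertion, which is the standard approach.

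\textbf{Existence.} Define $P(u)$ inductively using row insertion: $P(\vn)$ is the empty tableau, and for a word $u' x$ with $x$ a single letter, set $P(u' x) = P(u') \leftarrow x$, where $\leftarrow$ denotes Schensted row insertion (place $x$ in the first row by bumping the leftmost entry strictly greater than $x$, if any, into the next row, and recurse). I would verify by induction on $|u|$ that $\word(P(u)) \equiv u$. The inductive step reduces to the key insertion-as-Knuth-move lemma: for any tableau $T$ and letter $x$,
\begin{align*}
\word(T \leftarrow x) \equiv \word(T) \cdot x.
\end{align*}
This is proved by induction on the number of rows of $T$, tracking the bumping path. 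Concretely, when $x$ is inserted into the top row and bumps out $y$, one shows that the word $\word(T) \cdot x$ can be transformed via the elementary Knuth moves (the two relations listed in \S\ref{SS:Knuth}) so as to ``pull" $y$ down to just before the second row from the bottom position where it will be inserted; the bumped letter is then handled by the inductive hypothesis applied to the lower rows.

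\textbf{Uniqueness.} The claim $P(\word(T)) = T$ for every tableau $T$ is proved by induction on the number of rows. Letting $T = \hat T \cdot \row_1(T)$ (where $\hat T$ denotes the rows of $T$ below the top), the inductive hypothesis gives $P(\word(\hat T)) = \hat T$; then inserting the entries of $\row_1(T)$ in left-to-right order into $\hat T$ reconstructs $T$ exactly, because each such entry lands at the end of the top row (by semistandardness and the bumping rules, no entry of $\row_1(T)$ bumps anything in $\hat T$). Combined with the fact that $P$ is constant on Knuth classes — i.e., if $u \equiv v$ then $P(u) = P(v)$, which needs only to be checked for the two elementary relations generating $\equiv$ — we conclude: if $T_1, T_2$ are tableaux with $\word(T_1) \equiv \word(T_2)$, then
\begin{align*}
T_1 = P(\word(T_1)) = P(\word(T_2)) = T_2.
\end{align*}
Thus the tableau whose reading word is Knuth equivalent to $u$ is unique if it exists, and existence is supplied by the first part.

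\textbf{Main obstacle.} The technical heart of the argument is in two closely related case analyses: (i) that row insertion of a single letter realizes a sequence of elementary Knuth moves, and (ii) that each elementary Knuth move on a word preserves the output of Schensted insertion. Both hinge on a careful bookkeeping of bumping behavior when the three letters $x, y, z$ involved in a Knuth relation meet the bumping path. The standard way to organize (ii) is to reduce to the case of a three-letter word entering a single-row tableau and verify the two relations directly; the general case then follows because elementary Knuth moves on the suffix of a word commute (in an appropriate sense) with prior insertions.
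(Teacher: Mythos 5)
Your overall architecture is the standard one, and it matches what the paper implicitly relies on (the paper gives no proof of this theorem, remarking only that $P(u)$ is computed by Schensted insertion and citing \cite{Sch}): existence via iterated row insertion plus the row bumping lemma $\word(T\leftarrow x)\equiv\word(T)\cdot x$, and uniqueness from the two facts that $P$ is constant on Knuth classes and that $P(\word(T))=T$. You correctly identify Knuth-invariance of $P$ as the technical heart, and your sketch of how to organize that reduction is reasonable.

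There is, however, a genuine error in your justification of $P(\word(T))=T$. With the paper's reading convention $\word(T)=\word(\hat T)\cdot\row_1(T)$ (bottom row first, top row $\row_1(T)$ last), it is false that each entry of $\row_1(T)$ ``lands at the end of the top row'' of $\hat T$ without bumping. Column-strictness gives $T(1,1)<T(2,1)$, and $T(2,1)$ is the \emph{first} entry of the first row of $\hat T$, so row-inserting $T(1,1)$ into $\hat T$ immediately bumps $T(2,1)$, which bumps $T(3,1)$, and so on down the column. Your claim is also internally inconsistent: if no bumping occurred, the result would have the same number of rows as $\hat T$ --- one fewer than $T$ --- and its first row would be $\row_2(T)$ followed by $\row_1(T)$, so it could not equal $T$. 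The correct bumping analysis is that inserting $T(1,j)$ displaces the $j$-th column of the current tableau downward by one box and places $T(1,j)$ on top of it (at each stage $T(i,j)$ bumps $T(i+1,j)$, by the inequalities $T(i,j-1)\le T(i,j)<T(i+1,j)$ holding in the partially rebuilt tableau), so that after all of $\row_1(T)$ is inserted one recovers $T$ exactly. With this local argument repaired, your proof is the standard correct one.
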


$P(u)$ can be computed by Schensted's row or column insertion algorithms \cite{Sch}.

\subsection{Column insertion and RSK}
\label{SS:column insertion}

For $\la,\nu\in\Y$ we say that $\nu/\la$ is a horizontal strip
if $D(\nu)\supset D(\la)$ and the set difference $D(\nu)-D(\la)$
(which we denote by $\nu/\la$) has at most one box in each column.

\begin{prop} \label{P:column insert row}\cite{Sch}
Given $p\in\Z_{\ge0}$ and $\la\in\Y$,
there is a bijection $\psi_\la$ sending $(u,T)$ to $P$
where $u$ is a row word with $|u|=p$,
$T$ is a tableau of shape $\la$,
and $P$ is a tableau of some shape $\nu$ such that
$\nu/\la$ is a horizontal strip of size $p$.
It is uniquely specified by the condition
$uT\equiv P$.
\end{prop}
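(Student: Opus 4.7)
The plan is to produce the bijection $\psi_\la$ explicitly via Schensted column insertion and to verify three things in turn: (i) the resulting tableau $P$ satisfies $uT\equiv P$, so that the uniqueness claim reduces to Theorem~\ref{T:P tableau}; (ii) the shape $\nu$ of $P$ contains $\la$ and $\nu/\la$ is a horizontal strip of size $p$; and (iii) the map has a two-sided inverse built from reverse column insertion.

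Concretely, given $(u,T)$ with $u=u_1u_2\dotsm u_p$ a weakly increasing row word and $T$ of shape $\la$, I would define $\psi_\la(u,T)=P$ by column-inserting the letters $u_p, u_{p-1},\dotsc,u_1$ in that order (i.e., $u$ read right-to-left) successively into $T$. Since column-inserting a word right-to-left into the empty tableau computes its $P$-tableau, and since right-to-left column insertion of the reading word of $T$ into the empty tableau recovers $T$ itself, iteration yields $P=P(uT)$. The Knuth equivalence $uT\equiv P$ and the uniqueness of $P$ subject to that condition then both follow from Theorem~\ref{T:P tableau}.

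The shape condition requires showing that the $p$ new boxes created by the successive column insertions form a horizontal strip. For this I would establish the column-insertion analogue of Schensted's bump lemma: if $x\ge y$ are column-inserted in this order into any tableau, then the bump path of $y$ lies weakly above and strictly to the right of the bump path of $x$; in particular, the new box produced by $y$ is strictly to the right of the new box produced by $x$. Applied to the weakly decreasing sequence $u_p\ge u_{p-1}\ge\dotsm\ge u_1$, this forces successive new cells to occupy strictly increasing columns, so the union $\nu/\la$ has $p$ cells with at most one per column, i.e.\ is a horizontal strip of size $p$.

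For bijectivity I would exhibit the inverse via reverse column insertion: given $P$ of shape $\nu$ with $\nu/\la$ a horizontal strip of size $p$, list the cells of $\nu/\la$ from right to left, and apply reverse column insertion at each in turn. Each step ejects a letter and shrinks the shape by one cell; after $p$ steps the remaining tableau has shape $\la$ (this is $T$), and the ejected letters $u_1,u_2,\dotsc,u_p$ form a weakly increasing row word $u$, where the monotonicity is the dual of the bump lemma above. By construction this inverts $\psi_\la$, completing the proof. The main obstacle is the column-insertion bump lemma in step (ii): one must carefully track how the cascading bumps from inserting $x$ interact with those from subsequently inserting a smaller $y$ to confirm the strict eastward progression of new cells, paralleling Schensted's original analysis for row insertion but with rows and columns dualized.
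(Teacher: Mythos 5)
Your proposal is correct and follows exactly the route the paper indicates: the forward map is $(u,T)\mapsto P(uT)$ computed by column-inserting $u$ right-to-left into $T$, with the horizontal-strip condition coming from the column-insertion bumping lemma, and the inverse given by reverse column insertion at the cells of $\nu/\la$ from right to left. The paper simply cites Schensted for these facts, and your sketch supplies the standard argument behind that citation.
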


The forward map is $(u,T)\mapsto P(uT)$. We denote the inverse map by $\psi_\la^{-1}(P)=(u,T)$. These may be computed directly using Schensted column insertion and its reverse \cite{Sch}.

Given a tableau $T$ let $T|_{\le k}$ be the subtableau of $T$
consisting of the entries of value at most $k$. Iterating the above Lemma yields the following bijection.

\begin{prop}\label{P:RSK} \cite{Sch}
There is a unique bijection (the column insertion Robinson-Schensted-Knuth correspondence)  $$(u_n,u_{n-1},\dotsc,u_1)\overset{\RSK}\mapsto (P,Q)$$
from sequences of $n$ row words to pairs of (semistandard) tableaux (with $Q$ on the alphabet $[n]=\{1,2,\dotsc,n\}$) such that for all $0\le k\le n$,
$\shape(Q|_{\le k}) = \shape(P(u_k \dotsm u_2 u_1))$. In particular $|Q|_k = |u_k|$ for all $1\le k\le n$.
\end{prop}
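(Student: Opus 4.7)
The plan is to proceed by induction on $n$, with the inductive step built directly on top of the single-horizontal-strip bijection $\psi_\la$ from Proposition~\ref{P:column insert row}. The base case $n=0$ sends the empty sequence to the pair of empty tableaux, and the shape condition holds vacuously. For the inductive step, suppose the statement is established for $n-1$ and let $(P',Q')$ be the pair associated to $(u_{n-1},\dotsc,u_1)$; by hypothesis $\shape(P')=\shape(Q')=\nu_{n-1}$ where $\nu_k:=\shape(P(u_k\dotsm u_1))$. Apply $\psi_{\nu_{n-1}}$ to the pair $(u_n,P')$ to obtain a tableau $P$ of shape $\nu_n$, characterized by $P\equiv u_n P'\equiv u_n u_{n-1}\dotsm u_1$, so $P=P(u_n\dotsm u_1)$. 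By Proposition~\ref{P:column insert row}, $\nu_n/\nu_{n-1}$ is a horizontal strip of size $|u_n|$.

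Next I would define $Q$ by extending $Q'$: fill the boxes of $\nu_n/\nu_{n-1}$ with the letter $n$. Semistandardness of $Q$ needs a brief verification: $Q'$ is semistandard by induction, the added cells carry $n$ which strictly exceeds all entries of $Q'$, so columns remain strictly increasing because the strip $\nu_n/\nu_{n-1}$ has at most one box per column, and rows remain weakly increasing because every $n$ sits to the right of entries $<n$. The shape condition is immediate: for $k<n$ we have $Q|_{\le k}=Q'|_{\le k}$, whose shape is $\nu_k$ by induction, and for $k=n$ we have $\shape(Q)=\nu_n=\shape(P)=\shape(P(u_n\dotsm u_1))$. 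The statement $|Q|_k=|u_k|$ then follows since $|Q|_k=|\nu_k|-|\nu_{k-1}|=|u_k|$ by the size of the $k$-th strip.

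For uniqueness (and the inverse bijection), I would run the construction in reverse. Given any pair $(P,Q)$ of the specified type, the shape condition forces $\nu_{n-1}=\shape(Q|_{\le n-1})$; the boxes of $Q$ labeled $n$ then form the horizontal strip $\shape(P)/\nu_{n-1}$, so $\psi_{\nu_{n-1}}^{-1}(P)$ returns a unique pair $(u_n,P')$ with $|u_n|$ equal to the size of that strip and $P'$ of shape $\nu_{n-1}$. Set $Q'=Q|_{\le n-1}$; this pair $(P',Q')$ satisfies the shape condition for the sequence $(u_{n-1},\dotsc,u_1)$ truncated to the first $n-1$ letters, and the remaining letters $u_{n-1},\dotsc,u_1$ are determined uniquely by the inductive hypothesis.

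The proof involves no real obstacle beyond bookkeeping; the only point requiring care is the semistandardness of $Q$ when the horizontal strip of $n$'s is appended, and that is handled by the horizontal-strip property coming from Proposition~\ref{P:column insert row}. All nontrivial combinatorial content—the existence and uniqueness of $P(uT)$ as a horizontal-strip extension of $T$—has already been packaged into $\psi_\la$, so the iterated correspondence is obtained by a clean telescoping induction.
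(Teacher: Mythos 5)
Your proof is correct and follows exactly the route the paper indicates (the paper simply says the bijection is obtained by "iterating" Proposition~\ref{P:column insert row}); your induction on $n$ with $\psi_{\nu_{n-1}}$ providing the inductive step, plus the check that appending a horizontal strip of $n$'s keeps $Q$ semistandard, is precisely that iteration spelled out. No further comment is needed.
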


We write $\uu\mapsto (P(\uu),Q(\uu))$ for this bijection.

More generally suppose $T$ is a semistandard tableau of partition shape and $\uu=(u_n,\dotsc,u_1)$ is a sequence of row words.
Define $\Psi(\uu,T)=(P,U)$ where $P=P(u_n\dotsm u_1 T)$
and $U$ is the semistandard skew tableau 
defined by the sequence of partitions
$\shape(P(u_k\dotsm u_2 u_1 T))$ for $0\le k\le n$.
$\Psi$ defines a bijection between pairs $(\uu,T)$
and $(P,Q)$ where $P$ is a semistandard tableau of partition shape
and $Q$ is a semistandard skew tableau with $\shape(Q) = \shape(P)/\shape(T)$ such that $\wt(Q) = (|u_1|,|u_2|,\dotsc,|u_n|)$.

The following is a reformulation of a theorem of D. White \cite{Wh}.

Let $y_\la$ be the Yamanouchi tableau of shape $\la$,
the unique tableau of shape and weight $\la$.
For partitions $\mu\subset\la\in\Y_n$ say that the tableau
$Q$ is $\la/\mu$-compatible if $y_\la \equiv Q y_\mu$.

\begin{thm} \label{T:White}
Let $\mu,\la\in\Y_n$ with $\mu\subset \la$
and let $\uu=u_n\dotsm u_1$ be a sequence of row words with
$|u_i| = \la_i-\mu_i$. Then $\uu$ is the sequence of rows of a semistandard tableau of shape $\la/\mu$ if and only if, for any
tableau $T$ of partition shape, if $\Psi(\uu,T)=(P,Q)$ then
$Q$ is $\la/\mu$-compatible.
\end{thm}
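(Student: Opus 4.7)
The plan is to prove both directions by induction on $n$, using the column insertion structure of $\Psi$ (Proposition~\ref{P:column insert row}) together with Knuth equivalence (Theorem~\ref{T:P tableau}).

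\emph{Reduction.} I would first show that the $\la/\mu$-compatibility of $Q$ is independent of $T$: for any two tableaux $T, T'$ of partition shape, the $Q$-tableaux from $\Psi(\uu,T)$ and $\Psi(\uu,T')$ satisfy $Q y_\mu \equiv Q' y_\mu$. This can be argued by showing that any change in $\shape(T)$ by a single cell alters $Q$ by a controlled local modification whose net effect on $Q y_\mu$ amounts to a Knuth move. With this, I may restrict to $T = \vn$, in which case $Q$ has straight shape $\shape(P(\uu))$.

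\emph{Forward direction.} Let $R$ be a semistandard skew tableau of shape $\la/\mu$ with reading word $\uu$. Induct on $n$: stripping the bottom row $u_n$ of $R$ yields a skew semistandard tableau of shape $\la''/\mu''$ with $\la''=(\la_1,\dotsc,\la_{n-1})$, $\mu''=(\mu_1,\dotsc,\mu_{n-1})$. By induction, $y_{\la''} \equiv Q'' y_{\mu''}$. Column inserting $u_n$ into $P(u_{n-1}\dotsm u_1)$ adds a horizontal strip labelled $n$ to $Q''$ to produce $Q$. Using the factorizations $y_\la = n^{\la_n} y_{\la''}$ and $y_\mu = n^{\mu_n} y_{\mu''}$ at the word level, the desired equivalence $y_\la \equiv Q y_\mu$ follows from a Knuth-move computation that accounts for the exact placement of the $n$-labeled cells in $Q$, which is determined by the structure of $u_n$ and $P(u_{n-1}\dotsm u_1)$ and the column-strictness of $R$.

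\emph{Backward direction.} Given $Q$ with $y_\la \equiv Q y_\mu$, recover $\uu$ by $\Psi^{-1}$. The weight of $Q$ forces $|u_k|=\la_k-\mu_k$. For column-strictness of the $u_k$ placed as rows of $\la/\mu$, argue by contradiction: a violation at adjacent rows $k, k+1$ would force $P(Q y_\mu)$ to differ from $y_\la$ in a specific way, contradicting the compatibility condition. A concrete implementation would use the crystal operator formalism (cf.~\S\ref{SS:crystal}) or an explicit shape-comparison lemma.

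The main obstacle is that both the forward direction's Knuth manipulation and the backward direction's contradiction argument hinge on understanding exactly where the $n$-labeled strip in $Q$ lies in relation to the bottom row of $R$; this is nontrivial because rectification of $R$ in general scrambles its row structure, so the shape of $P(\uu)$ need not equal $\la$. A rigorous argument would likely proceed by tracking detailed combinatorial invariants throughout the column insertion process, rather than by pointwise tableau identities.
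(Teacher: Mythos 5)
The paper does not actually prove Theorem~\ref{T:White}: it is stated as a reformulation of a theorem of D.~White and cited to \cite{Wh}, so there is no in-paper argument to compare yours against. Judged on its own terms, your proposal is a strategy outline rather than a proof, and the essential content of the theorem is precisely what remains unproven. First, the reduction to $T=\vn$ is asserted but not established: changing $\shape(T)$ by one cell changes the entire column-insertion history, so it is not clear that $Q$ changes by ``a controlled local modification,'' and you give no argument. The standard way to get $T$-independence is to show that the skew recording tableau of $\Psi(\uu,T)$ rectifies under jeu de taquin to the straight-shape recording tableau $Q(\uu)$ (so that $\word(Q)$, hence $Q\,y_\mu$, stays in one Knuth class); that commutation statement is itself a nontrivial theorem that must be proved or cited. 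Second, in the forward induction the step you must carry out is: given $y_{\la''}\equiv Q''y_{\mu''}$ and that $Q$ is $Q''$ with a horizontal strip of $n$'s adjoined, deduce $n^{\la_n}y_{\la''}\equiv Q\,n^{\mu_n}y_{\mu''}$. This hinges entirely on where the $n$-strip lands, which is governed by how the bottom row $u_n$ of $R$ column-inserts into $P(u_{n-1}\dotsm u_1)$ --- a tableau whose shape bears no simple row-by-row relation to $\la''/\mu''$. You explicitly flag this as ``the main obstacle'' and do not resolve it; but this interaction between the row structure of $R$ and the recording tableau of column insertion \emph{is} White's theorem, so deferring it leaves the proof empty at its core. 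The backward direction's ``contradiction'' is likewise unspecified (what exactly goes wrong in $P(Qy_\mu)$ when column-strictness fails between rows $k$ and $k+1$ is the whole point).

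If you want to complete this, the workable skeleton is: (i) prove (or cite) that crystal operators and jeu de taquin preserve recording tableaux, giving the $T$-independence; (ii) for $T=\vn$, prove the single-row insertion lemma in the precise form: if $v$ is a row word and $S$ a tableau such that $vS$ is the reading word of a semistandard skew tableau with $v$ its bottom row, then the cells added to $\shape(S)$ by column-inserting $v$ lie weakly to the left, column by column, of the cells of $v$ in the skew shape --- this is the quantitative statement that pins down the $n$-strip and makes your Knuth computation go through. Without some such lemma the induction does not close.
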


\begin{cor} \label{C:White} For any $\nu\in\Y$, $c^\la_{\mu,\nu}=\pair{s_\nu}{s_{\la/\mu}}$ is the number of
	$\la/\mu$-compatible tableaux of shape $\nu$.
\end{cor}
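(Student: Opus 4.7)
The plan is to derive the identity by specializing Theorem~\ref{T:White} to the case $T=\vn$ and comparing generating functions, using the characterization of skew Schur functions as a sum of monomials over SSYT.

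First I would take $T=\vn$ in the bijection $\Psi$, which in that degenerate case is just the column-insertion RSK of Proposition~\ref{P:RSK}: a sequence of row words $\uu=(u_n,\dotsc,u_1)$ corresponds to a pair $(P,Q)$ of semistandard tableaux of the same (straight) partition shape with $|Q|_k=|u_k|$. Theorem~\ref{T:White} with $T=\vn$ says that, restricted to sequences with $|u_i|=\la_i-\mu_i$, the sequences $\uu$ that are actually the rows of a SSYT of shape $\la/\mu$ are exactly those for which the corresponding $Q$ is $\la/\mu$-compatible. So $\Psi(-,\vn)$ restricts to a bijection
\begin{align*}
\{\text{SSYT of shape $\la/\mu$}\} \;\longleftrightarrow\; \bigsqcup_{\nu\in\Y}\{P:\shape(P)=\nu\}\times\{Q:\la/\mu\text{-compatible},\ \shape(Q)=\nu\}.
\end{align*}

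Next I would track the monomial weights. Since Knuth equivalence preserves content and $\uu\equiv P(\uu)$, we have $\wt(\uu)=\wt(P(\uu))$. Summing $x^{\wt(\uu)}$ over all SSYT of shape $\la/\mu$ is by definition $s_{\la/\mu}(x)$; under the bijection above this becomes
\begin{align*}
s_{\la/\mu}(x) \;=\; \sum_{\nu}\ \Bigl(\#\{Q:\la/\mu\text{-compatible},\ \shape(Q)=\nu\}\Bigr)\cdot\sum_{\shape(P)=\nu} x^{\wt(P)} \;=\; \sum_{\nu} N_\nu\, s_\nu(x),
\end{align*}
where $N_\nu$ is the number of $\la/\mu$-compatible tableaux of shape $\nu$.

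Finally, comparing this with the standard expansion $s_{\la/\mu}=\sum_\nu c^\la_{\mu,\nu}\,s_\nu$ (equivalently, $c^\la_{\mu,\nu}=\pair{s_\nu}{s_{\la/\mu}}$) and invoking linear independence of Schur functions yields $c^\la_{\mu,\nu}=N_\nu$, which is exactly the claim. No step is genuinely hard here; the only thing that requires care is checking that the shape of $P$ in $\Psi(\uu,\vn)$ can indeed be any partition $\nu$ compatible with some $Q$, which is immediate from the surjectivity built into the $\Psi$ bijection restricted by Theorem~\ref{T:White}.
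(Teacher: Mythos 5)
Your argument is correct and is essentially the derivation the paper intends: the corollary is left as an immediate consequence of Theorem~\ref{T:White}, and your specialization to $T=\vn$ (so that $\Psi$ becomes column-insertion RSK), combined with weight preservation under Knuth equivalence and comparison of Schur expansions of $s_{\la/\mu}$, is the standard way to make that precise. The only reading you implicitly adopt---that compatibility of $Q$ for the single choice $T=\vn$ already characterizes when $\uu$ is the row sequence of a tableau of shape $\la/\mu$---is the same reading the paper itself uses (e.g.\ in Proposition~\ref{P:QYam}), so there is no gap.
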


\begin{cor} \label{C:RW} \cite{RW}
For a sequence of partitions $\la^{(0)},\la^{(1)},\dotsc,\la^{(r-1)}$,
the product of Schur functions $s_{\la^{(0)}}\dotsm s_{\la^{(r-1)}}$ is a skew Schur function, associated with the skew shape
$D=\la^{(r-1)}*\dotsm*\la^{(0)}$
obtained by placing the partitions $\la^{(0)}$ up to $\la^{(r-1)}$ from northeast to southwest.
Thus for $\la\in\Y$, $c^\la_{\lad} = \pair{s_\la}{s_{\la^{(0)}}\dotsm s_{\la^{(r-1)}}}$ is equal to the number of 
$D$-compatible tableaux of shape $\la$.
\end{cor}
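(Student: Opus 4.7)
The plan is to deduce this from Corollary \ref{C:White} in two steps: first identify the product of Schur functions with a single skew Schur function, then apply the compatibility count.

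\textbf{Step 1.} I would first write $D=\alpha/\beta$ as an honest skew shape for some partitions $\beta\subset\alpha\in\Y$ and check that, by construction of the NE-to-SW placement, each $\la^{(i+1)}$ sits strictly below and strictly to the left of $\la^{(i)}$, so that no two components share a row or column in $D$. It then follows immediately that a semistandard filling of $D$ decomposes uniquely into independent semistandard fillings of the $\la^{(i)}$, because all row and column inequalities of semistandardness stay inside a single component. Summing over fillings by weight yields
\begin{align*}
s_{\alpha/\beta}=s_D=\prod_{i=0}^{r-1} s_{\la^{(i)}}.
\end{align*}

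\textbf{Step 2.} Next I would combine the standard adjunction $\pair{s_\la}{s_{\alpha/\beta}}=c^\alpha_{\beta,\la}$ with Step 1 to rewrite
\begin{align*}
c^\la_{\lad}=\pair{s_\la}{\prod_{i=0}^{r-1} s_{\la^{(i)}}}=c^\alpha_{\beta,\la}.
\end{align*}
Applying Corollary \ref{C:White} with $(\la,\mu,\nu)\leftarrow(\alpha,\beta,\la)$ then identifies $c^\alpha_{\beta,\la}$ with the number of $\alpha/\beta$-compatible, i.e., $D$-compatible, tableaux of shape $\la$, which is the assertion of the corollary.

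No substantial obstacle is expected. The only point requiring minor care is verifying that the NE-to-SW placement genuinely separates distinct components in both rows and columns, which is immediate from the construction; once that is checked, Step 1 is just the classical factorization of a skew Schur function on a disconnected skew shape, and Step 2 is a direct invocation of Corollary \ref{C:White}.
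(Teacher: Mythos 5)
Your argument is correct and is essentially the one the paper leaves implicit: the star shape $D$ has components sharing no rows or columns, so its skew Schur function factors as $\prod_i s_{\la^{(i)}}$, and then Corollary~\ref{C:White} (the Remmel--Whitney/White statement) applied to $D=\alpha/\beta$ gives the count of $D$-compatible tableaux of shape $\la$. No gap; this matches the paper's intended route via Theorem~\ref{T:White} and Corollary~\ref{C:White}.
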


\begin{ex} \label{X:star shape}
The skew shape $(2,1) * (2,2) * (3)$ is pictured below.
$$
\tableau[sty]{\fl&\fl&\fl&\fl&&&\\ \fl&\fl&&\\ \fl&\fl&&\\ & \\ \\}
$$
\end{ex}

\subsection{$A_{n-1}$ crystal graphs}
\label{SS:crystal} 

Words in $\{1,2,\dotsc,n\}$ of a fixed length,
form a type $A_{n-1}$ crystal graph. 
For such a word $u$, view each $i$ (resp. $i+1$) in $u$ as a right (resp. left) parenthesis. After matching parentheses,
the unpaired parentheses form a subword $i^\phi (i+1)^\epsilon$. Define $\vp_i(u)=\phi$ and $\ve_i(u)=\epsilon$.
If $\ve_i(u)>0$ then $e_i(u)$ is defined by replacing
the unpaired subword by $i^{\phi+1} (i+1)^{\epsilon-1}$. 
If $\ve_i(u)=0$ then $e_i(u)$ is undefined.
If $\vp_i(u)>0$ then $f_i(u)$ is defined by replacing
the unpaired subword by $i^{\phi-1} (i+1)^{\epsilon+1}$. 
If $\vp_i(u)=0$ then $f_i(u)$ is undefined.
The crystal reflection operator $s_i$ acts on a word $u$ by 
replacing the above unpaired subword by $i^{\epsilon} (i+1)^\phi$.
The crystal graph is the directed graph (with edges colored by $1\le i\le n-1$) having a directed arrow colored $i$ from each $u$ to $f_i(u)$. For a fixed $i$ the components of the graph are directed paths called $i$-strings. Along an $i$-string all the letters other than $i$ and $i+1$ and also all the $i$-paired letters, remain the same: only the substring of $i$-unpaired letters changes.
$\ve_i(u)$ (resp. $\vp_i(u)$) is the distance from $u$ to the beginning (resp. end) of its $i$-string. 
If reference to $i$ is needed we will say that letters are $i$-paired or $i$-unpaired and so on.

\begin{rem}\label{R:skew tableau preserving}
The set of (semistandard) tableaux of a fixed (partition) shape $\la\in\Y$ with entries in $1,2,\dotsc,n$ have a type $A_{n-1}$-crystal graph structure
induced by inclusion into the crystal graph of words by sending a tableau to its row-reading word:
the crystal operators $e_i$ and $f_i$ stabilize 
the set of \textit{tableau words}, those which are the reading words of tableaux. More generally the crystal operators preserve the set of tableaux of a fixed skew shape. The reason for these is that the crystal operators preserve the recording tableau $Q(u)$ of a word $u$ and the condition that a word $u$ be the reading word of a tableau of a fixed skew shape, is equivalent to saying that $Q(u)$ belongs to a given set of tableaux depending only on the skew shape \cite{Wh}.
\end{rem}

\begin{lem} \label{L:skew tableau preserving}
Let $D_1,\dots, D_s$ be skew shapes and
$T_1,\dotsc,T_s$ be semistandard tableaux with $T_j$ of shape $D_j$. Let $s_i$ be a crystal reflection operator.
Let $s_i(T_1\dotsm T_s)=u_1\dotsm u_s$ where $|u_j|=|T_j|$ for all $j$. Then $u_j$ is the reading word of a semistandard tableau of shape $D_j$ for all $j$.
\end{lem}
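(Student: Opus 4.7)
The plan is to reduce everything to the basic crystal operators $e_i$ and $f_i$ and then invoke the shape-preservation stated in Remark~\ref{R:skew tableau preserving} together with the tensor product rule for $A_{n-1}$ crystals.

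First I would unravel $s_i$ into a composition of raising or lowering operators. By definition, if $\ve_i(w)=\epsilon$ and $\vp_i(w)=\phi$, then
\[
s_i(w) \;=\; \begin{cases} f_i^{\phi-\epsilon}(w) & \text{if } \phi\ge \epsilon,\\ e_i^{\epsilon-\phi}(w) & \text{if } \epsilon\ge \phi.\end{cases}
\]
So it suffices to prove the statement with $s_i$ replaced by a single application of $e_i$ or $f_i$ and then iterate; the factor-lengths $|T_j|$ are preserved at each step, so the claim that $|u_j|=|T_j|$ is automatic along the way.

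Next I would apply the tensor product rule. Writing $w=T_1T_2\cdots T_s$ and doing the usual bracket-matching of the $i$'s against the $(i{+}1)$'s, the unpaired letters are distributed among the factors $T_1,\ldots,T_s$ in some pattern; when $e_i$ (resp.\ $f_i$) acts, it changes exactly one unpaired $(i{+}1)$ into $i$ (resp.\ one $i$ into $i{+}1$) inside a single factor $T_k$, while all other factors $T_j$ ($j\ne k$) are fixed letter-for-letter. Thus after one application of $e_i$ or $f_i$, the new word factors as $T_1\cdots T_{k-1}T_k'T_{k+1}\cdots T_s$ with $|T_k'|=|T_k|$, and by Remark~\ref{R:skew tableau preserving} (crystal operators stabilize the set of semistandard tableaux of any fixed skew shape) $T_k'$ is again a semistandard tableau of shape $D_k$. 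Iterating this the required number of times to realize $s_i$ gives the statement.

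The only real point to verify carefully is the tensor product step: that $e_i$ and $f_i$ act on a concatenation by changing a single letter inside a single factor. This is the standard $A_{n-1}$ tensor rule applied inductively to $(T_1T_2\cdots T_{s-1})\otimes T_s$, so I do not expect any genuine obstacle. Overall the argument is a short bookkeeping exercise once the remark is in hand; the main work is simply to state clearly that the crystal operators respect the factorization into consecutive blocks of prescribed lengths.
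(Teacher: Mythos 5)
Your argument is correct, but it takes a different route from the paper. The paper's proof is a one-step application of Remark~\ref{R:skew tableau preserving}: the concatenation $T_1\dotsm T_s$ is itself the reading word of a single semistandard tableau of the skew shape $D_s*\dotsm*D_1$ (the blocks placed on disjoint diagonals from southwest to northeast, as in Example~\ref{X:star shape}), so $s_i$ of that word is again the reading word of a tableau of the same skew shape, which automatically factors into reading words of tableaux of shapes $D_1,\dotsc,D_s$ of the prescribed lengths. You instead write $s_i$ as $f_i^{\vp_i-\ve_i}$ or $e_i^{\ve_i-\vp_i}$ and invoke the tensor product rule to localize each elementary step to a single factor $T_k$, then apply the remark shape-by-shape. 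Both proofs rest on the same remark, but yours requires the additional (standard, and true) fact that $e_i$ and $f_i$ applied to a concatenation act as $e_i$ or $f_i$ on exactly one factor --- one should check that the letter changed in the full word is the leftmost unpaired $i{+}1$ (resp.\ rightmost unpaired $i$) \emph{of its own factor}, which follows because the letters of $T_k$ that are unpaired in the full word form a consecutive middle segment of the letters unpaired in $T_k$ alone. You flag this step but do not prove it; it is the only substantive point and is routine. The paper's route avoids the tensor rule entirely at the cost of introducing the $*$-shape; yours is more elementary in its ingredients but longer in its bookkeeping. Either is acceptable.
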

\begin{proof} This follows from Remark \ref{R:skew tableau preserving}
since the word $T_1\dotsm T_s$ is the reading word of a tableau of a skew shape $D_s*\dotsm D_2*D_1$ obtained by placing $D_1,D_2,\dotsc,D_s$ going from southwest to northeast on disjoint sets of northwest-southeast diagonals; see Example \ref{X:star shape}.
\end{proof}

\begin{rem} \label{R:undom weight admits e}
$\ve_i(u)>0$ if $|u|_{i+1}>|u|_i$ since some $i+1$ must be $i$-unpaired.
\end{rem}

\begin{lem}\label{L:se involution}
$\ds_i=s_ie_i$ is an involution on the set of words $u$ such that
$\ve_i(u)>0$. It restricts to an involution on the set of tableaux $T$ of a given shape with $\ve_i(T)>0$.
\end{lem}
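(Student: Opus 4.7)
The plan is to prove this lemma by direct computation, tracking how the crystal operators $e_i$ and $s_i$ act on the distinguished unpaired subword attached to a word $u$ by the $i$-bracketing procedure. Since the operators $e_i$ and $s_i$ alter only the $i$-unpaired letters (all other letters, and in particular all $i$-paired $i$'s and $(i{+}1)$'s, are left untouched in place), it suffices to verify the involution property on that subword.

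Concretely, suppose $\ve_i(u) = \epsilon > 0$ and $\vp_i(u) = \phi \ge 0$, so the $i$-unpaired subword of $u$ is $i^\phi (i{+}1)^\epsilon$. First I would apply $e_i$: by definition this changes the unpaired subword to $i^{\phi+1}(i{+}1)^{\epsilon-1}$, so $e_i(u)$ has $\ve_i = \epsilon - 1$ and $\vp_i = \phi + 1$. Then applying $s_i$ swaps the two exponents in this new unpaired subword, yielding $i^{\epsilon-1}(i{+}1)^{\phi+1}$. In particular $\ve_i(\ds_i u) = \phi + 1 \ge 1 > 0$, so $\ds_i$ is again defined on $\ds_i u$, showing that $\ds_i$ stabilizes the set $\{u : \ve_i(u)>0\}$. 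Applying $\ds_i$ once more: $e_i$ turns the unpaired subword $i^{\epsilon-1}(i{+}1)^{\phi+1}$ into $i^{\epsilon}(i{+}1)^{\phi}$, and $s_i$ then swaps this to $i^{\phi}(i{+}1)^{\epsilon}$, which is exactly the original unpaired subword of $u$. Since the paired letters were never disturbed, we conclude $\ds_i^2(u) = u$.

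For the restriction to tableaux of a fixed shape, I would invoke Remark~\ref{R:skew tableau preserving}: both $e_i$ and $s_i$ preserve the set of (semistandard) tableaux of any given shape, hence so does their composition $\ds_i = s_i e_i$. Together with the previous paragraph, this gives the second assertion.

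The proof is essentially bookkeeping; the only subtlety is making sure that after applying $e_i$ the re-bracketing gives the expected new unpaired subword (so that the subsequent $s_i$ can be computed). But this is immediate from the definitions, because $e_i$ modifies only the unpaired substring and leaves all paired $i,i{+}1$ positions in place, so no new pairings are created or destroyed. Thus no hard step arises; the main obstacle is just to present the computation cleanly in terms of $\ve_i$ and $\vp_i$.
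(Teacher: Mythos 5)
Your computation is correct and complete: tracking the unpaired subword through $e_i$ and then $s_i$ gives $i^{\phi}(i+1)^{\epsilon}\mapsto i^{\epsilon-1}(i+1)^{\phi+1}\mapsto i^{\phi}(i+1)^{\epsilon}$ after two applications, with the paired letters fixed throughout (the stability of the pairing under $e_i$ is exactly the $i$-string property stated in \S\ref{SS:crystal}), and Remark~\ref{R:skew tableau preserving} handles the restriction to tableaux since $s_i$ is a power of $e_i$ or $f_i$. The paper states this lemma without proof, treating it as immediate from the signature rule, and your argument is precisely the verification that is being taken for granted.
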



\subsection{Dual crystal graph structure on $n$-tuples of row words}
\label{SS:dualcrystal}
Consider the set of $n$-tuples of row words $\uu=(u_n,\dotsc,u_2,u_1)$.
It has a type $A_{n-1}$ crystal graph structure defined by acting on the column insertion RSK $Q$ tableau. That is, we define
\begin{align*}
\ve_i^*(\uu) &= \ve_i(Q(u)) \\
\vp_i^*(\uu) &=\vp_i(Q(u)). 
\end{align*}
For any $\uu$, $s_i^*(\uu)$ is defined by
\begin{align}
P(s_i^*(\uu)) &= P(\uu) \\
Q(s_i^*(\uu)) &= s_i(Q(\uu)).
\end{align}
If $\ve_i^*(\uu)>0$ then $e_i^*(\uu)$ is defined by
\begin{align}
P(e_i^*(\uu)) &= P(\uu) \\
Q(e_i^*(\uu)) &= e_i(Q(\uu)).
\end{align}

Given row words $v$ and $u$ define $\ov(v,u)$ to be the
maximum number of columns $c$ such that $v'u'$ is the word
of a tableau of shape $(c,c)$ where $v'$ (resp. $u'$) is the subword of the last $c$ letters of $v$ (resp. first $c$ letters of $u$).

Say that a word $u$ in the alphabet $\{1,2,\dotsc,n\}$ is Yamanouchi (resp. almost Yamanouchi) if $\ve_i(u)=0$ for all $1\le i\le n-1$ (resp. $2\le i\le n-1$.)

\begin{prop} \label{P:QYam}
Let $\uu=u_n\dotsm u_1$ be a sequence of row words.
Then the number of $i$-pairs in $Q(\uu)$ is equal to 
$\ov(u_{i+1},u_i)$. In particular, $\uu$ is a tableau word if and only if $Q(\uu)$ is Yamanouchi, and $u_n\dotsm u_2$ is a tableau word if and only if $Q(\uu)$ is almost Yamanouchi.
\end{prop}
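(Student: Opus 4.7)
The plan is to establish the main quantitative claim that $\#(i\text{-pairs in }Q(\uu))=\ov(u_{i+1},u_i)$; the two ``in particular'' statements then follow immediately. Since $|Q(\uu)|_{i+1}=|u_{i+1}|$ and the number of $i$-pairs equals $|Q(\uu)|_{i+1}-\ve_i(Q(\uu))$, the main claim is equivalent to
\[
\ve_i(Q(\uu))=|u_{i+1}|-\ov(u_{i+1},u_i),
\]
which I would prove by reducing to a direct computation in the two-component case via the dual crystal structure of \S\ref{SS:dualcrystal}.

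By the definition of the dual crystal, $\ve_i(Q(\uu))=\ve_i^*(\uu)$. The key reduction is that Proposition~\ref{P:dual crystal jeu} realizes the dual crystal operator $e_i^*$ as a jeu-de-taquin acting on the pair $(u_i,u_{i+1})$ alone, leaving the other components fixed. Consequently $\ve_i^*(\uu)$ depends only on the pair $(u_{i+1},u_i)$, and equals $\ve_1^*$ applied to the two-component sequence $\uu'=(u_{i+1},u_i)$. I expect this locality step to be the main obstacle, as it rests on the jeu-de-taquin analysis packaged in Proposition~\ref{P:dual crystal jeu}.

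For the two-component base case $\uu'=(u,v)=(u_{i+1},u_i)$, the tableau $P(uv)$ has at most two rows (a letter bumped into the empty row~$2$ cannot bump further, since successive bumps produce a weakly increasing sequence appended to row~$2$); write its shape as $(p,q)$. The recording tableau $Q(\uu')$ has $1$'s filling the subshape $\shape(P(v))=(|v|)$---the first $|v|$ cells of row~$1$---and $2$'s elsewhere: in the remaining $p-|v|$ cells of row~$1$ and all $q$ cells of row~$2$. Its reading word $2^q\,1^{|v|}\,2^{p-|v|}$ produces $\min(q,|v|)=q$ matched pairs under the bracket convention $1\mapsto\,)$, $2\mapsto\,($, using that $q\le\min(|u|,|v|)$. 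The final ingredient is the classical identity $q=\ov(u,v)$ for row words, which follows from Schensted's theorem: both sides equal $|u|+|v|$ minus the length of the longest weakly increasing subsequence of $uv$. Combining these steps yields the main claim. The ``in particular'' statements then follow, since $Q(\uu)$ is Yamanouchi iff $\ov(u_{i+1},u_i)=|u_{i+1}|$ for every $i$, which forces each $u_{i+1}$ to fit column-strict entirely below the first $|u_{i+1}|$ columns of $u_i$---precisely the condition that the rows $u_1,\dotsc,u_n$ (top to bottom) form a semistandard tableau whose reading word is $\uu$. The almost-Yamanouchi case is the analogous statement restricted to $i\ge 2$, giving that $(u_2,\dotsc,u_n)$ is a tableau.
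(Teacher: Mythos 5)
Your overall strategy --- rewrite the pair count as $|u_{i+1}|-\ve_i(Q(\uu))$, reduce to the two-component case, and finish with a Greene/Schensted computation --- is genuinely different from the paper's proof, which simply invokes Theorem~\ref{T:White}. Your two-component computation is correct: $P(u_{i+1}u_i)$ has at most two rows, the recording tableau has reading word $2^q1^{|u_i|}2^{p-|u_i|}$ with $q\le\min(|u_i|,|u_{i+1}|)$, hence exactly $q$ pairs, and $q=\ov(u_{i+1},u_i)$ because both equal $|u_i|+|u_{i+1}|$ minus the longest weakly increasing subsequence of $u_{i+1}u_i$ (though this last identity for $\ov$ deserves its own short argument rather than being attributed to Schensted's theorem outright). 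The passage from the pair-count formula to the two ``in particular'' statements is also fine.

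The gap is the reduction step, which you rightly flag as the main obstacle but do not close. Proposition~\ref{P:dual crystal jeu} only describes what $s_i^*e_i^*$ does \emph{when it is defined}; it says nothing about $\ve_i^*(\uu)$, i.e., about whether and how many times $e_i^*$ can be applied. By definition $\ve_i^*(\uu)=\ve_i(Q(\uu))$, and this is a priori a global quantity: the letters $i$ and $i+1$ of $Q(\uu)$ occupy the skew shape $\shape(P(u_{i+1}\dotsm u_1))/\shape(P(u_{i-1}\dotsm u_1))$, which depends on all of $u_1,\dotsc,u_{i+1}$ and not just on the pair $(u_{i+1},u_i)$. What you actually need is that $\ve_1$ of the recording tableau of $\Psi((u_{i+1},u_i),T)$ is independent of the tableau $T=P(u_{i-1}\dotsm u_1)$ into which the two rows are inserted; that independence is precisely the content of Theorem~\ref{T:White} (note its ``for any tableau $T$'' quantifier), which is what the paper's one-line proof invokes. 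So either cite Theorem~\ref{T:White} for two rows to justify the reduction (at which point your argument essentially collapses into the paper's), or supply an independent proof of the locality of $\ve_i^*$ --- but the latter cannot be read off from Proposition~\ref{P:dual crystal jeu} as stated, and deriving it from the dual crystal structure risks circularity, since that structure and its jeu-de-taquin description sit downstream of exactly the properties of the $(P,Q)$ correspondence at issue here.
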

\begin{proof} Follows from Theorem \ref{T:White}.
\end{proof}

\begin{prop} \label{P:dual crystal jeu}
Let $\uu=u_n\dotsm u_1$ be an $n$-tuple of row words
and let $\uu'=u_n'\dotsm u_2' u_1'=s_i^*e_i^*(\uu)$.
\begin{enumerate}
\item We have $u'_j=u_j$ for $j\notin \{i,i+1\}$ and
$u'_{i+1}u'_i \equiv u_{i+1} u_i$. 
\item Say $u_{i+1}$ and $u_i$ have lengths $b$ and $a$ respectively.
Then $(u'_{i+1},u'_i)$ is the unique pair of row words such that $u'_{i+1}u'_i\equiv u_{i+1}u_i$ such that $u'_{i+1}$ has length $a+1$
and $u'_i$ has length $b-1$.
\end{enumerate}
\end{prop}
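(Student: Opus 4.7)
The plan is to exploit the defining properties $P(\uu')=P(\uu)$ and $Q(\uu')=s_ie_iQ(\uu)$ together with the bijectivity of the column insertion $\RSK$ correspondence (Proposition~\ref{P:RSK}). I would first prove part~(2) by a direct application of the two-tuple case of Proposition~\ref{P:RSK}, then construct an explicit candidate $\tilde\uu$ for $\uu'$ and verify by $\RSK$ uniqueness that $\tilde\uu=\uu'$.

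For part~(2), the $n=2$ case of Proposition~\ref{P:RSK} sends a pair $(v,u)$ of row words bijectively to $(P(vu),Q)$, where $Q$ is a tableau of shape $\shape(P(vu))$ with the $|u|$ entries labelled $1$ filling the shape $\shape(P(u))$. If $u$ is a row word of length $b-1$, then $\shape(P(u))=(b-1)$ forces the $1$'s to occupy the first $b-1$ boxes of row one of $Q$; hence fixing $P(vu)=P(u_{i+1}u_i)$ together with $|v|=a+1$ and $|u|=b-1$ uniquely determines $Q$ and therefore $(v,u)$.

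For part~(1), let $(\tilde u_{i+1},\tilde u_i)$ be the unique pair supplied by part~(2) and define a candidate $\tilde\uu$ by $\tilde u_j=u_j$ for $j\notin\{i,i+1\}$. Since $\tilde u_{i+1}\tilde u_i\equiv u_{i+1}u_i$, concatenation yields $\tilde u_n\dotsm\tilde u_1\equiv u_n\dotsm u_1$, so $P(\tilde\uu)=P(\uu)$. To check $Q(\tilde\uu)=s_ie_iQ(\uu)$ I would, using Proposition~\ref{P:RSK}, verify the shape equalities $\shape(P(\tilde u_k\dotsm\tilde u_1))=\shape(s_ie_iQ(\uu)|_{\le k})$ for every $k$. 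For $k\ne i$ these follow from Knuth equivalence on the relevant factors and the observation that $s_ie_i$ preserves all entries of $Q(\uu)$ outside the labels $\{i,i+1\}$. The case $k=i$ asks that column-inserting $\tilde u_i$ into $P_{i-1}=P(u_{i-1}\dotsm u_1)$ adds precisely the horizontal strip of boxes labelled $i$ in $s_ie_iQ(\uu)$; this I would verify using a ``with background'' variant of the two-tuple $\RSK$ applied with background $P_{i-1}$, observing that the dual crystal action computed in that setting coincides with the backgroundless one of part~(2), by the right-cancellation property of Knuth equivalence. The new row lengths $|\tilde u_i|=b-1$ and $|\tilde u_{i+1}|=a+1$ are read off from the content change $|s_ie_iQ|_i=b-1$, $|s_ie_iQ|_{i+1}=a+1$.

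The principal obstacle is the shape verification at $k=i$. It hinges on two facts, both following from Knuth equivalence being a congruence in the plactic monoid: first, that a pair of row words with prescribed concatenation-Knuth-class and row lengths is uniquely determined whether or not a background tableau is appended on the right; and second, that the corresponding two-tuple dual crystal actions (with and without background) agree. Once these are in hand, the shape equality at $k=i$ follows directly from the with-background two-tuple $\RSK$ bijection applied to $(\tilde u_{i+1},\tilde u_i)$, and $\RSK$ uniqueness completes the proof.
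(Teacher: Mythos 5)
Your reduction is sound where it is routine: part (2) is a correct application of the two-tuple case of Proposition~\ref{P:RSK}, the shape checks at every $k\ne i$ go through exactly as you say, and you correctly isolate $k=i$ as the crux. But the argument you offer at $k=i$ rests on a false premise: the plactic monoid is \emph{not} right-cancellative. Indeed $132\equiv 312$ is an instance of the first Knuth relation (with $x=y=1$, $z=3$... more precisely $(x,y,z)=(1,2,3)$), yet $13\not\equiv 31$; so $vuT\equiv v'u'T$ does not imply $vu\equiv v'u'$. Consequently the first of your two supporting facts --- that a pair of row words with prescribed lengths is determined by the Knuth class of its concatenation with a background tableau appended on the right --- is also false. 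Concretely, let $T$ be the one-box tableau with entry $1$ and take $(v,u)=(1,2)$ and $(v',u')=(2,1)$: both pairs have lengths $(1,1)$ and $vuT=121\equiv 211=v'u'T$, so $P(vuT)=P(v'u'T)$, yet the two recording skew tableaux of shape $(2,1)/(1)$ differ (the cell $(1,2)$ receives a $2$ in the first case and a $1$ in the second). This is precisely the phenomenon that makes $k=i$ nontrivial: a semistandard skew $\{1,2\}$-tableau is not determined by its shape and weight, so the intermediate shape $\shape(P(\tilde u_i P_{i-1}))$ is not pinned down by the outer shapes and strip sizes, and your second supporting fact (agreement of the with-background and backgroundless dual crystal actions) is essentially the statement to be proved rather than a consequence of Knuth equivalence being a congruence.

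To close the gap you need a genuine argument that the recording skew tableau $Q(\uu)|_{\{i,i+1\}}$ transforms by $s_1e_1$ when $(u_{i+1},u_i)$ is replaced by the length-modified Knuth-equivalent pair. One workable route, suggested by Example~\ref{X:jeu}, is to show that the two-row jeu de taquin --- which moves a single cell between the rows $u_{i+1}$ and $u_i$ and preserves Knuth equivalence by \cite{LS} --- computes $e_i^*$ and $s_i^*$; another is to invoke the symmetry of RSK on biwords together with the commutation of crystal operators with jeu de taquin slides. (The paper states this proposition without proof, so there is no argument of record to compare against; but as written, your proof of the $k=i$ step does not go through.)
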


\begin{ex} \label{X:jeu} Let $\uu=(u_2,u_1)$ with $u_1=11334$ and $u_2 = 2234$.
The rows have sizes $(5,4)$.
We compute $s_1^*e_1^*(\uu)=\tuu=(\tu_2 ,\tu_1)$.
The new rows should have sizes $(3,6)$.
This is computed by the two-row skew tableau jeu-de-taquin, which is known to preserve Knuth equivalence \cite{LS}.
To move one number from the top row to the bottom row,
we put a hole after the end of the bottom row and swap it left and up
while preserving semistandardness.
The exchange path of the hole is highlighted.
$$
\tableau[scy]{\bl&\tf{1}&\tf{1}&\tf{3}&3&4\\2&2&3&\tf{4}&\tf{\bullet}}
\longrightarrow
\tableau[scy]{\bl&\bl&1&1&3&4\\2&2&3&3&4}
$$
We do it again:
$$
\tableau[scy]{\bl&\bl&\tf{1}&\tf{1}&\tf{3}&\tf{4}\\2&2&3&3&4&\bullet}
\longrightarrow
\tableau[scy]{\bl&\bl&\bl&1&1&3\\2&2&3&3&4&4}
$$
The result is $s_1^*e_1^*\uu$ with $\tu_2=223344$ and $\tu_1=113$.
\end{ex}

\end{document}